\newtheorem{theorem}{Theorem}[section]
\newtheorem{lemma}[theorem]{Lemma}
\newtheorem{corollary}[theorem]{Corollary}
\newtheorem{claim}{Claim}
\theoremstyle{definition}
\newtheorem{example}[theorem]{Example}
\newtheorem{step}{Step}
\theoremstyle{remark}
\newtheorem{remark}[theorem]{Remark}
\DeclareMathOperator{\diam}{diam}
\DeclareMathOperator{\Var}{\mathbf {Var}}
\DeclareMathOperator{\Li}{Li}
\newcommand{\R}{\mathbb R} 
\newcommand{\E}{\mathbb{E}} 
\newcommand{\Prob}{\mathbb{P}}
\newcommand\etal{et~al.~}
\newcommand\acks{\section*{Acknowledgements}}
\numberwithin{equation}{section}
\def\MR#1{\href{https://mathscinet.ams.org/mathscinet-getitem?mr=#1}{MR#1}}
\begin{document}

\title{The number of real zeros of elliptic polynomials}      
  
\author{Nhan D. V. Nguyen}

\address{Department of Mathematics, University of Colorado Boulder,  Boulder, CO 80309, USA \and Department of Mathematics and Statistics, Quy Nhon University, Binh Dinh, Vietnam}
\email{Nhan.Nguyen-1@colorado.edu, nguyenduvinhan@qnu.edu.vn}

\keywords{Random polynomials, real Gaussian analytic functions, real zeros, correlation functions, variance, cumulant, moment, asymptotic expansion, central limit theorem, strong law of large numbers}
\subjclass[2020]{60G15, 60G50, 60F05, 41A60}

\date{\today}
\begin{abstract} 
Let $N_n(a, b)$ denote the number of real zeros of Gaussian elliptic polynomials of degree $n$ on the interval $(a, b)$, where $a$ and $b$ may vary with $n$. We obtain a precise formula for the variance of $N_n(a, b)$ and utilize this expression to derive an asymptotic expansion for large values of $n$. Furthermore, we provide sharp estimates for the cumulants and central moments of $N_n(a, b)$. These estimates are instrumental in establishing sufficient conditions on the interval $(a, b)$ for $N_n(a, b)$ to satisfy both a central limit theorem and a strong law of large numbers. In the second part of the paper, we extend our analysis to nondegenerate Gaussian analytic functions, including well-known examples such as the Gaussian Weyl series and Weyl polynomials.
\end{abstract} 
\maketitle
\tableofcontents
\section{Introduction and main results} 
\subsection{Background} \label{S1.1}
Consider a positive integer $n$ and a nonempty interval $(a,b)\subset \mathbb R$, where $a$ and $b$ may depend on $n$. Let $p_0, p_1, \ldots, p_n$ be polynomials defined on $(a, b)$, and let $\omega_0, \omega_1, \ldots, \omega_n$ be jointly independent copies of a real random variale $\omega$ with zero mean and unit variance. The linear combination
\[P_n(x):=\sum_{j=0}^{n}\omega_j p_j(x)\]
is an example of a random polynomial. Various choices for the polynomials $p_j(x)$ give rise to distinct classes of random polynomials. Notable classes, of significant interest in probability theory and subjects of research attention in mathematical physics, include
\begin{enumerate}
    \item Kac polynomials (i.e., $p_j(x)=x^j$); and more generally, hyperbolic polynomials (i.e., $p_j(x)= \sqrt{\frac{L(L+1)\cdots(L+j-1)}{j!}}x^j$ for $L>0$);
    \item elliptic polynomials or binomial polynomials (i.e., $p_j(x)=\sqrt{\binom{n}{j}} x^j$);
    \item Weyl polynomials or flat polynomials (i.e., $p_j(x)=\frac{1}{\sqrt{j!}}x^j$);
    \item orthogonal polynomials (i.e., $p_j(x)$ form a system of orthonormal polynomials with respect to a fixed compactly supported measure); and 
    \item trigonometric polynomials (i.e., $p_j(x)$ are trigonometric polynomials). 
\end{enumerate}

Let $N_n(a,b)$ denote the number of real zeros of $P_n(x)$ inside $(a,b)$. Then,  $N_n(a,b)$ is a random variable taking values in $\{0,1,...,n\}$. A key problem in the theory of random polynomials is understanding the behavior of this random variable, with $n$ tending to infinity. During the past 90 years, most studies have been concerned with the estimation of $N_n(a,b)$, the expectation $\mathbb E[N_n(a,b)]$, the variance $\Var[N_n(a,b)]$, and the distribution of $N_n(a,b)$ in the large $n$ limit. These problems also naturally arise in different branches of physics because random polynomials serve as a basic model for eigenfunctions of chaotic quantum systems (see, for example, Bogomolny, Bohias, and Leb{\oe}uf  \cites{BBL, BBL1}).

Earlier investigations focused on Kac polynomials, with seminal contributions from Bloch and P\'{o}lya \cite{BP}, Kac \cites{K, K1},  Littlewood and Offord \cites{LO1, LO2, LO3}, and Erd{\H o}s and Offord \cite{EO}. Classical results, accompanied by numerous references on the subject, are available in the books by Bharucha-Reid and Sambandham \cite{BS} and Farahmand \cite{F}. We emphasize that when $\omega$ follows a normal distribution, the expected number of real zeros can be explicitly computed using the Kac-Rice formula (see \cite{K}, \cite{R}, or \cite{AW}*{Chapter 3}). Edelman and Kostlan \cite{EK} provided an elementary geometric derivation of the Kac-Rice formula, showing that $\mathbb E[N_n(a,b)]$ is simply the length of the moment curve $x\mapsto (p_0(x),...,p_n(x))$ for $x\in (a,b)$, projected onto the surface of the unit sphere, divided by $\pi$. In non-Gaussian scenarios, the universality method is crucial. It employs a replacement principle, enabling the comparison of correlation functions between two random functions when their log magnitudes closely match in distribution and meet specific non-concentration bounds (see, for example, Nguyen and Vu \cite{NV}, Tao and Vu \cite{TV}). Implementing this principle transforms the computation of zero distributions and interactions into a Gaussian framework. Currently, one can determine $\mathbb E[N_n(a,b)]$ for various classes of random polynomials, considering different choices for $p_j(x)$ and under very general assumptions for $\omega$ (see Do \cite{Do}, Do, H. Nguyen and Vu \cite{DHV}, Do, O. Nguyen and Vu \cites{DNV, DNV1}, Nguyen and Vu \cite{NV}, and the references provided therein).

Estimating the variance, however, has proved to be a much more difficult task and it is evident that this problem still awaits rigorous treatment. Despite a large number of prior studies, only a few are about $\Var[N_n(a,b)]$. For Kac polynomials, Maslova \cite{M} proved that if $\Prob(\{\omega=0\})=0$ and $\mathbb E[|\omega|^{2+\varepsilon}]<\infty$ for some  $\varepsilon>0$, then 
\[
\Var[N_n(\mathbb R)] =\frac{4}{\pi}\left(1-\frac{2}{\pi}\right)\log n+o(\log n)\quad \text{as}\quad n\to \infty.
\]
Beyond Kac polynomials, investigating the asymptotics of $\Var[N_n(a,b)]$ for other models of random polynomials has been extensively considered since the 1990s and has emerged as an active area of research in recent years. Utilizing the Kac-Rice formula and the universality method, the leading asymptotic terms for the variances of the real zeros were established for elliptic polynomials (see Bleher and Di \cite{BD}, Dalmao  \cite{D}), Weyl polynomials  (see Do and Vu \cite{DV}, Schehr and  Majumdar \cite{SM}),  orthogonal polynomials (see Lubinsky and  Pritsker \cite{LP}), and for trigonometric polynomials (see Bally, Caramellino, and Poly \cite{BCP}, Do, H. Nguyen and O. Nguyen \cite{DNN}, Granville and Wigman \cite{GW}). It is essential to note that most works focus on the case where $\omega$ is Gaussian and the second terms in the variance asymptotics for these random models remain unknown.

Establishing the limiting law of $N_n(a,b)$ presents a more intricate challenge. We say that $N_n(a,b)$ satisfies the central limit theorem (CLT) if the following convergence in distribution holds:
\[
    \frac{N_n(a,b)-\mathbb E[N_n(a,b)]}{\sqrt{\Var[N_n(a,b)]}}\xrightarrow{d} \mathcal N(0,1) \quad \text{as}\quad n\to \infty,
\]
where $\mathcal N(0,1)$ denotes the standard normal distribution. In 1974, Maslova \cite{M1} proved the CLT for Kac polynomials. Nearly four decades later, CLTs were extended to other classes of random polynomials. Granville and Wigman \cite{GW} and Aza\"{i}s and Le\'{o}n \cite{AzL} studied the CLT for Gaussian Qualls’ trigonometric polynomials using different methods. Aza\"{i}s, Dalmao, and Le\'{o}n \cite{ADL} extended this result to classical trigonometric polynomials. Dalmao \cite{D} achieved the same for elliptic polynomials, with Ancona and Letendre \cite{AL} recently generalizing this result using the method of moments. The primary tool employed in \cite{ADL}, \cite{AzL}, and \cite{D} is an $L^2$ expansion of the number of real zeros. CLTs for Weyl polynomials and Weyl series were obtained by Do and Vu \cite{DV} using the cumulant convergence theorem. In 2022, Nguyen and Vu \cite{NV1} established the CLT for random polynomials with coefficients of polynomial growth. Their proof adapted the universality method and the argument in Maslova \cite{M1}, approximating the number of zeros through a sum of independent random variables.

This paper is dedicated to establishing a full asymptotic expansion for the variance, along with sharp estimates for the cumulants and moments, of the number of real zeros of certain Gaussian processes. These findings are essential in comprehending the behavior and the limiting law of the real zeros. Our exploration begins with Gaussian elliptic polynomials,  which arise when considering the quantum mechanics of a spin $\rm S$ system whose modulus $S$ is conserved  (see Bogomolny, Bohias, and Leb{\oe}uf \cite{BBL}). These polynomials are highly relevant for applications in quantum chaos and have been extensively studied in the mathematical literature (see Ancona and Letendre \cite{AL}, Bleher and Di \cites{BD, BD1}, Dalmao \cite{D}, Edelman and Kostlan \cite{EK}, Flasche and Kabluchko \cite{FK}, Nguyen and Vu \cite{NV}, Schehr and  Majumdar \cite{SM}, Tao and Vu \cite{TV}). Additionally, our investigation extends to nondegenerate Gaussian analytic functions, leading to the establishment of new findings concerning the Gaussian Weyl series and Weyl polynomials. These random analytic functions are also objects of considerable interest in probability theory and mathematical physics (see Do and Vu \cite{DV}, Edelman and Kostlan \cite{EK}, Flasche and Kabluchko \cite{FK}, Nazarov and Sodin \cite{NS}, Nguyen and Vu  \cite{NV}, Schehr and  Majumdar \cite{SM}, Tao and Vu \cite{TV}).
\subsection{Real zeros of Gaussian elliptic polynomials} \label{S1.2}
Let $N_n(a,b)$ denote the number of real zeros on $(a,b)$ of the elliptic polynomial
\[
P_n(x)=\sum_{j=0}^n \omega_j \sqrt{\binom{n}{j}} x^j,
\]
where $\omega_j$ are i.i.d. normalized Gaussian random variables. 
In 1995, Edelman and Kostlan \cite{EK} showed that 
\begin{equation}
    \label{e1.1}
    \mathbb E[N_n(a,b)]=\frac{1}{\pi}\int_a^b \frac{\sqrt n}{1+x^2}dx=\frac{1}{\pi}\sqrt n(\arctan b-\arctan a).
\end{equation}
In 1997, Bleher and Di \cite{BD} determined the leading term in the large $n$ expansion of the variance $\Var[N_n(a,b)]$ for fixed $a$ and $b$. Specifically, defining
\[
    \delta_0(s)=\frac{e^{-s^2/2}(1-s^2-e^{-s^2})}{1-e^{-s^2}-s^2e^{-s^2}},\quad  \gamma_0(s)=\frac{1-e^{-s^2}-s^2e^{-s^2}}{(1-e^{-s^2})^{3/2}},
\]
and 
\begin{equation*}
f_0(s)=\left(\sqrt{1-\delta_0^2(s)}+\delta_0(s) \arcsin \delta_0(s)\right)\gamma_0(s)-1,
\end{equation*}
it was shown in \cite{BD}*{\S6} that
\begin{equation}
    \label{e1.2}
\Var[N_n(a,b)]=(1+\kappa_0+o(1))\mathbb E[N_n(a,b)]\quad \text{as}\quad n\to \infty,
\end{equation}
where 
\begin{equation}
    \label{e1.3}
    \kappa_0:=\frac{2}{\pi}\int_0^\infty f_0(s)ds
\end{equation}
and $(1+\kappa_0)\approx 0.5717310486$. In \cite{D}, Dalmao obtained the same result for $\Var[N_n(\mathbb R)]$.

Notably, for fixed $a$ and $b$, \eqref{e1.1} provides an exact formula for $\mathbb E[N_n(a,b)]$, while \eqref{e1.2} offers an asymptotic bound with a less precise error term $o(\sqrt n)$. The precise characterization of the error term in $\Var[N_n(a,b)]$ remains an open challenge, requiring a nontrivial and highly technical endeavor. Additionally, the determination of the second asymptotic term in the variance expansion for the number of real zeros across all classes of random polynomials in Section \ref{S1.1} is yet to be accomplished, presenting ample opportunities for further improvement.

Our interest extends to establishing a complete asymptotic expansion for the variance of real zeros in the large degree limit. As a crucial first step, we aim to derive an exact and accessible formula for $\Var[N_n(a,b)]$. To formulate our results, we introduce the functions
\begin{align*}
    \Delta_n(s)&:=(1+s^2)^{-n/2}\frac{(1+s^2)[1-(1+s^2)^{-n}]-n s^2}{1-(1+s^2)^{-n}-ns^2(1+s^2)^{-n}},\\ 
    \Gamma_n(s)&:=\frac{1-(1+s^2)^{-n}-ns^2(1+s^2)^{-n}}{[1-(1+s^2)^{-n}]^{3/2}},\\
F_n(s)&:=\left(\sqrt{1-\Delta_n^2(s)}+\Delta_n(s) \arcsin \Delta_n(s)\right)\Gamma_n(s)-1,
\end{align*}
along with the integrals
\begin{align*}
    K_n(a,b)&:=\frac{2}{\pi}\int_0^{\sqrt n |\alpha(a,b)|}\frac{F_n(s/\sqrt n)}{1+s^2/n}ds,\\
 L_n(a,b)&:=\frac{2}{\pi^2}\int_0^{\sqrt n |\alpha(a,b)|}\frac{F_n(s/\sqrt n)}{1+s^2/n}\sqrt n\arctan(s/\sqrt n)ds,
\end{align*}
where $\alpha(a,b):=(b-a)/(1+ab)$, for $-\infty\le a,b\le \infty$. For brevity, we will use $K_n$ and $L_n$ instead of $K_n(a,b)$ and $L_n(a,b)$ if $\sqrt n|\alpha(a,b)|$ is replaced by $\infty$.
\begin{theorem}[Exact variance formulas]\;
\label{T1.1}
\begin{enumerate}
\item If $\alpha(a,b)>0$, then 
\begin{equation}
    \label{e1.4}
    \Var[N_n(a,b)]= (1+K_n(a,b))\mathbb E[N_n(a,b)]-L_n(a,b).
\end{equation}
\item For $\alpha(a,b)=0$, one has $(a,b)=\mathbb R$ and 
\begin{equation}
    \label{e1.5}
    \Var[N_n(\mathbb R)]=(1+K_n)\mathbb E[N_n(\mathbb R)].
\end{equation}
\item When $\alpha(a,b)<0$, it holds that
\begin{equation}
    \label{e1.6}
    \begin{split}
    \Var[N_n(a,b)]&=(1+K_n)\mathbb E[N_n(a,b)]\\
    &+\left(K_n-K_n(a,b)\right)\left(\mathbb E[ N_n(a,b)]-\sqrt n\right)-L_n(a,b).
    \end{split}
\end{equation}
\end{enumerate}
\end{theorem}
By leveraging Theorem \ref{T1.1} and rigorously applying Taylor expansions, we derive precise asymptotic expressions for the variance $\Var[N_n(a,b)]$ in the large $n$ limit. The key advantage of Theorem \ref{T1.1} is its applicability to cases where the interval $(a,b)$ depends on $n$. Specifically, we establish a complete asymptotic expansion for $\Var[N_n(a,b)]$, provided that the interval $(a,b)$ does not contract too rapidly as $n \to \infty$.
\begin{theorem}[Variance asymptotic expansions]  \label{T1.2} Write
$\alpha_n=\sqrt n\alpha(a,b).$
\begin{enumerate}
    \item Assume first that $|\alpha_n|\to \infty$ as $n\to \infty$. Let
        \begin{equation}
        \label{e1.7}
        d_n=\left\lfloor\frac{\alpha_n^2+3\log|\alpha_n|}{\log n} \right\rfloor,
        \end{equation}
    where $\lfloor\cdot\rfloor$ denotes the integer part. Then 
    \begin{equation}
        \label{e1.8}
        \Var[N_n(a,b)]=\left(1+\sum_{k=0}^{d_n} \frac{\kappa_k}{n^k}\right)\mathbb E[N_n(a,b)]-\sum_{k=0}^{d_n} \frac{\ell_k}{n^k}+O(\alpha_n^4e^{-\alpha_n^2}),
    \end{equation}
    in which $\kappa_k$ and $\ell_k$ are real constants independent of $n$, $a$, and $b$. In particular, $\kappa_0$ is defined as in  \eqref{e1.3} and 
    \[
        \ell_0=\frac{2}{\pi^2}\int_0^\infty sf_0(s)ds.
    \]
    Therefore, if $\alpha_n^2/\log n \to \infty$ as $n\to \infty$, then 
    $\Var[N_n(a,b)]$ admits a full asymptotic expansion of the form
     \begin{equation}
        \label{e1.9}
        \Var[N_n(a,b)]\sim \left(1+\sum_{k=0}^\infty \frac{\kappa_k}{n^k}\right)\mathbb E[N_n(a,b)]-\sum_{k=0}^\infty \frac{\ell_k}{n^k}.
    \end{equation}
    \item  Assume now that $|\alpha_n|=O(1)$ as $n\to \infty$. If $\alpha_n=c>0$, then 
    \begin{equation}
        \label{e1.10}
        \Var[N_n(a,b)]\sim \left(1+\sum_{k=0}^\infty \frac{\kappa_{c,k}}{n^k}\right)\mathbb E[N_n(a,b)]-\sum_{k=0}^\infty \frac{\ell_{c,k}}{n^k},
    \end{equation}
    in which $\kappa_{c,k}$ and $\ell_{c,k}$ are real constants. In particular, 
    \[
    \kappa_{c,0}=\frac{2}{\pi}\int_0^cf_0(s)ds \quad \text{and}\quad \ell_{c,0}=\frac{2}{\pi^2}\int_0^csf_0(s)ds.
    \]
    For $\alpha_n=-c<0$, we have 
    \begin{equation}
    \begin{split}
        \label{e1.11}
        \Var[N_n(a,b)]&\sim \left(1+\sum_{k=0}^\infty \frac{\kappa_{k}}{n^k}\right)\mathbb E[N_n(a,b)]\\
        &-\left(\sum_{k=0}^\infty\frac{\kappa_k-\kappa_{c,k}}{n^k}\right)\frac{\sqrt n}{\pi} \arctan\frac{c}{\sqrt n}-\sum_{k=0}^\infty \frac{\ell_{c,k}}{n^k}.
\end{split}
    \end{equation}
    \item Finally, assume that $\alpha_n=o(1)$ as $n\to \infty$.
 If $\alpha_n>0$,  then 
    \begin{equation}
    \label{e1.12}
    \Var[N_n(a,b)]=\frac{1}{\pi}\alpha_n-\frac{1}{\pi^2}\alpha_n^2+\frac{1}{12\pi}\alpha_n^3-\frac{5}{12\pi}\frac{\alpha_n^3}{n}+\frac{2}{3\pi^2}\frac{\alpha_n^4}{n}+O(\alpha_n^5).
\end{equation}
If $\alpha_n<0$, then 
    \begin{equation}
    \label{e1.13}
    \begin{split}
    \Var[N_n(a,b)]&=\left(1+\sum_{k=0}^{u_n} \frac{\kappa_k}{n^k}\right)\sqrt n +\frac{2}{\pi}\left(\alpha_n-\frac{\alpha_n^3}{3n}\right)\sum_{k=0}^{v_n}\frac{\kappa_k}{n^k}\\
&+\frac{1}{\pi}\alpha_n-\frac{1}{\pi^2}\alpha_n^2-\frac{1}{12\pi}\alpha_n^3-\frac{1}{4\pi}\frac{\alpha_n^3}{n}+\frac{2}{3\pi^2}\frac{\alpha_n^4}{n}+O(|\alpha_n|^5),
\end{split}
\end{equation}
where 
\[
u_n:=\left\lfloor \frac 12 -\frac{5\log|\alpha_n|}{\log n} \right\rfloor \quad \text{and}\quad v_n:=\left\lfloor -\frac{4\log|\alpha_n|}{\log n} \right\rfloor.
\]
If, in addition, $v_n\to \infty$ as $n\to \infty$, then
\begin{equation}
        \label{e1.14}
        \Var[N_n(a,b)]\sim \left(1+\sum_{k=0}^\infty \frac{\kappa_k}{n^k}\right)\sqrt n.
    \end{equation}
For $\alpha_n=0$, we have $(a,b)=\mathbb R$ and  $\Var[N_n(\mathbb R)]$ has a full asymptotic expansion of the form
    \begin{equation}
        \label{e1.15}
        \Var[N_n(\mathbb R)]\sim \left(1+\sum_{k=0}^\infty \frac{\kappa_k}{n^k}\right)\sqrt n.
    \end{equation}
\end{enumerate}
\end{theorem}
In Section \ref{S2.3}, we provide exact definitions for $\kappa_k$, $\kappa_{c,k}$, $\ell_{k}$, and $\ell_{c,k}$, along with some detailed numerical computations (see Table \ref{tab1}).

\begin{remark}
    While determining the second-order term in the variance asymptotic expansion for the number of real zeros remains challenging and largely unexplored for other classes of random polynomials listed in Section \ref{S1.1}, it becomes more tractable for elliptic polynomials, thanks to the exact variance formulas provided in Theorem \ref{T1.1}. In the next subsection, we extend our analysis to encompass other classes of Gaussian processes, establishing precise expressions for cumulants of the number of real zeros (as detailed in Theorems \ref{T1.14} and \ref{T1.24}). These expressions form the foundation for deriving second-order terms or even full asymptotic expansions for all cumulants, central moments, and moments of the number of real zeros of these processes (as outlined in Remark \ref{r.asym.cumulants}, Theorem \ref{T1.20}, Remark \ref{r.asym.central.moments}, and Remark \ref{r.asym.moments}).
\end{remark}

Here and throughout, for a positive integer $k$ and a random variable $X$, let $s_k[X]$ and $\mu_k[X]$ denote the $k$th cumulant and $k$th central moment of $X$, respectively.

Our next objective is to explore the asymptotic behaviors of the cumulants $s_k[N_n(a,b)]$.
\begin{theorem}[Asymptotics of  cumulants] \label{T1.3} For each positive integer $k$, there exists a finite constant $\beta_k$ independent of $n$, $a$, and $b$, such that
\begin{equation}
\label{e1.16}
s_k[N_n(a,b)]=\beta_k \mathbb E[N_n(a,b)]+O(1) \quad \text{as}\quad n\to \infty.
\end{equation}
\end{theorem}
\begin{remark}
Since $s_1[N_n(a,b)]=\mathbb E[N_n(a,b)]$, \eqref{e1.16} is trivial for $k=1$, where $\beta_1=1$. Utilizing Theorem \ref{T1.2} and recognizing that $s_2[N_n(a,b)] = \Var[N_n(a,b)]$, we establish the validity of \eqref{e1.16} for $k=2$, with $\beta_2=1+\kappa_0$. It is worth noting that when studying the gap probabilities for elliptic polynomials, Schehr and Majumdar \cite{SM}*{Appendix E} proved that
\[
s_3[N_n(a,b)]\sim \beta_3 \mathbb E[N_n(a,b)] \quad \text{as}\quad n\to \infty,
\]
under the assumptions that $\beta_3$ is well-defined and  $\mathbb E[N_n(a,b)]\sim \sqrt n$ in the large $n$ limit. They expected a similar mechanism to hold for higher values of $k$ (see \cite{SM}*{Equation 93}). Accordingly, our theorem provides a fuller treatment.
\end{remark}
Using the asymptotics in \eqref{e1.16}, we establish the asymptotic normality of $N_n(a,b)$.
\begin{theorem}[Central limit theorem] \label{T1.5} Let $\alpha_n$ be defined as in Theorem \ref{T1.2}.  If either $\alpha_n\le 0$ or $\alpha_n\to \infty$ as $n\to \infty$, then $N_n(a,b)$ satisfies the CLT. 
\end{theorem}
\begin{remark}
    In 2015, Dalmao \cite{D} established the CLT for $N_n(\mathbb R)$ using the Wiener-It\^o expansion and the fourth-moment theorem. Ancona and Letendre \cite{AL} independently validated Dalmao's finding in 2021 using the method of moments. In our current study, we utilize the asymptotic behaviors of cumulants, as described in Theorem \ref{T1.3}, to establish sufficient conditions on $(a, b)$ for $N_n(a, b)$ to satisfy the CLT. These conditions essentially imply that the interval $(a, b)$ should not shrink too rapidly as $n \to \infty$.
\end{remark}

Next, we can apply Theorem \ref{T1.3} to derive the asymptotics of the central moments $\mu_k[N_n(a,b)]$.

\begin{corollary}[Asymptotics of  central moments] \label{C1.7}
Fix $k\ge 1$. As $n\to \infty$, it holds that 
\begin{equation}
\label{e1.17}
    \mu_{2k}[N_n(a,b)]=\frac{(2k)!\beta_2^k}{k! 2^k} (\mathbb E[N_n(a,b)])^k+O((\mathbb E[N_n(a,b)])^{k-1})
\end{equation}
and 
\begin{equation}
\label{e1.18}
    \mu_{2k+1}[N_n(a,b)]=\frac{(2k+1)!\beta_2^{k-1}\beta_3}{(k-1)!2^{k-1}3!} (\mathbb E[N_n(a,b)])^k+O((\mathbb E[N_n(a,b)])^{k-1}).
\end{equation}
\end{corollary}
\begin{remark}
Ancona and Letendre \cite{AL} previously examined the asymptotics of $\mu_k[N_n(\mathbb R)]$ and showed that, as $n\to \infty$, 
\begin{equation}
\label{e1.19}
\mu_k[N_n(\mathbb R)]=\mu_k[\mathcal N(0,1)]\beta_2^{k/2}n^{k/4}+O(n^{(k-1)/4}\log^k(n)),
\end{equation}
where $\mu_k[\mathcal N(0,1)]$ denotes the $k$th moment of the standard normal distribution. However, since $\mu_{2k+1}[\mathcal N(0,1)]=0$, formula \eqref{e1.19} does not provide the leading asymptotics for $\mu_{2k+1}[N_n(\mathbb R)]$. Therefore, our results in Corollary \ref{C1.7} not only fill this gap but also offer an improvement of \eqref{e1.19}, as our error terms are only $O(n^{(\lfloor k/2 \rfloor-1)/2})$.
\end{remark}
By employing the asymptotics of central moments provided in \eqref{e1.17}, reinforced by a Borel-Cantelli type argument, we establish a strong law of large numbers for $N_n(a,b)$.
\begin{theorem}[Strong law of large numbers] \label{T1.9} If either $\alpha_n\le 0$ or $\sum_{n=1}^\infty \alpha^{-k}_n<\infty$ for some positive constant $k$, then 
\[
\frac{N_n(a,b)}{\mathbb E[N_n(a,b)]}\xrightarrow{a.s.} 1 \quad \text{as } n\to \infty.
\]
\end{theorem}
\begin{remark}  Similar considerations may apply to the linear statistics $N_n(\phi)$, defined as
\[
N_n(\phi)=\sum_{x\in Z_n}\phi(x),
\]
where $Z_n$ is the real zero set of the elliptic polynomial $P_n(x)$ and $\phi$ satisfies suitable assumptions. Further details on this topic can be found in \cite{AL}, where Ancona and Letendre considered the leading asymptotics of the central moments, the CLT, and the strong law of large numbers for these linear statistics. Note that $N_n(\phi)$ reduces to $N_n(a,b)$ if we set $\phi(x)=\pmb 1_{(a,b)}(x)$, denoting the indicator function of the interval $(a,b)$.
\end{remark}
In concluding this subsection, we identify potential directions for future research in the realm of elliptic polynomials.

To begin, for Gaussian elliptic polynomials, it follows from \eqref{e1.1} that $\mathbb E[N_n(\mathbb R)]$ is precisely $\sqrt{n}$ for all $n$. In \cite{BD1}, Bleher and Di, among other significant findings, extended this result to non-Gaussian counterparts.
\begin{theorem}[\cite{BD1}]
Assume there exist positive constants $c$ and $C$ such that the characteristic function $\varphi(s)$ of $\omega$ satisfies the conditions
\[
|\varphi(s)|\le \frac{1}{(1+c|s|)^6},\quad \left|\frac{d^j\varphi(s)}{ds^j} \right|\le \frac{C}{(1+c|s|)^6},\quad j=1,2,3,\quad  s\in \mathbb R. 
\]
Then, as $n\to \infty$,
\begin{equation}\label{e1.20}
\mathbb E[N_n(\R)]=\sqrt n+o(n^{1/2}).
\end{equation}
\end{theorem}
The same result, without the assumption on $\varphi(s)$, was established in a recent work by Flasche and Kabluchko \cite{FK}. In \cite{TV}*{Theorem 5.6}, Tao and Vu demonstrated the universality of this result, extending it to scenarios where the random variable $\omega$ has zero mean, unit variance, and finite $(2+\varepsilon)$-moments. A more refined quantitative version of \eqref{e1.20} was recently provided by Nguyen and Vu \cite{NV}*{Corollary 6.4}:
\begin{equation*}
\mathbb E[N_n(\R)]=\sqrt n+O(n^{1/2-c}),\quad c>0.
\end{equation*}

Considering the Gaussian elliptic polynomials, we infer from \eqref{e1.2} that
\begin{equation}
\label{e1.21}
\Var[N_n(\R)]=(1+\kappa_0)\sqrt{n}+o(n^{1/2})\quad \text{as}\quad n\to \infty.
\end{equation}
A natural question arises: Is \eqref{e1.21} still valid if $\omega$ has zero mean, unit variance, and finite $(2+\varepsilon)$-moments? More broadly, there is an interest in generalizing all the aforementioned results to a non-Gaussian setting.

Additionally, it could be intriguing to extend the findings of this paper to the number of real zeros of a square system $\pmb P=(P_1,\dots, P_m)$ of $m$ polynomial equations in $m$ variables 
of common degree $n>1$, 
\begin{equation*}
  P_{\ell}(\pmb x)=\sum_{|\pmb j|\le n}\pmb \omega^{(\ell)}_{\pmb j}\pmb x^{\pmb j},
\end{equation*}
where 
\begin{itemize}
  \item $\pmb j=(j_{1},\dots,j_{m})\in\mathbb N^{m}$ 
and $|\pmb j|=\sum^{m}_{k=1}j_{k}$; 
  \item $\pmb \omega^{(\ell)}_{\pmb j}=\omega^{(\ell)}_{j_1\dots j_m}\in\R$, $\ell=1,\dots,m$, $|\pmb j|\leq n$, and the coefficients $\pmb \omega^{(\ell)}_{\pmb j}$ 
are independent centered normally distributed random variables with variances
\begin{equation*}
  \Var[\pmb \omega^{(\ell)}_{\pmb j}]=\binom{n}{\pmb j}=\frac{n!}{ 
j_1!\dots j_m!(n-|\pmb j|)!};
\end{equation*}
  \item $\pmb x=(x_{1},\dots,x_{m})$ 
    and $\pmb x^{\pmb j}=\prod^m_{k=1} x^{j_k}_{k}$.
\end{itemize}

Such a system, also referred to as a Kostlan-Shub-Smale system,  was initially introduced and investigated by Kostlan \cite{Ko} and further developed by Armentano \etal  \cites{AADL, AADL1}, Aza\"{i}s and Wschebor \cite{AW0}, Bleher and Di \cite{BD1}, Edelman and Kostlan \cite{EK}, Shub and Smale \cite{SS}, Wschebor \cite{W}. Accordingly, we hope that the concepts and techniques of this paper may stimulate further research in this fascinating area. 
\subsection{Real zeros of nondegenerate real Gaussian analytic functions} \label{S1.3}
Let 
\[Q(z)=\sum_{j=0}^\infty \omega_j q_j(z)\] 
be a real Gaussian analytic function (real GAF) on $\mathbb C$; that is, $\omega_j$ are i.i.d. normalized Gaussian random variables and $q_j$ are analytic functions on $\mathbb C$ such that $\sum_{j=0}^\infty |q_j(z)|^2<\infty$ uniformly on any compact subset of $\mathbb C$ (see Do and Vu \cite{DV}). 

Let $k\ge 1$ be an integer. For $\pmb z =(x_1,y_1,...,x_k,y_k)\in \mathbb R^{2k}$, let $L^{\pmb z}$ be the linear functional defined as
\[
L^{\pmb z}Q(\pmb \xi)=\sum_{1\le j\le k}[x_jQ(\xi_j)+y_j Q'(\xi_j)],\quad \pmb \xi=(\xi_1,...,\xi_k)\in \mathbb R^k.
\]
Moreover, for any subset $I\subset \{1,...,k\}$, the linear functional $L_I^{\pmb z}$ is defined by summing over $j\in I$ instead of the full range. For further insights into the concept of linear functionals, we refer the reader to Do and Vu \cite{DV} and Nazarov and Sodin \cite{NS}.

For $\pmb \xi=(\xi_1,...,\xi_k)\in \mathbb R^k$ and a nonempty subset $I\subset \{1,...,k\}$, we define $\pmb \xi_I=(\xi_i)_{i\in I}$, and express the distance between the configurations $\pmb \xi_I$ and $\pmb \xi_J$ as
\[d(\pmb \xi_I, \pmb \xi_J)=\inf_{i\in I, j\in J}|\xi_i-\xi_j|.\]

Define $\Psi_k$ as the set of all non-increasing functions $\psi: [0,\infty)\to [0,\infty)$ such that
\[
\int_0^\infty\psi(x) x^{k-1}dx<\infty,
\]
and let $\Psi_\infty =\bigcap_{k=1}^\infty \Psi_k$. The inclusion chain $\Psi_1\supset \Psi_2 \supset \cdots \supset \Psi_\infty$ follows immediately.

Define $\mathcal A_k$ as the set of all real GAFs $Q$ satisfying the following three hypotheses:
\begin{enumerate}[label={$(H_\arabic*)$}]
\item  \label{H1} $Q$ is $2k$-nondegenerate. 
\item  \label{H2} For each $x\in \mathbb R$, there exists a deterministic function $Q_x: \mathbb R\to (0,\infty)$ such that 
\[\mathbb  E[Q_x(t)Q(t+x)Q_x(s)Q(s+x)]=\mathbb  E[Q(t)Q(s)],\quad t,s\in \mathbb R.\]
\item  \label{H3} There  exist finite positive constants $c_k$ and $\tau_k$, along with a function $\psi\in \Psi_k$, such that the following holds: For any $\pmb z=(x_1,y_1,...,x_k,y_k)\in \mathbb R^{2k}$, $\pmb \xi=(\xi_1,...,\xi_k)\in \mathbb R^k$, and any partition $\{1,...,k\}=I\cup J$, with $d:=d(\pmb \xi_I,\pmb \xi_J)\ge 2\tau_k$, we have 
\[
\left|\mathbb  E\left[L_I^{\pmb z}Q(\pmb \xi_I)L_J^{\pmb z}Q(\pmb \xi_J)\right]\right| \le c_k \psi(d-\tau_k)\big(\mathbb  E[\left|L_I^{\pmb z}Q(\pmb \xi_I)\right|^2 ]+\mathbb  E [\left|L_J^{\pmb z}Q(\pmb \xi_J)\right|^2]\big).
\]
\end{enumerate} 
Furthermore, let $\mathcal A_\infty=\bigcap_{k=1}^\infty \mathcal A_k$.
\begin{remark} \label{R1.12} We refer the reader to \cite{DV}*{\S9} for the precise definition of nondegenerate real GAFs, which serves as the real counterpart to the complex nondegeneracy notion introduced in \cite{NS}. For instance, if $Q(z) = \sum_{j=0}^\infty \omega_j c_j z^j$, where real constants $c_j$ satisfy $\sum_{j=0}^\infty c_j^2<\infty$ and $c_0, c_1, \ldots, c_{2k-1}\ne 0$, then $Q$ is a $2k$-nondegenerate GAF. Hypothesis \ref{H1} ensures the local boundedness of the $k$-point correlation function $\rho_{k}$ for the real zeros of $Q$ (see Lemma \ref{L3.2}). 

Assumption \ref{H2} further asserts that the distribution of the real zeros of $Q$ is invariant under translations on $\mathbb R$. Specifically, if $Q$ is stationary, then \ref{H2} holds with $Q_x\equiv 1$. 

Hypothesis \ref{H3} introduces a clustering property for $\rho_k$, indicating that if the variables in $\mathbb{R}^k$ can be divided into two well-separated clusters, the correlation function $\rho_k$ closely approximates the product of the corresponding factors (see Lemma \ref{L3.3}). 

If $Q$ is $2k$-nondegenerate and $m\in \{1,...,k\}$, it is also $2m$-nondegenerate. Additionally, when \ref{H3} holds for $k\ge 1$, it extends to all positive integers $m\le k$. This establishes the inclusion chain $\mathcal A_1 \supset \mathcal A_2 \supset \cdots \supset A_\infty$. Moreover, if $Q\in \mathcal A_k$ and $m$ is a positive integer such that $m\le k$, the $m$-point correlation function $\rho_{m}$ for the real zeros of $Q$ is uniformly bounded on $\mathbb R^m$ (see Lemma \ref{L3.4}).
 \end{remark}
 \begin{example} \label{E1.13} For the Gaussian Weyl series, defined as $W(z) = \sum_{j=0}^\infty \omega_j\frac{z^j}{\sqrt{j!}}$ with $\omega_j$ being i.i.d. normalized Gaussian random variables, we have $W\in \mathcal A_\infty$.

Indeed, according to Remark \ref{R1.12}, $W$ is $2k$-nondegenerate for any $k \ge 1$. For each $x \in \mathbb{R}$, let $W_x(t) = e^{-xt-x^2/2}$. Then, for any $t, s \in \mathbb{R}$, we have
 \begin{align*}
  \mathbb E[W_x(t)W(t+x)W_x(s)W(s+x)]&=e^{-xt-x^2/2}e^{-xs-x^2/2}e^{(t+x)(s+x)}\\
  &=e^{ts}\\
  &=\mathbb E[W(t)W(s)],   
 \end{align*}
 showing that $W$ satisfies hypothesis \ref{H2}. Finally, it follows from \cite{DV}*{Lemma 18} that $W$ satisfies hypothesis \ref{H3} for any $k\ge 1$ with $\psi(t):=e^{-t^2/2}\in \Psi_\infty$.   
 \end{example}
\begin{theorem}[Precise expressions for cumulants] \label{T1.14} 
    Fix $k\in \mathbb N\cup\{\infty\}$ and let $Q\in \mathcal A_k$. Given $R>0$, let $N_Q(R)$ denote the number of real zeros of $Q$ on $[0,R]$. For any positive integer $m$ with $m\le k$, there are bounded functions $\theta_m^Q$ and $\lambda_m^Q: [0,\infty)\to \mathbb R$ such that
    \begin{equation} \label{e1.22}
        s_m[N_Q(R)] =R\theta_m^Q(R)+\lambda_m^Q(R).
    \end{equation}
    Furthermore, there exist finite constants $\theta_{m,\infty}^Q$, $\lambda_{m,\infty}^Q$, and a function $\psi\in \Psi_k$, independent of $R$, satisfying, as $R\to \infty$,
    \begin{equation} 
    \label{e1.23} 
    \begin{split} \theta_m^Q(R)&=\theta_{m,\infty}^Q+O\left(\int_{R}^\infty \psi(x)x^{m-2}dx\right),\\ 
    \lambda_m^Q(R)&= \lambda_{m,\infty}^Q+O\left(\int_{R}^\infty \psi(x)x^{m-1}dx\right).  
    \end{split}
    \end{equation}
\end{theorem}
\begin{remark}\label{r.asym.cumulants}
    If $\psi\in \Psi_k$, then as $R\to \infty$,
    \[
    \int_R^\infty \psi(x)x^{k-1}dx=o(1).
    \]
    Consequently, from \eqref{e1.22} and \eqref{e1.23}, as $R\to \infty$,
    \[ s_m[N_Q(R)]=R\theta_{m,\infty}^Q+\lambda_{m,\infty}^Q+o(R^{m-k}),
    \]
   where $m$ and $k$ are positive integers with $m\leq k$, $Q\in \mathcal A_k$, and $\theta_{m,\infty}^Q$ and $\lambda_{m,\infty}^Q$ are finite real numbers independent of $R$. Furthermore, if $Q\in \mathcal A_\infty$ and $m$ is any positive integer, then $s_m[N_Q(R)]$ has a full asymptotic expansion of the form
    \[
    s_m[N_Q(R)]\sim R\theta_{m,\infty}^Q+\lambda_{m,\infty}^Q.
    \]
\end{remark}
\begin{remark}
    In \cite{G}, Gass investigated cumulant asymptotics for random models with slowly decreasing covariance functions. Gass's approach refines recent works by Ancona and Letendre \cites{AL, AL1}, which established the convergence of the $k$th central moment of the number of real zeros, suitably scaled, to the $k$th moment of a Gaussian random variable. These investigations assume that the covariance functions and their derivatives belong to certain Lebesgue spaces. For instance, as shown in \cite{G}*{Theorem 1.5}, if $Q$ is a stationary Gaussian process with $C^\infty$ paths and its covariance function and derivatives belong to $L^k(\mathbb R)$ for all $k>1$, then for any positive integer $m$, 
    \[
    \lim_{R\to \infty}\frac{s_m[N_Q(R)]}{R}=\theta_{m,\infty}^Q.
    \]
   In comparison to this limit, under the assumptions of fast decorrelation and analyticity, the outcomes in Remark \ref{r.asym.cumulants} provide a more thorough and detailed characterization.
\end{remark}

 By combining Example \ref{E1.13} and Theorem \ref{T1.14}, we derive precise expressions and asymptotic behaviors for the cumulants of the number of real zeros of the Gaussian Weyl series. 
\begin{corollary}[Real zeros of the Gaussian Weyl series] \label{C1.17} 
For $R>0$, let $N_W(R)$ denote the number of real zeros of the Gaussian Weyl series $W$ on $[0, R]$. For any positive integer $k$, there exist bounded functions $\theta_k^W, \lambda_k^W: [0,\infty)\to \mathbb R$ such that
\[
s_k[N_W(R)]=R\theta^W_k(R)+\lambda^W_k(R).
\]
Furthermore, there exist constants $\theta_{k,\infty}^W$,  $\lambda_{k,\infty}^W$, and $c_k>0$, which are independent of $R$ and satisfy, as $R\to \infty$,
    \[
    \theta_k^W(R)=\theta_{k,\infty}^W+O(R^{k-3}e^{-c_kR^2}) \quad \text{and}\quad \lambda_k^W(R)=\lambda_{k,\infty}^W+O(R^{k-2}e^{-c_kR^2}).
    \]
    Specifically, $\theta_{k,\infty}^W=\frac{1}{\pi}\beta_k$, where $\beta_k$ is as in Theorem \ref{T1.3}. Consequently, $\theta^W_{2,\infty}>0$ and $N_W(R)$ follows the CLT as $R\to \infty$.
\end{corollary}
\begin{remark} 
   For a nonzero, compactly supported, and bounded function $\phi: \mathbb R\to \mathbb R$, define $N^\phi_W(R)=\sum_{x\in Z(W)}\phi(x/R)$, where $Z(W)$ represents the multiset of real zeros of $W$. In \cite{DV}, Do and Vu established the asymptotic normality result for $N^\phi_W(R)$ by employing the cumulant convergence theorem (see \cite{J}) and demonstrating that
    \[
    \lim_{R\to \infty}\frac{\Var[N^\phi_W(R)]}{R\|\phi\|_2^2}=\theta^W_{2,\infty}
    \]
    and 
     \[|s_k[N^\phi_W(R)]| \le C_{\phi,k}R,\]
    where $C_{\phi, k}$ is a positive constant depending only on $\phi$ and $k$. These findings were motivated by related results for the complex zeros of $W$ by Nazarov and Sodin \cite{NS}. Choosing $\phi(t)=\pmb 1_{[0,R]}(t)$ implies $N^\phi_W(R)=N_W(R)$. Consequently, the CLT also applies to $N_W(R)$. However, it is worth noting that Corollary \ref{C1.17} provides much more precise estimates for $s_k[N_W(R)]$.
\end{remark}
Now, we broaden the scope of Theorem \ref{T1.14} to include a wider range of Gaussian processes, such as random polynomials. To this end, fix a positive integer $k$ and let $Q\in \mathcal A_k$. For $n\ge 1$, let $\mathcal I_n\subset \mathbb R$ be an interval whose endpoints may depend on $n$ and let $\varepsilon_n>0$ be such that $\varepsilon_n\to 0$ as $n\to \infty$. Define $\mathcal A_k^Q(\mathcal I_n, \varepsilon_n)$ as the set of all smooth centered Gaussian processes $\{Q_n(x): x\in \mathcal I_n\}$ that satisfy the following hypothesis:
\begin{enumerate}[label={$(H_4)$}]
\item  \label{H4} For any $\pmb z=(x_1,y_1,...,x_k,y_k)\in \mathbb R^{2k}$ and $\pmb \xi=(\xi_1,...,\xi_k)\in \mathcal I_n^k$, we have 
\[
(1-\varepsilon_n)\mathbb E[|L^{\pmb z}Q(\pmb \xi)|^2] \le \mathbb E[|L^{\pmb z}Q_{n}(\pmb \xi)|^2]\le (1+\varepsilon_n)\mathbb E[|L^{\pmb z}Q(\pmb \xi)|^2].
\]
\end{enumerate}
If $Q\in \mathcal A_\infty$ and hypothesis \ref{H4} holds for all $k\ge 1$, we write $Q_n\in \mathcal A_\infty^Q(\mathcal I_n,\varepsilon_n)$.
\begin{remark} For $\pmb \xi=(\xi_1,...,\xi_k)\in \mathbb R^k$, let $\Lambda(\pmb \xi)$ and $\Lambda_n(\pmb \xi)$ be the covariance matrices of $(Q(\xi_1),Q'(\xi_1),...,Q(\xi_k), Q'(\xi_k))$ and $(Q_n(\xi_1),Q_n'(\xi_1),...,Q_n(\xi_k), Q_n'(\xi_k))$, respectively. Through elementary computation, for any $\pmb z=(x_1,y_1,...,x_k,y_k)\in \mathbb R^{2k}$, we have
\begin{equation} \label{e1.24}
    \langle \pmb z\Lambda(\pmb \xi),\pmb z\rangle=\mathbb E[|L^{\pmb z}Q(\pmb \xi)|^2].
\end{equation}
Therefore, hypothesis \ref{H4} implies that 
\begin{equation}\label{e1.25}
    (1-\varepsilon_n)\Lambda(\pmb \xi) \le \Lambda_n(\pmb \xi) \le (1+\varepsilon_n)\Lambda(\pmb \xi),\quad \pmb\xi \in \mathcal I_n^k.
\end{equation}
Using the Kac-Rice formula (see, for example, \cite{AW}*{Chapter 3}) and \eqref{e1.25}, we deduce that the correlation functions for the real zeros of $Q_n$ on $\mathcal I_n$ closely approximate those of $Q$. It is worth noting that $Q_n$ may not meet the criteria for a real GAF.

Alternatively, we can establish \eqref{e1.25} by examining the covariance functions of $Q$ and $Q_n$, defined as $r(x,y)=\mathbb E[Q(x) Q(y)]$ and $r_n(x,y)=\mathbb E[Q_n(x) Q_n(y)]$.
In this context,
\[
\Lambda(\pmb\xi) =\begin{pmatrix}
  \Lambda_{ij}  
\end{pmatrix}_{i,j=1}^k,\quad \text{where}\quad \Lambda_{ij}=\begin{pmatrix}
    r(\xi_i,\xi_j)&\frac{\partial r}{\partial y}(\xi_i,\xi_j)\\
    \frac{\partial r}{\partial x}(\xi_i,\xi_j)&\frac{\partial^2r}{\partial x\partial y}(\xi_i,\xi_j)
\end{pmatrix}
\]
and 
\[
\Lambda_n(\pmb\xi) =\begin{pmatrix}
  \Lambda_{ij}^{(n)}  
\end{pmatrix}_{i,j=1}^k,\quad \text{where}\quad \Lambda_{ij}^{(n)}=\begin{pmatrix}
    r_n(\xi_i,\xi_j)&\frac{\partial r_n}{\partial y}(\xi_i,\xi_j)\\
    \frac{\partial r_n}{\partial x}(\xi_i,\xi_j)&\frac{\partial^2r_n}{\partial x\partial y}(\xi_i,\xi_j)
\end{pmatrix}.
\]
Therefore, assuming that there exists a positive constant $\varepsilon_n$, with $\lim_{n\to \infty}\varepsilon_n=0$, such that for all $i,j\in \{0,1\}$ and $(x,y)\in \mathcal I_n^2$, we have
\begin{equation}
    \label{e1.26}
    \frac{\partial^{i+j}r_n}{\partial x^i\partial y^j}(x,y)=(1+O(\varepsilon_n))\frac{\partial^{i+j}r}{\partial x^i\partial y^j}(x,y),
\end{equation}
we can then obtain \eqref{e1.25}. 

Note that when $Q_n$ represents the $n$th partial sum of $Q$, the partial derivatives $\frac{\partial^{i+j}r_n}{\partial x^i\partial y^j}$ converge uniformly to $\frac{\partial^{i+j}r}{\partial x^i\partial y^j}$ on any compact subset of $\mathbb R^2$. In this scenario, it suffices to determine the rate of convergence $\varepsilon_n$ and verify \eqref{e1.26} for the case $\sup_{n}|I_n|=\infty$.
\end{remark}

\begin{theorem}[Asymptotic cumulants] \label{T1.20} 
   Let $k\in \mathbb N\cup\{\infty\}$, $Q\in \mathcal A_k$, and  $Q_n\in \mathcal A_k^Q(\mathcal I_n, \varepsilon_n)$. For a finite interval $I_n\subset \mathcal I_n$, we define $N_{Q_n}(I_n)$ as the number of real zeros of $Q_{n}$ on $I_n$. For any positive integer $m$ such that $m\le k$, one has 
    \begin{equation}\label{e1.27}
        s_m[N_{Q_n}(I_n)] =|I_n|\theta_{m}^Q(|I_n|)+\lambda_{m}^Q(|I_n|)+O(\varepsilon_n|I_n|^m), 
    \end{equation}
    where $\theta_{m}^Q$ and $\lambda_m^Q$ are bounded functions provided in Theorem \ref{T1.14}. Furthermore, as $n\to \infty$, if $|I_n|\to \infty$ and $\varepsilon_n|I_n|^m\to 0$, then 
    \begin{equation}\label{e1.28}
        s_m[N_{Q_n}(I_n)] =|I_n|\theta_{m,\infty}^Q +\lambda_{m,\infty}^Q+o(1).
    \end{equation}
\end{theorem}
\begin{corollary}[Strong law of large numbers] \label{C1.22}
    If $Q_n\in \mathcal A_{k+1}^Q(\mathcal I_n, \varepsilon_n)$ and $I_n\subset \mathcal I_n$ with $|I_n|<\infty$ and $\sum_{n=1}^\infty \max(|I_n|^{-m}, \varepsilon_n^m)<\infty$ for some finite positive constant $m\le k$, then
    \[
        \frac{N_{Q_n}(I_n)}{\mathbb E[N_{Q_n}(I_n)]} \xrightarrow{a.s.} 1 \quad \text{as } n\to \infty.
    \]
\end{corollary}

Consider the Gaussian Weyl polynomial, $W_n(z) = \sum_{j=0}^n \omega_j \frac{z^j}{\sqrt{j!}}$, where $\omega_j$ are i.i.d. normalized Gaussian random variables, serving as the $n$th partial sum of the Weyl series $W(z)$. It is well-known that the majority of real zeros of the Weyl polynomial $W_n(z)$ lie within the interval $[-\sqrt n+n^{1/6}, \sqrt n-n^{1/6}]$ (see, for example, \cite{DV} and \cite{TV}). 

Our focus is on the subinterval $\mathcal I_n=[-\varepsilon_1\sqrt n, \varepsilon_2\sqrt n]$, where  $0<\varepsilon_1\le \varepsilon_2<1$ and $\varepsilon =\max(\varepsilon_1\varepsilon_2e^{1+\varepsilon_1\varepsilon_2}, \varepsilon_2^2 e^{1-\varepsilon_2^2})<1$. Under these conditions, $W_n\in \mathcal A_\infty^W(\mathcal I_n, \varepsilon_n)$ with $\varepsilon_{n}=\varepsilon^n/\sqrt n$. To establish this, we only need to verify the estimates in \eqref{e1.26}, where $r(x,y)=\mathbb E[W(x)W(y)]$ and $r_n(x,y)=\mathbb E[W_n(x)W_n(y)]$. 

Through elementary computation, we find that $r(x,y) = e^{xy}$ and $r_n(x,y) = e_n(xy)$, where $e_n(x)=\sum_{j=0}^n\frac{x^j}{j!}$. Notably, the relationship $\frac{d}{dx}e_n(x) = e_{n-1}(x)$ allows us to estimate $\frac{\partial^{i+j}r_n}{\partial x\partial y}(x,y)$ by focusing solely on the estimation of $e_n(xy)$. Proving \eqref{e1.26} is then simplified to demonstrating
\[
e_n(nt)=e^{nt}(1+O(\varepsilon_n)),\quad t\in [-\varepsilon_1\varepsilon_2,\varepsilon_2^2],
\]
which immediately follows from Buckholtz \cite{Bu} and Stirling's formula.

Therefore, by synthesizing insights from Theorem \ref{T1.20}, Corollary \ref{C1.17}, and Corollary \ref{C1.22}, we establish the large $n$ asymptotic behaviors of cumulants, the CLT, and a strong law of large numbers for the real zeros of $W_n$ on $I_n\subset \mathcal I_n$.
\begin{corollary}[Real zeros of the Gaussian Weyl polynomials] Let $0<\varepsilon_1\le \varepsilon_2<1$ be such that $\varepsilon =\max(\varepsilon_1\varepsilon_2e^{1+\varepsilon_1\varepsilon_2}, \varepsilon_2^2 e^{1-\varepsilon_2^2})<1$. For a nonempty interval $I_n\subset [-\varepsilon_1\sqrt n, \varepsilon_2\sqrt n]$, let $N_{W_n}(I_n)$ denote the number of real zeros of the Gaussian Weyl polynomial $W_n$ of degree $n$. For any positive integer $k$, we have
    \[
  s_k[N_{W_n}(I_n)]=|I_n|\theta_{k}^W(|I_n|)+\lambda_{k}^W(|I_n|)+O\left(\frac{\varepsilon^n}{\sqrt n}|I_n|^k\right),
  \]
  where $\theta_{k}^W$ and $\lambda_{k}^W$ are defined as in Corollary \ref{C1.17}. Consequently, as $n\to \infty$, if $|I_n|\to \infty$, then 
  \[
  s_k[N_{W_n}(I_n)]=|I_n|\theta_{k,\infty}^W+\lambda_{k,\infty}^W+o(1)
  \]
 and hence, $N_{W_n}(I_n)$ satisfies the CLT. Furthermore, if there exists some positive constant $m$ such that $\sum_{n=1}^\infty |I_n|^{-m}<\infty$, then 
  \[
        \frac{N_{W_n}(I_n)}{\frac{1}{\pi}|I_n|} \xrightarrow{a.s.} 1 \quad \text{as } n\to \infty.
  \]
\end{corollary}

Finally, we propose an alternative approach to extending Theorem \ref{T1.14} to encompass more general Gaussian processes. 
\begin{theorem}[Real zeros of composition of functions] \label{T1.24}
    Fix $k\in \mathbb N\cup\{\infty\}$ and let $Q\in \mathcal A_k$. For $U=(-u, u)\subset \mathbb R$, let $\rho: U \to (0,\infty)$ be an integrable function. We set $\varrho(x) = \int_0^x \rho(t) dt$ and introduce a new process $P(x)=Q(\varrho(x))$ defined on $U$. For $(a,b)\subset U$, let $N_P(a,b)$ denote the number of real zeros of $P$ on $(a,b)$. For any positive integer $m$ with $m\le k$, it holds that 
    \begin{equation} \label{e1.29} s_m[N_P(a,b)]=\left(\int_a^b\rho(t)dt\right)\theta_m^Q\left(\int_a^b\rho(t)dt\right)+\lambda_m^Q\left(\int_a^b\rho(t)dt\right),
    \end{equation}
    where $\theta_m^Q$ and $\lambda_m^Q$ are bounded functions defined as in Theorem \ref{T1.14}.
\end{theorem}
\begin{remark} 
It is worth noting that the Gaussian processes $\{P(x): x\in U\}$ discussed in Theorem \ref{T1.24} might not possess analytic properties, and the distribution of the real zeros of $P$ may lack translation invariance. Additionally, if $\{x_j\}$ are the zeros of $P(x)$, then $\{z_j = \varrho(x_j)\}$ are the zeros of $Q(z)$. Therefore, if $\rho$ represents the density of real zeros $x_j$ of $P(x)$, so that
\[
\mathbb E[N_P(a,b)]=\int_a^b\rho(t)dt, 
\]
then $z_j = \varrho(x_j)$, referred to as the straightening of $x_j$, are uniformly distributed on $\varrho(U)$. This observation, coupled with Theorems \ref{T1.20} and \ref{T1.24}, provides a framework for studying the cumulants of real zeros of random functions through the examination of their straightened zero distribution. Exploiting these invariance properties offers substantial advantages through this approach.

Consider, for example, the Gaussian Kac polynomial $P_n(x) = \sum_{j=0}^n \omega_jx^j$ on $(-1,1)$. By the Kac formula \cite{K}, the limit of the density function of the real zeros of $P_n$ is given by  
\[\rho(t)= \frac{1}{\pi(1-t^2)}, \quad t\in (-1,1).\]
For the real zeros $x_j\in (-1,1)$, define the straightening of $x_j$ as
\[
z_j=\varrho(x_j),\quad \text{where}\quad \varrho(x)=\int_0^x\rho(t)dt=\frac{1}{2\pi}\log\left|\frac{1+x}{1-x}\right|.
\]
In the limit as $n\to \infty$, the straightened zeros $z_j$ are uniformly distributed on the real line. Furthermore, it was shown in \cite{BD}*{Theorem 1.2} that the limit $k$-point correlation function $\varrho_k$ of the straightened zeros $z_j=\varrho(x_j)$ of the Kac polynomial is given by
\[
\varrho_k(\pmb \xi)=\frac{1}{2^k}\prod_{1\le i<j \le k}\tanh^4\pi(\xi_i-\xi_j)\int_{\mathbb R^k}|\eta_1\cdots \eta_k|e^{-\frac 12 \langle \pmb \eta \Lambda(\pmb \xi), \pmb \eta\rangle}d\eta_1\cdots d\eta_k,
\]
where $\pmb \xi=(\xi_1,...,\xi_k)$, $\pmb \eta=(\eta_1,...,\eta_k)$, and the matrix $\Lambda(\pmb \xi)$ is defined as 
\[
\Lambda(\pmb \xi)=\begin{pmatrix}
    \frac{1}{\cosh \pi(\xi_i-\xi_j)}
\end{pmatrix}_{i,j=1}^k.
\]
Remarkably, the function $\varrho_k(\pmb \xi)$ is translation-invariant. While it is possible to estimate the cumulants of the number of straightened zeros, we will not delve into this topic further here.
\end{remark}

\begin{remark}
    Extending the insights from Theorems \ref{T1.14}, \ref{T1.20}, and \ref{T1.24} by relaxing the constraints on $Q$ opens up intriguing avenues for exploration. One promising direction involves building upon the techniques delineated in \cites{AL} and \cite{G}.
\end{remark}
\subsection{Organization of the paper and notational conventions}
In Section \ref{S2.1}, we revisit the Kac-Rice formula,  crucial for proving Theorem \ref{T1.1} in Section \ref{S2.2}. Section \ref{S2.3} then utilizes asymptotic expansions of $K_n(a,b)$ and $L_n(a,b)$ to establish Theorem \ref{T1.2}.
Moving to Section \ref{S3.1}, we provide a concise review of correlation and truncated correlation functions, emphasizing their role in computing moments and cumulants. Section \ref{S3.2} offers estimates for the correlation functions of real zeros of real GAFs, playing a pivotal role in proving Theorems \ref{T1.14}, \ref{T1.20}, and \ref{T1.24} in Sections \ref{S3.3}, \ref{S3.4}, and \ref{S3.5}. Section \ref{S3.6} analyzes correlation functions of real zeros of Gaussian elliptic polynomials, contributing to the establishment of Theorem \ref{T1.3} in Section \ref{S3.7}.
Section \ref{S4} focuses on asymptotic normality results, presenting the proof of Theorem \ref{T1.5}.
In Section \ref{S5.1}, we establish the asymptotics of central moments, as stated in Corollary \ref{C1.7}, while the proof of a strong law of large numbers is provided in Section \ref{S5.2}.

In this paper, we employ the standard asymptotic notation $A=O(B)$ or $A\ll B$ to indicate the bound $|A| \le cB$, where $c$ is independent of $B$. For any $I\subset \mathbb R$, we denote $|I|$ as the length of $I$ if $I$ is an interval, or the cardinality of $I$ if $I$ is a finite set. Given $I\subset \mathbb R$ and a positive integer $k$, we define $I^k=I\times \cdots \times I\subset \mathbb R^k$. The constants $c_k$ and $C_k$ may depend on $k$ and can vary across different contexts.
\section{Variance of the number of real zeros} 
Our primary tool for computing the variance is the Kac-Rice formula (see \cite{AW}*{Chapter 3}). It is very general and allows one to obtain an integral formula for the variance of the number of real zeros of a smooth Gaussian process. 
\subsection{The Kac-Rice formula}\label{S2.1} 
Let $\mathcal G=\{G(x), x\in I\}$, $I$ an interval on the real line, be a non-degenerate, centered Gaussian process having $C^1$ paths. We normalize $G$ so that the covariance kernel defined by $r(x,y)=\mathbb E[G(x)G(y)]$ satisfies $r(x,x)=1$. A trivial verification shows that
\begin{align*}
    \mathbb E[G(x)G'(x)]&=\mathbb E[G(y)G'(y)]=0,\\
    \mathbb E[G'(x)G(y)]&=\frac{\partial}{\partial x}r(x,y)=:r_{10}(x,y),\\ \mathbb E[G(x)G'(y)]&=\frac{\partial}{\partial y}r(x,y)=:r_{01}(x,y),\\ \mathbb E[G'(x)G'(y)]&=\frac{\partial^2}{\partial x\partial y}r(x,y)=:r_{11}(x,y).
\end{align*}
Let $\rho_1(x)=\frac{1}{\pi}\sqrt{r_{11}(x,x)}$ and 
\begin{equation}
    \label{e2.1}
    \rho_2(x,y)=\frac{1}{\pi^2}\left(\sqrt{1-\rho^2(x,y)}+\rho(x,y) \arcsin \rho(x,y)\right)\frac{\sigma(x,y)}{\sqrt{1-r^2(x,y)}},
\end{equation}
where 
\begin{align*}
\rho(x,y)&:=\frac{r_{11}(x,y)+\frac{r(x,y)r_{10}(x,y)r_{01}(x,y)}{1-r^2(x,y)}}{\sqrt{\left(r_{11}(x,x)-\frac{r_{10}^2(x,y)}{1-r^2(x,y)}\right)\left(r_{11}(y,y)-\frac{r_{01}^2(x,y)}{1-r^2(x,y)}\right)}},\\
\sigma(x,y)&:=\sqrt{\left(r_{11}(x,x)-\frac{r_{10}^2(x,y)}{1-r^2(x,y)}\right)\left(r_{11}(y,y)-\frac{r_{01}^2(x,y)}{1-r^2(x,y)}\right)}.
\end{align*}
The following lemma is standard (see, for example, \cite{BD} and  \cite{LP}), and we include a proof here for the convenience of the reader.
\begin{lemma}
\label{L2.1}
Let $N(I)$ denote the number of real zeros of $G$ on $I$. One has
\begin{equation}
    \label{e2.2}
    \Var[N(I)]=\int_{I^2}\left[\rho_2(x,y)-\rho_1(x)\rho_1(y)\right]dydx +\int_{I} \rho_1(x)dx.
\end{equation}
\end{lemma}
\begin{proof}
We first write
\begin{equation}
    \label{e2.3}
    \Var[N(I)]=\mathbb E[N(I)(N(I)-1)]-( \mathbb E[N(I)])^2+ \mathbb E[N(I)].
\end{equation}
By the Kac-Rice formula, 
\begin{equation}
    \label{e2.4}
    \mathbb E[N(I)]=\int_{I}\rho_1(x)dx.
\end{equation}
As a consequence, 
\begin{equation}
    \label{e2.5}
(\mathbb E[N(I)])^2=\int_{I^2}\rho_1(x)\rho_1(y)dydx.
\end{equation}
The Rice formula for the second factorial moment now asserts that
\[
    \mathbb E[N(I)(N(I)-1)]=\int_{I^2}\mathbb E[|G'(x)G'(y)|\mid G(x)=0, G(y)=0]p_{x,y}(0,0)dydx,
\]
where $p_{x,y}$ is the joint density of $(G(x),G(y))$, so 
\[
p_{x,y}(0,0)=\frac{1}{2\pi \sqrt{1-r^2(x,y)}}.
\]
Observe that conditionally on $\mathcal C:=\{G(x)=0, G(y)=0\}$, $G'(x)$ and $G'(y)$ have a joint Gaussian distribution. Utilizing regression formulas, we derive the following expressions for expectations, variances, and covariances:
\begin{align*}
    \mathbb E[G'(x)\mid \mathcal C]&=\mathbb E[G'(y)\mid \mathcal C]=0,\\
    \Var[G'(x)\mid \mathcal C]&=r_{11}(x,x)-\frac{r_{10}^2(x,y)}{1-r^2(x,y)},\\
    \Var[G'(y)\mid \mathcal C]&=r_{11}(y,y)-\frac{r_{01}^2(x,y)}{1-r^2(x,y)},\\
    \mathbb E[G'(x)G'(y)\mid \mathcal C]&=r_{11}(x,y)+\frac{r(x,y)r_{10}(x,y)r_{01}(x,y)}{1-r^2(x,y)}.
\end{align*}
Therefore, 
\begin{align*}
\frac{ \mathbb E[G'(x)G'(y)\mid \mathcal C]}{\sqrt{\Var[G'(x)\mid \mathcal C] \Var[G'(y)\mid \mathcal C]}}=\rho(x,y)\quad \text{and}\quad 
\sqrt{\Var[G'(x)\mid \mathcal C] \Var[G'(y)\mid \mathcal C]}=\sigma(x,y).
\end{align*}
Applying \cite{LW}*{Corollary 3.1} leads to   
\[
  \mathbb E[|G'(x)G'(y)| \mid \mathcal C]=\frac{2}{\pi}\left(\sqrt{1-\rho^2(x,y)}+\rho(x,y) \arcsin \rho(x,y)\right)\sigma(x,y).
\]
Combining this with \eqref{e2.1}, we derive
\begin{equation}
    \label{e2.6}
  \mathbb E[N(I)(N(I)-1)]=\int_{I^2} \rho_2(x,y)dydx.
\end{equation}
Substituting \eqref{e2.4}, \eqref{e2.5}, and \eqref{e2.6} into \eqref{e2.3}, we  obtain \eqref{e2.2} as required. 
\end{proof}
\begin{remark}
It is worth suggesting that, by leveraging Lemma \ref{L2.1} and conducting a thorough analysis, we can deduce the leading terms in the asymptotics of $\Var[N_n(a,b)]$ for random polynomials described in Section \ref{S1.1}, with $\omega$ being Gaussian.
\end{remark}
\subsection{Proof of Theorem \ref{T1.1}} \label{S2.2} 
 Let us now apply Lemma \ref{L2.1} to the Gaussian elliptic polynomial $P_n(x)$. Using the binomial theorem we see that the covariance function of $P_n(x)/\sqrt{\Var[P_n(x)]}$ is given by 
\[
r(x,y)=\frac{\mathbb E[P_n(x)P_n(y)]}{\sqrt{\Var[P_n(x)]\Var[P_n(y)]}}=\frac{(1+xy)^n}{\sqrt{(1+x^2)^n(1+y^2)^n}}.
\]
A straightforward calculation reveals that 
\begin{align*}
    r_{10}(x,y)&=nr(x,y)\frac{(y-x)}{(1+xy)(1+x^2)},\\
    r_{01}(x,y)&=nr(x,y)\frac{(x-y)}{(1+xy)(1+y^2)},\\
    r_{11}(x,y)&=nr(x,y)\left(\frac{1}{(1+xy)^2}-\frac{n(x-y)^2}{(1+xy)^2(1+x^2)(1+y^2)}\right).
\end{align*}
Using  $\alpha(x,y)=(y-x)/(1+xy)$ and  $(1+x^2)(1+y^2)=(1+xy)^2+(x-y)^2$, expression \eqref{e2.1} can be reformulated as
\[
\rho_2(x,y)=\frac{1}{\pi^2}\frac{n}{(1+x^2)(1+y^2)}(F_n(\alpha(x,y))+1).
\]
Together with 
\[\rho_1(x)\rho_1(y)=\frac{1}{\pi^2}\frac{n}{(1+x^2)(1+y^2)},\] 
we deduce from \eqref{e2.2} that
\begin{equation}
    \label{e2.7}
    \Var[N_n(a,b)]=I_{n,2}(a,b)+\mathbb E[N_n(a,b)],
\end{equation}
where 
\begin{equation}
    \label{e2.8}
I_{n,2}(a,b):=\frac{1}{\pi^2}\int_a^b\int_a^b \frac{n}{(1+x^2)(1+y^2)}F_n(\alpha(x,y))dydx.
\end{equation}
The proof of Theorem \ref{T1.1} now falls naturally into three following lemmas.
\begin{lemma} \label{L2.3}
If $\alpha(a,b)>0$, then 
\begin{equation}
\label{e2.9}
I_{n,2}(a,b)=K_n(a,b)\mathbb E[N_n(a,b)]-L_n(a,b).
\end{equation}
This gives \eqref{e1.4} when substituted in \eqref{e2.7}.
\end{lemma}
\begin{proof}
If $\alpha(a,b)>0$, then $ab>-1$ and hence, $1+xy\ne 0$ for all $a<x,y<b$. Fix $x\in (a,b)$ and make the change of variables $s=\sqrt n\alpha(x,y)$ for the integral $\int_a^b \frac{\sqrt n  F_n(\alpha(x,y))}{1+y^2}dy$, we see that
\[
    I_{n,2}(a,b)=\frac{1}{\pi^2}\int_a^b \frac{\sqrt n dx}{1+x^2} \int_{\sqrt n\alpha(x,a)}^{\sqrt n\alpha(x,b)} \frac{F_n(s/\sqrt n)}{1+s^2/n}ds.
\]
Using Fubini's theorem, the identity
\[
\arctan \alpha(s/\sqrt n,a)=\arctan a-\arctan (s/\sqrt n),
\]
and \eqref{e1.1}, we find that 
\begin{align*}
  \int_a^b \frac{\sqrt ndx}{1+x^2} \int_{\sqrt n\alpha(x,a)}^{0} \frac{F_n(s/\sqrt n)}{1+s^2/n}ds&=\int_{\sqrt n\alpha(b,a)}^0\frac{F_n(s/\sqrt n)}{1+s^2/n}ds\int_{\alpha(s/\sqrt n,a)}^b \frac{\sqrt ndx}{1+x^2} \\
    &=\frac{\pi^2}{2} K_n(a,b)\mathbb E[N_n(a,b)]-\frac{\pi^2}{2} L_n(a,b).
\end{align*}
Similarly, 
\begin{align*}
    \int_a^b \frac{\sqrt n dx}{1+x^2} \int_0^{\sqrt n\alpha(x,b)} \frac{F_n(s/\sqrt n)}{1+s^2/n}ds=\frac{\pi^2}{2} K_n(a,b)\mathbb E[N_n(a,b)]-\frac{\pi^2}{2} L_n(a,b).
\end{align*}
Combining these we obtain \eqref{e2.9} as required.
\end{proof}
\begin{lemma}
Equation \eqref{e1.5} follows from the fact that
\begin{equation}
    \label{e2.10}
    I_{n,2}(\mathbb R)= K_n \sqrt n.
\end{equation}
\end{lemma}
\begin{proof}
Starting from \eqref{e2.8}, we have
\begin{equation}
    \label{e2.11}
    I_{n,2}(\mathbb R)=\frac{1}{\pi^2}\int_{-\infty}^\infty dx\int_{-\infty}^\infty \frac{ nF_n(\alpha(x,y))}{(1+x^2)(1+y^2)}dy.
\end{equation}
Fix $x\in (-\infty,0)$ and substitute $s=\sqrt n\alpha(x,y)$, we see that
\begin{align*}
\frac{1}{\pi^2}\int_{-\infty}^0dx \int_{-\infty}^\infty \frac{nF_n(\alpha(x,y))}{(1+x^2)(1+y^2)}dy=\frac{1}{\pi^2}\int_{-\infty}^0\frac{\sqrt n}{1+x^2}dx\int_{-\infty}^{\infty}\frac{F_n(s/\sqrt n)}{1+s^2/n}ds=\frac{\sqrt n}{2}K_n.
\end{align*}
Similarly, 
\[
\frac{1}{\pi^2}\int_0^{\infty}dx\int_{-\infty}^\infty \frac{nF_n(\alpha(x,y))}{(1+x^2)(1+y^2)}dy= \frac{\sqrt n}{2}K_n.
\]
Substituting these results into \eqref{e2.11} yields \eqref{e2.10} as claimed.
\end{proof}
\begin{lemma}
If $\alpha(a,b)<0$, then 
\begin{equation}
    \label{e2.12}
   I_{n,2}(a,b)=K_n\mathbb E[N_n(a,b)] +\left(K_n-K_n(a,b)\right)\left(\mathbb E[N_n(a,b)]-\sqrt n\right)-L_n(a,b),
\end{equation}
which implies \eqref{e1.6} when combined with \eqref{e2.7}. 
\end{lemma}
\begin{proof} 
If $\alpha(a,b)<0$, then $ab<-1$ and either $a$ or $b$ is finite.  Thus, $a<-1/b<-1/a<b$ and the equation $1+xy=0$ has a solution $(x,-1/x)$ only if $x\in (a,-1/b)\cup (-1/a,b)$. Write
\begin{align*}
    I_{n,2}(a,b)&=\frac{1}{\pi^2}\int_a^{-1/b} \frac{\sqrt n dx}{1+x^2}\int_{a}^{-1/x} \frac{\sqrt nF_n(\alpha(x,y))}{1+y^2}dy\\
    &+\frac{1}{\pi^2}\int_a^{-1/b} \frac{\sqrt n dx}{1+x^2} \int_{-1/x}^{b}\frac{\sqrt nF_n(\alpha(x,y))}{1+y^2}dy\\
    &+\frac{1}{\pi^2}\int_{-1/b}^{-1/a} \frac{\sqrt n dx}{1+x^2}\int_a^b \frac{\sqrt nF_n(\alpha(x,y))}{1+y^2}dy\\
    &+\frac{1}{\pi^2}\int_{-1/a}^{b} \frac{\sqrt n dx}{1+x^2} \int_{a}^{-1/x} \frac{\sqrt nF_n(\alpha(x,y))}{1+y^2}dy\\
    &+\frac{1}{\pi^2}\int_{-1/a}^{b} \frac{\sqrt n dx}{1+x^2}\int_{-1/x}^{b}\frac{\sqrt nF_n(\alpha(x,y))}{1+y^2}dy.
\end{align*}
We now proceed analogously to the proof of Lemma \ref{L2.3}. 
Using the substitution $s=\sqrt n\alpha(x,y)$, Fubini's theorem, and the facts that
\[
\arctan \alpha(x,y)=\begin{cases}
\arctan y-\arctan x&\text{if}\quad 1+xy>0,\\
\arctan y-\arctan x-\pi&\text{if}\quad 1+xy<0 \text{ and } y>0,\\
\arctan y-\arctan x+\pi&\text{if}\quad 1+xy<0 \text{ and } y<0,
\end{cases}
\]
and 
\[
\arctan x+\arctan(1/x)=\begin{cases}
+\pi/2&\text{if}\quad x>0,\\
-\pi/2&\text{if}\quad x<0,
\end{cases}
\]
we conclude that 
\begin{align*}
    I_{n,2}(a,b)&=\frac 12(K_n+K_n(-1/a,b))(\E[N_n(a,b)]-\sqrt n/2)-\frac 12 L_n(-1/a,b)\\
    &+\frac 12 (K_n-K_n(a,b))(\E[N_n(a,b)]-\sqrt n)+\frac 12(L_n-L_n(a,b))\\
    &+\frac{\sqrt n}{2} K_n -K_n(-1/a,b)(\E[ N_n(a,b)]-\sqrt n/2)-L_n+L_n(-1/a,b)\\
    &+\frac 12 (K_n-K_n(a,b))(\E[N_n(a,b)]-\sqrt n)+\frac 12(L_n-L_n(a,b))\\
    &+\frac 12(K_n+K_n(-1/a,b))(\E[N_n(a,b)]-\sqrt n/2)-\frac 12 L_n(-1/a,b)\\
    &=K_n\E[N_n(a,b)]+(K_n-K_n(a,b))(\E[N_n(a,b)]-\sqrt n)-L_n(a,b),
 \end{align*}
which gives \eqref{e2.12}.
\end{proof}
\subsection{Proof of Theorem \ref{T1.2}} \label{S2.3} 
Before proving Theorem \ref{T1.2}, let us introduce some lemmas that will be crucial to the proof. Notably, Theorem \ref{T1.1} enables us to derive the large $n$ expansion of $\Var[N_n(a,b)]$ using the expansions of $K_n(a,b)$ and $L_n(a,b)$. In order to expand $K_n(a,b)$ and $L_n(a,b)$, we initially demonstrate that the functions $\frac{F_n(s/\sqrt n)}{1+s^2/n}$ and $\frac{F_n(s/\sqrt n)}{1+s^2/n}\sqrt n\arctan(s/\sqrt n)$ can be transformed into series consisting of terms that are powers of $1/n$.
\begin{lemma}\label{L2.6} Given  $0<c_n<\sqrt n$, one has 
\begin{equation}
\label{e2.13}
    \frac{F_n(s/\sqrt n)}{1+s^2/n}=\sum_{k=0}^\infty \frac{f_k(s)}{n^k} \quad \text{uniformly for}\; s\in [0,c_n],
\end{equation}
where $f_k(s)$ have continuous extensions to $[0,\infty)$ such that, as $s\to 0$,
\begin{equation}
    \label{e2.14}
f_k(s)= \begin{cases}
-1+\frac{\pi}{4} s+O(s^3)&\text{if}\quad k=0,\\
-\frac{\pi}{4} s+s^2+O(s^3)&\text{if}\quad k=1,\\
O(s^3)&\text{if}\quad k\ge 2,
\end{cases}
\end{equation}
and, as $s\to \infty$,
\begin{equation}
    \label{e2.15}
f_k(s)=\frac{1}{2^{k+1}k!}s^{4k+4}e^{-s^2}+ O(s^{4k+2}e^{-s^2}),\quad k\ge 0.
\end{equation}
\end{lemma}
\begin{proof}
The proof will be divided into four steps. 
\begin{step} 
Expand $\Delta_n(s/\sqrt n)$.
\end{step}
Observe that 
\[
 \Delta_n(s/\sqrt n)=\left(1+\frac{s^2}{n}\right)^{-n/2}\frac{(1+s^2/n)[1-(1+s^2/n)^{-n}]-s^2}{1-(1+s^2)(1+s^2/n)^{-n}}.
\]
If $s\in [0,c_n]$, then $s^2/n\le c_n^2/n<1$. Hence, for any $c>0$, 
\[
-cn\log(1+s^2/n)=-cs^2+c\sum_{k=1}^\infty \frac{q_k(s)}{k!}\frac{1}{n^k} \quad \text{uniformly for $s\in [0,c_n]$},
\]
in which $q_k(s)=(-s^2)^{k+1}k!/(k+1)$. But then
\begin{equation}
\label{e2.16}
    \left(1+\frac{s^2}{n}\right)^{-cn}=e^{-cs^2}\left(1+\sum_{k=1}^\infty \frac{e_{c,k}(s)}{n^k}\right),
\end{equation}
where 
\begin{equation}
    \label{e2.17}
    e_{c,k}(s)=\frac{1}{k!}\sum_{j=1}^k c^jB_{k,j}(q_1(s),...,q_{k-j+1}(s))
\end{equation}
with $B_{k,j}$ denoting the exponential partial Bell polynomials (see \cite{C}*{\S3.3}). Explicit formulas for these polynomials are as follows
\begin{equation}
    \label{e2.18}
B_{k,j}(q_1(s),...,q_{k-j+1}(s))=\sum \frac{k!}{m_1!\cdots m_{k-j+1}!}\prod_{r=1}^{k-j+1}\left(\frac{(-s^2)^{r+1}}{r+1}\right)^{m_r},
\end{equation}
where the sum is over all solutions in non-negative integers of the equations
\begin{align*}
    m_1+2m_2+\cdots+(k-j+1)m_{k-j+1}&=k,\\
    m_1+m_2+\cdots+m_{k-j+1}&=j.
\end{align*} 
Combining \eqref{e2.17} with \eqref{e2.18} yields 
\begin{equation}
    \label{e2.19}
e_{c,k}(s)= \begin{cases}
\frac{c(-1)^{k+1}}{k+1}s^{2k+2}+O(s^{2k+4})&\text{as}\quad s\to 0,\\
\frac{c^k}{k!2^k}s^{4k}+O(s^{4k-2})&\text{as}\quad s\to \infty.
\end{cases}
\end{equation}
With \eqref{e2.16} and a bit of work, we can write
\[
\left(1+\frac{s^2}{n}\right)^{-n/2}\left[(1+s^2/n)[1-(1+s^2/n)^{-n}]-s^2\right]=\sum_{k=0}^\infty \frac{u_k(s)}{n^k},
\]
in which 
\begin{align*}
u_0(s)=e^{-s^2/2}\left(1-s^2-e^{-s^2}\right),\quad 
u_1(s)=e^{-s^2/2}\left[s^2+\frac{s^4}{4}-\frac{s^6}{4}-e^{-s^2}\left(s^2+\frac{3s^4}{4}\right)\right],
\end{align*}
and, for $k\ge 2$, 
\begin{equation*}
u_k(s)=e^{-s^2/2}\left[s^2e_{1/2,k-1}(s)+(1-s^2)e_{1/2,k}(s)\right]-e^{-3s^2/2}\left[e_{3/2,k}(s)+s^2e_{3/2,k-1}(s)\right].
\end{equation*}
Notice that
\begin{align}
\label{e2.20}    u_0(s)&= \begin{cases}
-\frac{1}{2}s^4+O(s^6)&\text{as}\quad s\to 0,\\
-s^2e^{-s^2/2}+O(e^{-s^2/2})&\text{as}\quad s\to \infty,
\end{cases} \\
\label{e2.21} u_1(s)&= \begin{cases}
\frac{1}{2}s^4+O(s^6)&\text{as}\quad s\to 0,\\
-\frac{1}{4}s^6e^{-s^2/2}+O(s^4e^{-s^2/2})&\text{as}\quad s\to \infty,
\end{cases}
\end{align}
and, by \eqref{e2.19}, for $k\ge 2$,
\begin{equation}
    \label{e2.22}
u_k(s)=\begin{cases}
O(s^{2k+2})&\text{as}\quad s\to 0,\\
-\frac{1}{4^kk!}s^{4k+2}e^{-s^2/2}+O(s^{4k}e^{-s^2/2})&\text{as}\quad s\to \infty.
\end{cases}
\end{equation}
For $s\in (0,c_n]$, one has $0<(1+s^2)(1+s^2/n)^{-n}<1$, and so
\begin{align*}
    \frac{1}{1-(1+s^2)(1+s^2/n)^{-n}}&=1+\sum_{m=1}^\infty (1+s^2)^m\left(1+\frac{s^2}{n}\right)^{-mn}\\
    &=1+\sum_{m=1}^\infty (1+s^2)^me^{-ms^2}\left(1+\sum_{k=1}^\infty \frac{e_{m,k}(s)}{n^k}\right)\\
    &=: v_0(s)+\sum_{k=1}^\infty \frac{v_k(s)}{n^k}.
\end{align*}
Clearly, 
\[
v_0(s)=1+\sum_{m=1}^\infty(1+s^2)^me^{-ms^2}=\frac{1}{1-(1+s^2)e^{-s^2}},
\]
which gives 
\begin{equation}
    \label{e2.23}
    v_0(s)= \begin{cases}
2s^{-4}+O(s^{-2})&\text{as}\quad s\to 0,\\
1+O(s^2e^{-s^2})&\text{as}\quad s\to \infty.
\end{cases}
\end{equation}
For $k\ge 1$, $v_k(s)$ can be expressed in terms of the polylogarithm functions (see \cite{L}) defined by 
\[
\Li_{j}(z):=\sum_{m=1}^\infty \frac{z^m}{m^j}.
\]
Indeed, by definition of $v_k(s)$ and \eqref{e2.17}, 
\begin{align*}
 v_k(s)&=\sum_{m=1}^\infty(1+s^2)^me^{-ms^2}e_{m,k}(s)\\
 &= \sum_{m=1}^\infty(1+s^2)^me^{-ms^2}\frac{1}{k!}\sum_{j=1}^k m^jB_{k,j}(q_1(s),...,q_{k-j+1}(s))\\
 &=\frac{1}{k!}\sum_{j=1}^kB_{k,j}(q_1(s),...,q_{k-j+1}(s))\Li_{-j}((1+s^2)e^{-s^2}).
\end{align*}
In particular, 
\[
v_1(s)=B_{1,1}(q_1(s))\Li_{-1}((1+s^2)e^{-s^2})=\frac{s^4}{2}\frac{(1+s^2)e^{-s^2}}{(1-(1+s^2)e^{-s^2})^2}.
\]
Since, for $1\le j\le k$, 
\[
\Li_{-j}((1+s^2)e^{-s^2})\sim \begin{cases}
j!2^{j+1}s^{-4(j+1)}&\text{as}\quad s\to 0,\\
(1+s^2)e^{-s^2}&\text{as}\quad s\to \infty,
\end{cases}
\]
it follows that 
\begin{equation}
    \label{e2.24}
v_k(s)= \begin{cases}
2s^{-4}+O(s^{-2})&\text{as}\quad s\to 0,\\
\frac{1}{2^kk!}s^{4k+2}e^{-s^2}+O(s^{4k}e^{-s^2})&\text{as}\quad s\to \infty.
\end{cases}
\end{equation}
Next, by the Cauchy product, for $s\in (0,c_n]$,
\[
\left(\sum_{k=0}^\infty \frac{u_k(s)}{n^k}\right)\left(\sum_{k=0}^\infty \frac{v_k(s)}{n^k}\right)=\sum_{k=0}^\infty \frac{\delta_k(s)}{n^k},
\]
where 
\[
\delta_k(s):=\sum_{j=0}^ku_j(s)v_{k-j}(s),\quad k\ge 0.
\]
In particular, 
\begin{align*}
    \delta_0(s)&=\frac{e^{-s^2/2}(1-s^2-e^{-s^2})}{1-(1+s^2)e^{-s^2}},\\
    \delta_1(s)&=\frac{s^4e^{-s^2/2}}{2}\frac{(1-s^2-e^{-s^2})(1+s^2)e^{-s^2}}{(1-(1+s^2)e^{-s^2})^2}\\
    &+\frac{e^{-s^2/2}}{1-(1+s^2)e^{-s^2}}
    \left[s^2+\frac{s^4}{4}-\frac{s^6}{4}-e^{-s^2}\left(s^2+\frac{3s^4}{4}\right)\right].
\end{align*}   
We check at once that 
\begin{equation}
\label{e2.25}
\delta_0(s)= \begin{cases}
-1+O(s^2)&\text{as}\quad s\to 0,\\
-s^2e^{-s^2/2}+O(e^{-s^2/2})&\text{as}\quad s\to \infty.
\end{cases}
\end{equation}
Using \eqref{e2.22}, \eqref{e2.23}, and \eqref{e2.24}, we see that, for $k\ge 1$, 
\[
\delta_k(s)=\begin{cases} u_0(s)v_k(s)+u_1(s)v_{k-1}(s)+O(s^2)&\text{as}\quad s\to 0,\\
u_k(s)v_0(s)+O(s^{4k+4}e^{-3s^2/2})&\text{as}\quad s\to \infty.
\end{cases}
\]
Together with \eqref{e2.20} and \eqref{e2.21}, we arrive at
\begin{equation}
    \label{e2.26}
\delta_k(s)= \begin{cases}
O(s^2)&\text{as}\quad s\to 0,\\
-\frac{1}{4^kk!}s^{4k+2}e^{-s^2/2}+O(s^{4k+4}e^{-3s^2/2})&\text{as}\quad s\to \infty.
\end{cases}
\end{equation}
This implies that the functions $\delta_k(s)$ extend by continuity at $s=0$. Hence, 
\begin{equation}
    \label{e2.27}
    \Delta_n(s/\sqrt n)=\sum_{k=0}^\infty \frac{\delta_k(s)}{n^k} \quad \text{uniformly for}\; s\in [0,c_n].
\end{equation}
\begin{step}
Expand $h(\Delta_n(s/\sqrt n))$, where $h(x):=\sqrt{1-x^2}+x\arcsin x$.
\end{step}
For $s>0$, we see that $-1<\Delta_n(s/\sqrt n)<1$. Thus, by \eqref{e2.27} and Fa\`{a} di Bruno’s formula (see \cite{C}*{\S3.4}),
\begin{align*}
    h(\Delta_n(s/\sqrt n))&=h(0)+\sum_{m=1}^\infty \frac{h^{(m)}(0)}{m!}(\Delta_n(s/\sqrt n))^m\\
    &=1+\sum_{m=1}^\infty \frac{h^{(m)}(0)}{m!}\left(\sum_{k=0}^\infty \frac{\delta_k(s)}{n^k}\right)^m=:z_0(s)+\sum_{k=1}^\infty \frac{z_k(s)}{n^k},
\end{align*}
where 
\[
z_0(s)=1+\sum_{m=1}^\infty \frac{h^{(m)}(0)}{m!}\delta^m_{0}(s)=h(\delta_0(s)),
\]
and, for $k\ge 1$, 
\begin{align*}
    z_k(s)=\frac{1}{k!}\sum_{j=1}^kh^{(j)}(\delta_0(s))B_{k,j}(1! \delta_1(s),...,(k-j+1)!\delta_{k-j+1}(s)).
\end{align*}
In particular, 
\[
z_1(s)=B_{1,1}(\delta_1(s))h'(\delta_0(s))=\delta_1(s) \arcsin(\delta_0(s)).
\]
By \eqref{e2.25} and the asymptotic behaviors of $h(x)$ as $x\to 0$ and as $x\to -1^+$, 
\begin{equation}
    \label{e2.28}
    z_0(s)= \begin{cases}
\pi/2+O(s^2)&\text{as}\quad s\to 0,\\
1+\frac 12 s^4e^{-s^2}+O(s^2e^{-s^2})&\text{as}\quad s\to \infty.
\end{cases}
\end{equation}
Note that $h'(x)=\arcsin x$, so
\[
   h'(x)=\begin{cases}
O(1)&\text{as}\quad x\to -1^+,\\
x+O(x^3)&\text{as}\quad x\to 0,
\end{cases}
\]
and, for $j\ge 2$, 
\[
 h^{(j)}(x)=\begin{cases}
O((1-x^2)^{(3-2j)/2})&\text{as}\quad x\to -1^+,\\
\frac{1+(-1)^{j}}{2}+O(x^2)&\text{as}\quad x\to 0.
\end{cases}
\]
Together with \eqref{e2.25}, we see  that
\[
h'(\delta_0(s))=\begin{cases}
O(1)&\text{as}\quad s\to 0,\\
-s^2e^{-s^2/2}+O(s^6e^{-3s^2/2})&\text{as}\quad s\to \infty,
\end{cases}
\]
and, for $j\ge 2$, 
\[
h^{(j)}(\delta_0(s))=\begin{cases}
O(s^{3-2j})&\text{as}\quad s\to 0,\\
\frac{1+(-1)^{j}}{2}+O(s^4e^{-s^2})&\text{as}\quad s\to \infty.
\end{cases}
\]
Thus, using \eqref{e2.26} and the fact that $1+B_{k,2}(1,...,1)=2^{k-1},$ we get
\begin{equation}
    \label{e2.29}
z_k(s)= \begin{cases}
O(s^2)&\text{as}\quad s\to 0,\\
\frac{1}{2^{k+1}k!}s^{4k+4}e^{-s^2}+O(s^{4k+2}e^{-s^2})&\text{as}\quad s\to \infty.
\end{cases}
\end{equation}
Summarizing, we have 
\begin{equation}
    \label{e2.30}
    h(\Delta_n(s/\sqrt n))=\sum_{k=0}^\infty \frac{z_k(s)}{n^k}\quad \text{uniformly for}\; s\in [0,c_n].
\end{equation}
\begin{step}
Expand $\Gamma_n(s/\sqrt n)$.
\end{step}
For this purpose, let us consider the function $x\mapsto g_s(x)$ given by
\[
g_s(x)=\frac{1-(1+s^2)x}{(1-x)^{3/2}},\quad x\in (-1,1).
\]
For $s>0$, we have $0<(1+s^2/n)^{-n}<1$ and 
\[
\Gamma_n(s/\sqrt n)=\frac{1-(1+s^2)(1+s^2/n)^{-n}}{[1-(1+s^2/n)^{-n}]^{3/2}}=g_s\left((1+s^2/n)^{-n}\right).
\]
Therefore, 
\begin{align*}
    \Gamma_n(s/\sqrt n)&=g_s(0)+\sum_{m=1}^\infty \frac{g_s^{(m)}(0)}{m!}\left(1+\frac{s^2}{n}\right)^{-mn}\\
    &=1+\sum_{m=1}^\infty \frac{g_s^{(m)}(0)}{m!}e^{-ms^2}\sum_{k=0}^\infty \frac{e_{m,k}(s)}{n^k}\\
    &=1+\sum_{m=1}^\infty  \frac{g_s^{(m)}(0)}{m!}e^{-ms^2}+\sum_{k=1}^\infty \left(\sum_{m=1}^\infty  \frac{g_s^{(m)}(0)}{m!}e^{-ms^2}e_{m,k}(s)\right)\frac{1}{n^k}\\
    &=:\gamma_0(s)+\sum_{k=1}^\infty \frac{\gamma_k(s)}{n^k}.
\end{align*}
In particular, 
\begin{align*}
\gamma_0(s)&=1+\sum_{m=1}^\infty  \frac{g_s^{(m)}(0)}{m!}e^{-ms^2}=g_s(e^{-s^2})=\frac{1-(1+s^2)e^{-s^2}}{(1-e^{-s^2})^{3/2}}.
\end{align*}
To determine $\gamma_k(s)$, for $k\ge 1$, we utilize the following identity (see \cite{C}*{\S5.1}), for $m\ge 1$ and $1\le j\le k$,
\[
m^j=\sum_{r=1}^jS(j,r)(m)_r,
\]
where $S(j,r)$ are the Stirling numbers of the second kind, and $(m)_r$ are the falling factorials defined by $(m)_r=m(m-1)\cdots (m-r+1)$. This implies
\begin{align*}
 \gamma_k(s)&=\sum_{m=1}^\infty  \frac{g_s^{(m)}(0)}{m!}e^{-ms^2}e_{m,k}(s)\\
  &=\sum_{m=1}^\infty  \frac{g_s^{(m)}(0)}{m!}e^{-ms^2}\frac{1}{k!}\sum_{j=1}^k m^j B_{k,j}(q_1(s),...,q_{k-j+1}(s))\\
   &=\frac{1}{k!}\sum_{j=1}^k B_{k,j}(q_1(s),...,q_{k-j+1}(s))\sum_{r=1}^jS(j,r)e^{-rs^2}g_s^{(r)}(e^{-s^2}).
\end{align*}
In particular, 
\begin{align*}
    \gamma_1(s)&=B_{1,1}(q_1(s))S(1,1)e^{-s^2}g_s'(e^{-s^2})=\frac{s^4e^{-s^2}(1-2s^2-(1+s^2)e^{-s^2})}{4(1-e^{-s^2})^{5/2}}.
\end{align*}
A trivial verification shows that 
\begin{align}
 \label{e2.31}    \gamma_0(s)&=\begin{cases} \frac 12 s+O(s^3)&\text{as}\quad s\to 0,\\
    1-s^2e^{-s^2}+O(e^{-s^2})&\text{as}\quad s\to \infty,
    \end{cases}\\
  \label{e2.32}   \gamma_1(s)&=\begin{cases} -\frac 12 s+O(s^3)&\text{as}\quad s\to 0,\\
    -\frac 12 s^6e^{-s^2}+O(s^4e^{-s^2})&\text{as}\quad s\to \infty.
    \end{cases}
\end{align}
Notice that 
\begin{align*}
    g_s(e^{-s^2})&=\frac{1-(1+s^2)e^{-s^2}}{(1-e^{-s^2})^{3/2}}= \begin{cases} \frac 12 s+O(s^3)&\text{as}\quad s\to 0,\\
    1+O(s^2e^{-s^2})&\text{as}\quad s\to \infty,
    \end{cases}\\
    g_s^{(r)}(e^{-s^2})&=\frac{2^r}{(2r+1)!!}\frac{1-(1+s^2)e^{-s^2}}{(1-e^{-s^2})^{(2r+3)/2}}-\frac{r2^{r-1}}{(2r-1)!!}\frac{1+s^2}{(1-e^{-s^2})^{(2r+1)/2}}\\
    &= \begin{cases} -\frac{r2^{r-1}}{(2r-1)!!}s^{-(2r+1)}+O(s^{-(2r-1)})&\text{as}\quad s\to 0,\\
    -\frac{r2^{r-1}}{(2r-1)!!}s^2+O(1)&\text{as}\quad s\to \infty.
    \end{cases}
\end{align*}
Combining with \eqref{e2.18} yields, for $k\ge 2$, 
\begin{equation}
     \label{e2.33}  
 \gamma_k(s)=\begin{cases} O(s^{2k-1})&\text{as}\quad s\to 0,\\
    -\frac{1}{2^kk!}s^{4k+2}e^{-s^2}+O(s^{4k}e^{-s^2})&\text{as}\quad s\to \infty.
    \end{cases}    
\end{equation}
Therefore, 
\begin{equation}
    \label{e2.34}
    \Gamma_n(s/\sqrt n)=\sum_{k=0}^\infty \frac{\gamma_k(s)}{n^k}\quad \text{uniformly for}\; s\in [0,c_n].
\end{equation}
\begin{step}
Expand $\frac{F_n(s/\sqrt n)}{1+s^2/n}$.
\end{step}
Combining \eqref{e2.30} with \eqref{e2.34}, we have
\begin{equation}
    \label{e2.35}
F_n(s/\sqrt n)=\left(\sum_{k=0}^\infty \frac{z_k(s)}{n^k}\right)\left(\sum_{k=0}^\infty \frac{\gamma_k(s)}{n^k}\right)-1=\sum_{k=0}^\infty \frac{a_k(s)}{n^k}
\end{equation}
uniformly for $s\in [0,c_n]$, 
in which 
\begin{align*}
    a_0(s)=z_0(s)\gamma_0(s)-1 \quad \text{and}\quad 
    a_k(s)=\sum_{j=0}^kz_j(s)\gamma_{k-j}(s),\quad k\ge 1.
\end{align*}
On account of \eqref{e2.28}, \eqref{e2.29}, \eqref{e2.31}, \eqref{e2.32}, and \eqref{e2.33}, we have 
\begin{align*}
 a_0(s)&=\begin{cases} -1+\frac{\pi}{4} s+O(s^3)&\text{as}\quad s\to 0,\\
    \frac 12 s^4e^{-s^2}+O(s^2e^{-s^2})&\text{as}\quad s\to \infty,
    \end{cases}\\
 a_1(s)&=\begin{cases} -\frac{\pi}{4} s+O(s^3)&\text{as}\quad s\to 0,\\
    \frac 14 s^8e^{-s^2}+O(s^6e^{-s^2})&\text{as}\quad s\to \infty,
    \end{cases}
 \end{align*}
 and, for $k\ge 2$,
 \[
 a_k(s)=\begin{cases} O(s^3)&\text{as}\quad s\to 0,\\
    \frac{1}{2^{k+1}k!}s^{4k+4}e^{-s^2}+O(s^{4k+2}e^{-s^2})&\text{as}\quad s\to \infty.
    \end{cases}
\]
Since
\[
\frac{1}{1+s^2/n}=\sum_{k=0}^{\infty}\frac{(-s^2)^k}{n^k},
\]
it follows from \eqref{e2.35} that
\[
\frac{F_n(s/\sqrt n)}{1+s^2/n}=\left(\sum_{k=0}^\infty \frac{a_k(s)}{n^k}\right)\left(\sum_{k=0}^\infty \frac{(-s^2)^k}{n^k}\right)=\sum_{k=0}^\infty \frac{f_k(s)}{n^k}\] 
uniformly for $s\in [0,c_n]$,
where 
\[
f_k(s):=\sum_{j=0}^k(-1)^js^{2j}a_{k-j}(s),\quad k\ge 0.
\]
As shown above, $f_k(s)$ have continuous extensions to $[0,\infty)$ such that 
\begin{align*}
    f_0(s)&=\begin{cases} -1+\frac{\pi}{4} s+O(s^3)&\text{as}\quad s\to 0,\\
     \frac 12 s^4e^{-s^2}+O(s^2e^{-s^2})&\text{as}\quad s\to \infty,
    \end{cases}\\
    f_1(s)&=\begin{cases} -\frac{\pi}{4}s+s^2+O(s^3)&\text{as}\quad s\to 0,\\
     \frac 14 s^8e^{-s^2}+O(s^6e^{-s^2})&\text{as}\quad s\to \infty,
    \end{cases}
\end{align*}
and, for $k\ge 2$, 
\[
f_k(s)=\begin{cases} O(s^3)&\text{as}\quad s\to 0,\\
  \frac{1}{2^{k+1}k!}s^{4k+4}e^{-s^2}+O(s^{4k+2}e^{-s^2})&\text{as}\quad s\to \infty.
    \end{cases}
\]
Thus, Lemma \ref{L2.6} is verified. 
\end{proof}
Recall that 
\[
f_0(s)=h(\delta_0(s))\gamma_0(s)-1
\]
and 
\[
f_1(s)=h(\delta_0(s))\gamma_1(s)+\delta_1(s)\arcsin(\delta_0(s))\gamma_0(s)-s^2f_0(s),
\]
where explicit formulas for $h(x)$, $\delta_0(s)$, $\delta_1(s)$, $\gamma_0(s)$, and $\gamma_1(s)$ are provided. This means that one can also obtain explicit formulas for both $f_0(s)$ and $f_1(s)$. In Figure \ref{F1}, we show plots of $f_0(s)$ and $f_1(s)$ for $s\in [0,5]$.
\begin{figure}[ht]
\centering
\includegraphics[scale = 1]{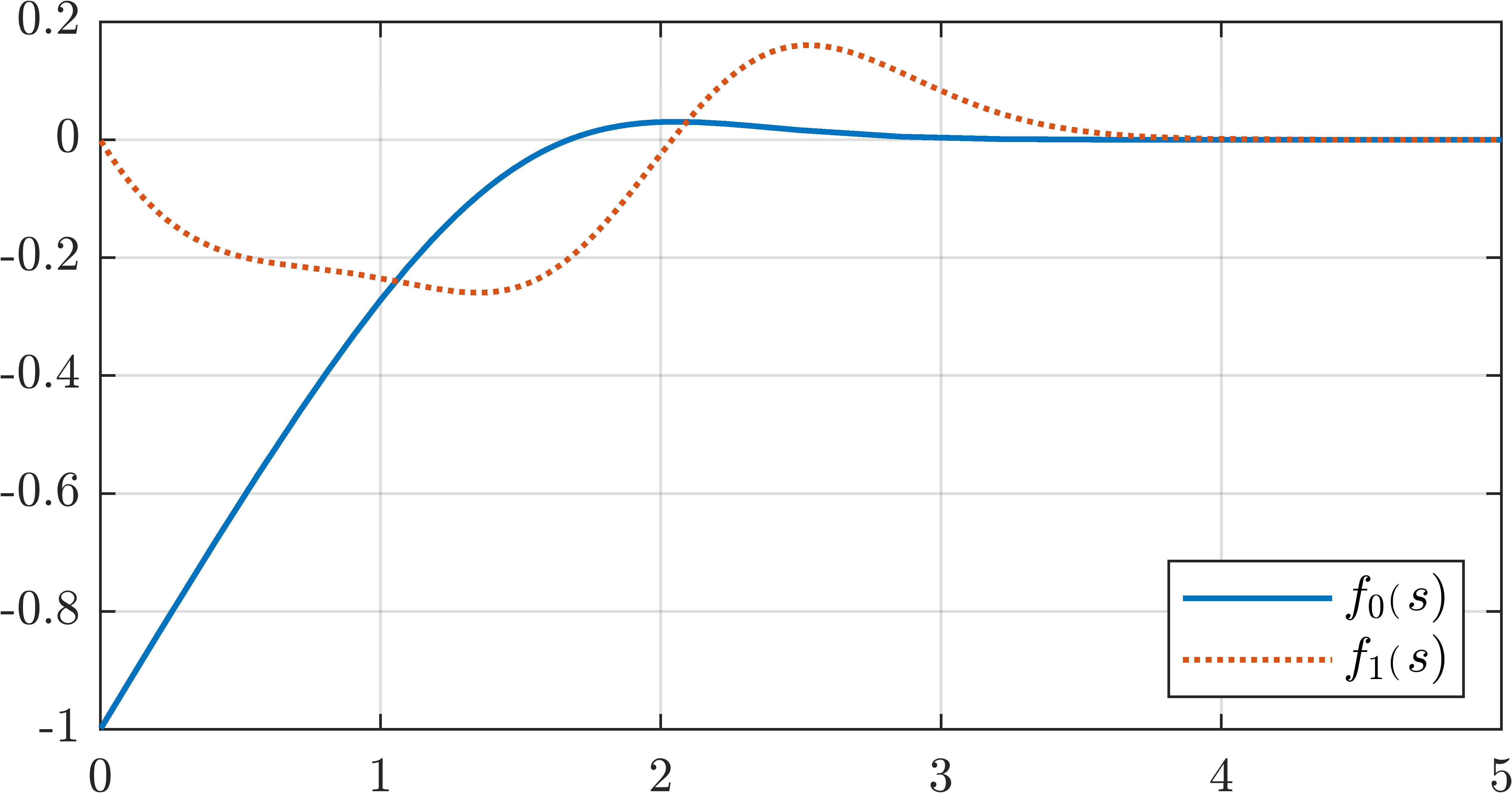}
\caption{Plots of $f_0(s)$ and $f_1(s)$.}
\label{F1}
\end{figure}

Note that, for $-1\le s/\sqrt n \le 1$,
\[
\sqrt n \arctan(s/\sqrt n)=\sum_{k=0}^\infty \frac{(-1)^ks^{2k+1}}{2k+1}\frac{1}{n^k}.
\]
Together with Lemma \ref{L2.6}, we obtain the following lemma. 
\begin{lemma}\label{L2.7}
Given $0<c_n<\sqrt n$, one has
\begin{equation}
\label{e2.36}
   \frac{F_n(s/\sqrt n)}{1+s^2/n} \sqrt n \arctan(s/\sqrt n)=\sum_{k=0}^\infty \frac{g_k(s)}{n^k} \quad \text{uniformly for}\; s\in [0,c_n],
\end{equation}
where
\[
g_k(s)=\sum_{j=0}^k\frac{(-1)^js^{2j+1}}{2j+1}f_{k-j}(s),\quad k\ge 0.
\]
Furthermore, $g_k(s)$ have continuous extensions to $[0,\infty)$ such that, as $s\to 0$, 
\begin{align}
    \label{e2.37}
g_k(s)&=\begin{cases}
-s+\frac{\pi}{4} s^2+O(s^4)&\text{if}\quad k=0,\\
-\frac{\pi}{4} s^2+\frac 43 s^3+O(s^4)&\text{if}\quad k=1,\\
O(s^4)&\text{if}\quad k\ge 2,
\end{cases}
\end{align}
and, as $s\to \infty$,
\begin{equation}
    \label{e2.38}
g_k(s)=\frac{1}{2^{k+1}k!}s^{4k+5}e^{-s^2}+ O(s^{4k+3}e^{-s^2}),\quad k\ge 0.
\end{equation}
\end{lemma}
Note that explicit formulas for $g_0(s)$ and $g_1(s)$ can be obtained from 
$
g_0(s)=sf_0(s)
$
and 
$
g_1(s)=sf_1(s)-\frac{s^3}{3}f_0(s).
$
Plots of $g_0(s)$ and $g_1(s)$ for $s\in [0,5]$ are included in Figure \ref{F2}. 

    \begin{figure}[ht]
\centering
\includegraphics[scale = 1]{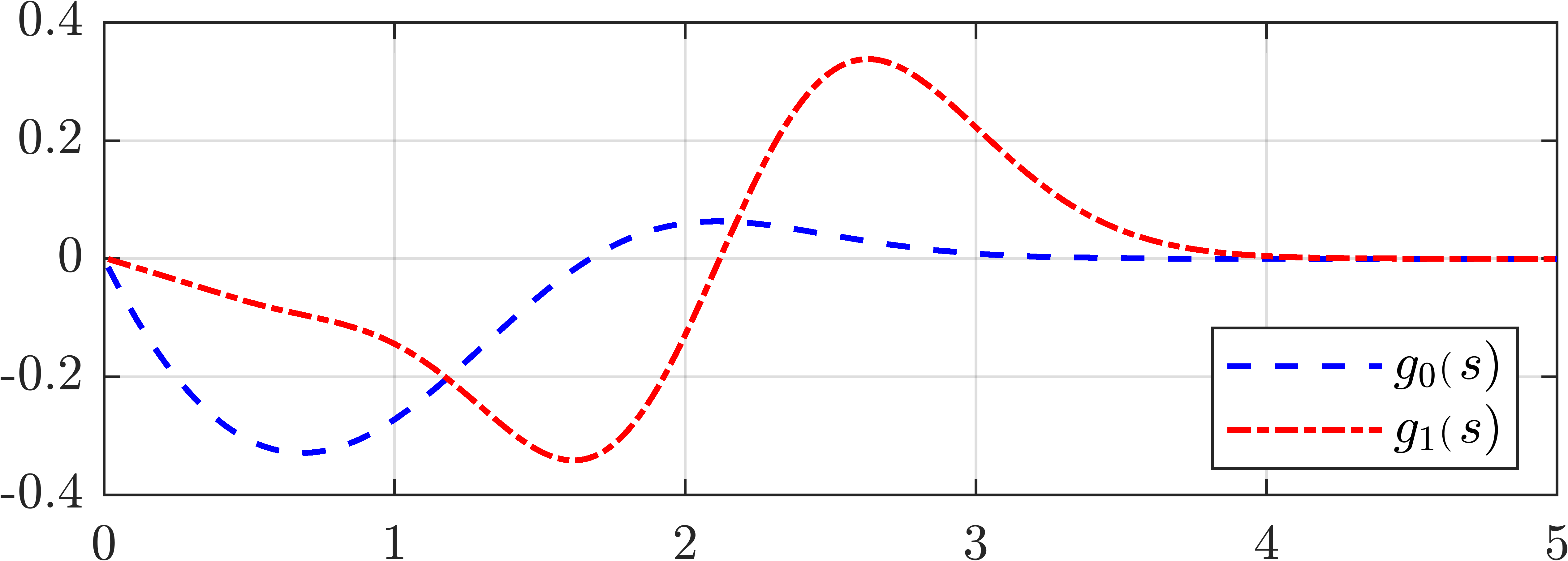}
\caption{Plots of $g_0(s)$ and $g_1(s)$.}
\label{F2}
\end{figure}

We can now define, for $k\ge 0$ and $c>0$, 
\begin{align*}
    &\kappa_k:=\frac{2}{\pi}\int_0^\infty f_k(s)ds, &\kappa_{c,k}:=\frac{2}{\pi}\int_0^c f_k(s)ds,\\
    &\ell_k:=\frac{2}{\pi^2}\int_0^\infty g_k(s)ds, &\ell_{c,k}:=\frac{2}{\pi^2}\int_0^c g_k(s)ds.
\end{align*}
The continuity of $f_k(s)$ and $g_k(s)$, along with the asymptotic behaviors given in \eqref{e2.14}, \eqref{e2.15}, \eqref{e2.37}, and \eqref{e2.38}, justifies the definitions of $\kappa_k, \ell_k, \kappa_{c,k}$, and $\ell_{c,k}$. Explicit formulas for $f_k(s)$ and $g_k(s)$ allow for the numerical computation of $\kappa_k, \ell_k, \kappa_{c,k}$, and $\ell_{c,k}$, for $k=0,1$, and $c>0$. Table \ref{tab1} lists some numerical values, where the integrals were numerically evaluated using MATLAB.

\begin{table}[ht]
    \caption{Numerical values of $\kappa_k, \ell_k$, $\kappa_{1,k}$, and $\ell_{1,k}$ for $k=0,1$.}
    \label{tab1}
    \centering
    \begin{tabular}{@{}lrrrr@{}}
     \hline 
     $k$& \multicolumn{1}{c}{$\kappa_k$}& \multicolumn{1}{c}{$\ell_k$}& \multicolumn{1}{c}{$\kappa_{1,k}$}& \multicolumn{1}{c}{$\ell_{1,k}$}\\
     \hline 
      $0$&$-0.4282689510$&$-0.0580365252$&$-0.3955313789$&$-0.0505415303$\\
      $1$&$-0.1522064957$&$-0.0082122652$&$-0.1093878905$&$-0.0138350833$\\
         \hline
    \end{tabular}
\end{table}

We emphasize that the expansions provided in \eqref{e2.13} and \eqref{e2.36} enable us to express $K_n(a,b)$ and $L_n(a,b)$ as series of terms that are powers of $1/n$, under the condition $|\alpha_n|<\sqrt n$. Our subsequent objective is to estimate $|K_n-K_n(a,b)|$ and $|L_n-L_n(a,b)|$ when $|\alpha_n|$ is arbitrarily large.
    \begin{lemma}\label{L2.8} Suppose $|\alpha_n|\ge 2$. There exists a positive constant $n_0$ such that for all $n\ge n_0$,
\begin{align}
    \label{e2.39}
    |K_n-K_n(a,b)|&\le \frac{8}{\pi}|\alpha_n|^3\left(1+\alpha_n^2/n\right)^{-n},\\
    \label{e2.40}
     |L_n-L_n(a,b)|&\le \frac{4}{\pi}\sqrt n|\alpha_n|^3\left(1+\alpha_n^2/n\right)^{-n}.
\end{align}
\end{lemma}
\begin{proof}
Observe that 
\begin{align*}
   |K_n-K_n(a,b)|&\le \frac{2}{\pi}\int_{|\alpha_n|}^\infty \frac{|F_n(s/\sqrt n)|}{1+s^2/n}ds,\\
   |L_n-L_n(a,b)|&\le  \frac{2}{\pi^2}\int_{|\alpha_n|}^\infty \frac{|F_n(s/\sqrt n)|}{1+s^2/n}\sqrt n\arctan(s/\sqrt n)ds.
\end{align*}
Since $0\le \arctan(s/\sqrt n)\le \frac{\pi}{2}$ for $s\in [|\alpha_n|,\infty)$, it follows that \eqref{e2.40} is a consequence of \eqref{e2.39}.

We now prove \eqref{e2.39}. Choose $n_0>0$ such that for $n\ge n_0$ and $s\ge 2$, we have 
\[
s^2(1+s^2/n)^{-n/2} \le \frac 14.
\]
Then, for $n\ge n_0$ and $s\ge 2$,
\[
|\Delta_n(s/\sqrt n)|=(1+s^2/n)^{-n/2}\frac{|(1+s^2/n)[1-(1+s^2/n)^{-n}]-s^2|}{1-(1+s^2/n)^{-n}-s^2(1+s^2/n)^{-n}} \le 2s^2(1+s^2/n)^{-n/2}.
\]
Let $s_n:=2s^2(1+s^2/n)^{-n/2}$. Then, $0\le s_n \le 1/2$ and 
\[
1\le h(x)=\sqrt{1-x^2}+x\arcsin x \le 1+\frac{x^2}{2\sqrt{1-s_n^2}},\quad x\in [-s_n,s_n].
\]
This gives
\[
1\le h(\Delta_n(s/\sqrt n)) \le 1+\frac{s_n^2}{2\sqrt{1-s_n^2}}\le 1+\frac{s_n^2}{\sqrt 3}.
\]
Together with 
\begin{align*}
    1-2s^2(1+s^2/n)^{-n}\le \Gamma_n(s/\sqrt n)&=\frac{1-(1+s^2/n)^{-n}-s^2(1+s^2/n)^{-n}}{[1-(1+s^2/n)^{-n}]^{3/2}}\\
    &\le \frac{1}{\sqrt{1-(1+s^2/n)^{-n}}} \le 1+(1+s^2/n)^{-n},
\end{align*}
we deduce that 
\begin{align*}
    |F_n(s/\sqrt n)|&=|h(\Delta_n(s/\sqrt n))\Gamma_n(s/\sqrt n)-1|\le s_n^2 = 4s^4(1+s^2/n)^{-n},
\end{align*}
hence, for $n\ge n_0$ and $s\ge 2$, 
\begin{equation}
    \label{e2.41}
    \frac{|F_n(s/\sqrt n)|}{1+s^2/n}\le 4s^4(1+s^2/n)^{-n-1}.
\end{equation}
 For $|\alpha_n|\ge 2$ and $n\ge n_0$, the function $s\mapsto s^3\left(1+s^2/n\right)^{-n/2}$ achieves its maximum value on $[|\alpha_n|,\infty)$ at $s=|\alpha_n|$, and therefore
\begin{align*}
\int_{|\alpha_n|}^\infty 4s^4\left(1+\frac{s^2}{n}\right)^{-n-1}ds &\le 4|\alpha_n|^3\left(1+\frac{\alpha_n^2}{n}\right)^{-n/2}\int_{|\alpha_n|}^\infty s\left(1+\frac{s^2}{n}\right)^{-n/2-1}ds\\
&=4|\alpha_n|^3\left(1+\frac{\alpha_n^2}{n}\right)^{-n}.
\end{align*}
Combining with \eqref{e2.41} yields \eqref{e2.39} as required.
\end{proof}
We can now formulate the asymptotic expansions of $K_n(a,b)$ and  $L_n(a,b)$.  
\begin{lemma}\label{L2.9} As $n\to \infty$, if $ \alpha_n^2/\log n \to \infty$, then 
\begin{align}
    \label{e2.42}
    K_n(a,b)&\sim\sum_{k=0}^{\infty}\frac{\kappa_k}{n^k},\\
   L_n(a,b)&\sim \sum_{k=0}^{\infty}\frac{\ell_k}{n^k}.
    \label{e2.43}
\end{align}
Consequently, as $n\to \infty$,
\begin{align}
    \label{e2.44}
    K_n&\sim\sum_{k=0}^{\infty}\frac{\kappa_k}{n^k},\\
   L_n&\sim \sum_{k=0}^{\infty}\frac{\ell_k}{n^k}.
    \label{e2.45}
\end{align}
\end{lemma}
\begin{proof} We first prove \eqref{e2.42}. 
If $\alpha_n^2\ge n$, one has 
\[
|\alpha_n|^3\left(1+\alpha_n^2/n\right)^{-n}\le n^{3/2}2^{-n}.
\]
But then \eqref{e2.39} implies that the integral $\frac{2}{\pi}\int_{\sqrt n}^{|\alpha_n|} \frac{F_n(s/\sqrt n)}{1+s^2/n}ds$ is negligible. Thus, it suffices to assume that $\alpha_n^2<n$. 
 By \eqref{e2.13}, 
 \begin{equation}
     \label{e2.46}
 K_n(a,b)=\sum_{k=0}^\infty \left(\frac{2}{\pi}\int_0^{|\alpha_n|} f_k(s)ds\right)\frac{1}{n^k}.
 \end{equation}
We now show that
\begin{equation}
    \label{e2.47}
    \sum_{k=0}^{\infty} \left(\frac{2}{\pi}\int_{|\alpha_n|}^\infty f_k(s)ds\right)\frac{1}{n^k}\ll |\alpha_n|^3e^{-\alpha^2_n+\alpha_n^4/2n}.
\end{equation}
In fact, since $|\alpha_n|\to \infty$ as $n\to \infty$, we see that, for any fixed $k\ge 0$ and all $n$ sufficiently large, the function $s\mapsto s^{4k+3}e^{-s^2/2}$ achieves its maximum value on $[|\alpha_n|,\infty)$ at $s=|\alpha_n|$. This implies
\begin{align*}
0\le \int_{|\alpha_n|}^{\infty}\frac{s^{4k+4}e^{-s^2}}{2^{k+1}k!}ds &\le \frac{|\alpha_n|^{4k+3}e^{-\alpha_n^2/2}}{ 2^{k+1}k!}\int_{|\alpha_n|}^{\infty}se^{-s^2/2}ds=\frac{|\alpha_n|^{4k+3}e^{-\alpha_n^2}}{2^{k+1}k!}.
\end{align*}
Therefore, 
\begin{align*}
\sum_{k=0}^{\infty}\left(\frac{2}{\pi}\int_{|\alpha_n|}^{\infty}\frac{s^{4k+4}e^{-s^2}}{2^{k+1}k!}ds\right)\frac{1}{n^k}&\le \frac{|\alpha_n|^3e^{-\alpha_n^2+\alpha_n^4/2n}}{\pi},
\end{align*}
which gives \eqref{e2.47} when combined with \eqref{e2.15}. Next, in view of  \eqref{e2.46} and \eqref{e2.47}, the series on the right-hand side of \eqref{e2.42} converges and 
\[
K_n(a,b)=\sum_{k=0}^{\infty}\frac{\kappa_k}{n^k}+O\left(|\alpha_n|^3e^{-\alpha^2_n+\alpha_n^4/2n}\right).
\]
Since $O(|\alpha_n|^3e^{-\alpha^2_n+\alpha_n^4/2n})$ is negligible when $\alpha_n^2/\log n \to \infty$, we get \eqref{e2.42}.

Similarly, \eqref{e2.43}  follows from applying Lemma \ref{L2.7} and \eqref{e2.40}.

Finally, by Lemma \ref{L2.8}, \eqref{e2.44} and \eqref{e2.45} follow from \eqref{e2.42} and \eqref{e2.43}, respectively. 
\end{proof}

Note that if $\alpha^2_n$ does not grow faster than $\log n$, then  $|\alpha_n|^3e^{-\alpha^2_n+\alpha_n^4/2n}\ge n^{-c}$ for some constant $c>0$. In this case, we are thus looking for finite expansions of $K_n(a,b)$ and $L_n(a,b)$.
\begin{lemma}\label{L2.10}
As $n\to \infty$, if $|\alpha_n|\to \infty$ and $\alpha^2_n=O(\log n)$, then for $d_n$ given by \eqref{e1.7}, we have
\begin{align*}
    K_n(a,b)=\sum_{k=0}^{d_n}\frac{\kappa_k}{n^k}+O(|\alpha_n|^3e^{-\alpha_n^2}) \quad \text{and}\quad 
    L_n(a,b)=\sum_{k=0}^{d_n}\frac{\ell_k}{n^k}+O(\alpha_n^4e^{-\alpha_n^2}).
\end{align*}
\end{lemma}
\begin{proof} Since $\alpha^2_n=O(\log n)$, $d_n$ is bounded. A slight change in the proof of Lemma \ref{L2.6} actually shows that, for $s\in [0,|\alpha_n|]$,
\begin{align*}
    \frac{F_n(s/\sqrt n)}{1+s^2/n}=\sum_{k=0}^{d_n}\frac{f_k(s)}{n^k}+O\left(\frac{\alpha_n^{4d_n}}{n^{d_n+1}}\right).
\end{align*}
It follows that 
\begin{align*}
    K_n(a,b)&=\sum_{k=0}^{d_n}\left(\frac{2}{\pi}\int_0^{|\alpha_n|}f_k(s)ds\right)\frac{1}{n^k} +O\left(\frac{|\alpha_n|^{4d_n+1}}{n^{d_n+1}}\right)\\
    &=\sum_{k=0}^{d_n}\frac{\kappa_k}{n^k}-\sum_{k=0}^{d_n}\left(\frac{2}{\pi}\int_{|\alpha_n|}^\infty f_k(s)ds\right)\frac{1}{n^k}+O\left(\frac{|\alpha_n|^{4d_n+1}}{n^{d_n+1}}\right).
\end{align*}
Analysis similar to that in the proof of \eqref{e2.47} shows 
\[
\sum_{k=0}^{d_n}\left(\frac{2}{\pi}\int_{|\alpha_n|}^\infty f_k(s)ds\right)\frac{1}{n^k}\le \frac{1+d_n}{\pi}|\alpha_n|^3e^{-\alpha_n^2}\ll |\alpha_n|^3e^{-\alpha_n^2}.
\]
This clearly forces 
\[
 K_n(a,b)=\sum_{k=0}^{d_n}\frac{\kappa_k}{n^k}+O(|\alpha_n|^3e^{-\alpha_n^2}).
\]
The term $L_n(a,b)$ can be handled in much the same way.
\end{proof}
Next, we establish the asymptotics of $K_n(a,b)$ and $L_n(a,b)$ when $\alpha_n=o(1)$.
\begin{lemma}\label{L2.11} As $n\to \infty$, if $\alpha_n=o(1)$, then
\begin{align}
    \label{e2.48}
    K_n(a,b)&=-\frac{2}{\pi}|\alpha_n|+\frac{1}{4}\alpha_n^2-\frac 14 \frac{\alpha_n^2}{n}+\frac{2}{3\pi}\frac{|\alpha_n|^3}{n}+O(\alpha_n^4),\\
    \label{e2.49}
    L_n(a,b)&=-\frac{1}{\pi^2}\alpha_n^2+\frac{1}{6\pi}|\alpha_n|^3-\frac{1}{6\pi}\frac{|\alpha_n|^3}{n}+\frac{2}{3\pi^2}\frac{\alpha_n^4}{n}+O(|\alpha_n|^5).
\end{align}
\end{lemma}
\begin{proof}
By \eqref{e2.13} and \eqref{e2.14}, 
\begin{align*}
    K_n(a,b)&=\frac{2}{\pi}\int_0^{|\alpha_n|}\left[-1 +\frac{\pi }{4}s+\frac{1}{n}\left(-\frac{\pi}{4}s+s^2\right)\right]ds+O(\alpha_n^4)\\
    &=-\frac{2}{\pi}|\alpha_n|+\frac{1}{4}\alpha_n^2-\frac 14 \frac{\alpha_n^2}{n}+\frac{2}{3\pi}\frac{|\alpha_n|^3}{n}+O(\alpha_n^4),
\end{align*}
and \eqref{e2.48} is proved. Similarly, \eqref{e2.49} follows from applying Lemma \ref{L2.7}.
\end{proof}
We are now ready to prove Theorem \ref{T1.2}.
\begin{proof}[Proof of Theorem \ref{T1.2}] The proof relies on the asymptotic behavior of $\alpha_n$ as $n$ becomes large.
\begin{enumerate}
    \item  Assume first that $|\alpha_n|\to \infty$ as $n\to \infty$. Then \eqref{e1.9} is a consequence of Theorem \ref{T1.1} and Lemma \ref{L2.9}, while \eqref{e1.8} follows from Theorem \ref{T1.1}, the relation \eqref{e2.44}, and Lemma \ref{L2.10}.
    \item If $\alpha_n=c>0$, then  
    \[\mathbb E[N_n(a,b)]=\frac{\sqrt{ n}}{\pi}\arctan\frac{c}{\sqrt n}.\] 
    Thus, using \eqref{e1.4}, Lemmas \ref{L2.6} and \ref{L2.7}, we deduce \eqref{e1.10}. For $\alpha_n=-c$, \eqref{e1.11} follows from \eqref{e1.6}, \eqref{e2.44}, Lemmas \ref{L2.6} and \ref{L2.7}, along with the fact that
    \[\mathbb E[N_n(a,b)]=\sqrt n+ \frac{\sqrt n}{\pi}\arctan\frac{c}{\sqrt n}.\]
    \item Suppose that $\alpha_n=o(1)$ as $n\to \infty$. If $\alpha_n>0$, then 
    \[
    \mathbb E[N_n(a,b)]=\frac{\sqrt n}{\pi}\arctan \alpha(a,b)= 
    \frac{1}{\pi}\left(\alpha_n -\frac{\alpha_n^3}{3n}\right)+O(\alpha_n^5).\] 
    Hence, \eqref{e1.12} follows from \eqref{e1.4} and Lemma \ref{L2.11}.  
    Next, for $\alpha_n<0$, we see that      
    \[\mathbb E[N_n(a,b)]=\sqrt n+ \frac{1}{\pi}\left(\alpha_n -\frac{\alpha_n^3}{3n}\right)+O(|\alpha_n|^5).\]  
    Combining this with \eqref{e1.6}, Lemma \ref{L2.11}, and the facts that $\sqrt n/n^{u_n}=O(|\alpha_n|^5)$ and $ \alpha_n/n^{v_n}=O(|\alpha_n|^5)$, we establish \eqref{e1.13}. If additionally, $v_n\to \infty$, implying $u_n\to \infty$ and $\alpha_n=o(n^{-v_n/4})$, which is negligible, then \eqref{e1.14} follows as a consequence of \eqref{e1.13}. Finally, substituting \eqref{e2.44} into \eqref{e1.5} yields \eqref{e1.15}.
\end{enumerate}
\end{proof}
\section{Cumulants and their asymptotics}
In this section, we thoroughly explore the computation and asymptotic analysis of cumulants related to the number of real zeros of GAFs. Additionally, we provide proofs for Theorems \ref{T1.14}, \ref{T1.20}, \ref{T1.24}, and \ref{T1.3}.
\subsection{Correlation functions} \label{S3.1}  To compute the cumulants of the number of real zeros of GAFs, we employ the concept of correlation functions and truncated correlation functions, as outlined below (see Do and Vu \cite{DV}  and Nazarov and Sodin \cite{NS} for additional details).

Let $Z$ be a random point process on $\mathbb R$. 
For $k\ge 1$, the function $\rho_{k}:\mathbb R^k \to \mathbb R$ is called the $k$-point correlation function of $Z$ if, for any compactly supported $C^\infty$ function $f:\mathbb R^k \to \mathbb R$, it holds that 
\[
\mathbb E\left[\sum f(x_1,...,x_k)\right]=\int_{\mathbb R^k}f(\xi_1,...,\xi_k)\rho_{k}(\xi_1,...,\xi_k)d\xi_1\cdots d\xi_k,
\]
where the sum is taken over all possible ordered $k$-tuples $(x_1,...,x_k)$ of distinct points in $Z$. 
The $k$-point correlation function is symmetric and locally integrable on $\mathbb R^k$.
Furthermore, if there is $\epsilon>0$ such that $\rho_k$ is locally $L^{1+\epsilon}$ integrable, then for any interval $U\subset \mathbb R$, it holds that
\[
\mathbb E[N_Z(U)(N_Z(U)-1)\cdots(N_Z(U)-k+1)]=\int_{U^k}\rho_k(\xi_1,...,\xi_k)d\xi_1\cdots d\xi_k,
\] 
where $N_Z(U)=|Z\cap U|$ (see, for example, \cite{DV}*{\S7}).

It is important to note that the $k$-point correlation function does not always exist. Within the scope of this paper, the presence of correlation functions directly derives from the Kac-Rice formula (see, for example, \cite{AW}*{Chapter 3}). For instance, if $Z$ denotes the multiset of real zeros of a smooth, non-degenerate, centered Gaussian process $G$, the $k$-point correlation function $\rho_k(\pmb \xi)$ is well-defined for $\pmb \xi=(\xi_1,...,\xi_k)$ of distinct points in $\mathbb R$, and it is given by
\begin{align*}
    \rho_k(\pmb \xi)=\int_{\mathbb R^k}|y_1\cdots y_k|D_k(\pmb y;\pmb \xi)dy_1\cdots dy_k,
\end{align*}
where $\pmb y=(0,y_1,...,0, y_k)\in \mathbb R^{2k}$ and  $D_k(\pmb y;\pmb \xi)$ is the probability density of the Gaussian vector $(G(\xi_1),G'(\xi_1),...,G(\xi_k),G'(\xi_k))$.   This density can be more explicitly expressed as
\[
D_k(\pmb y;\pmb \xi)=\frac{e^{-\frac 12 \langle \pmb y(\Gamma(\pmb \xi))^{-1},\pmb y \rangle}}{(2\pi)^{k}\sqrt{\det \Gamma(\pmb \xi)}},
\]
where $\Gamma(\pmb \xi)$ is the covariance matrix of $(G(\xi_1),G'(\xi_1),...,G(\xi_k),G'(\xi_k))$.

We define $\Pi(k)$ as the set of all unordered partitions of the set $\{1,\ldots, k\}$ into disjoint nonempty blocks, and $\Pi(k,j)$ as the set of all unordered partitions of the set $\{1,\ldots, k\}$ into exactly $j$ disjoint nonempty blocks. For a partition $\gamma$ in $\Pi(k,j)$, we denote the blocks as $\{\gamma_1,\ldots, \gamma_j\}$ with an arbitrarily chosen enumeration and the lengths of the blocks as $l_i = |\gamma_i|$ for $1 \le i \le j$. For $\pmb\xi = (\xi_1, \ldots, \xi_k)$ and $\gamma_j \subset \{1, \ldots, k\}$, let $\pmb\xi_{\gamma_j}$ stand for  $(\xi_i)_{i\in \gamma_j}$.

The function $\rho^T_{k}$, defined as
\begin{equation} \label{e3.1}
    \rho_{k}^T(\pmb \xi)=\sum_{j=1}^k(-1)^{j-1}(j-1)!\sum_{\gamma\in \Pi(k,j)}\rho_{l_1}(\pmb \xi_{\gamma_1})\cdots \rho_{l_j}(\pmb \xi_{\gamma_j}),
\end{equation}
is called the truncated $k$-point correlation function of $Z$. 

We can verify that $\rho^T_{1}(\xi_1)=\rho_{1}(\xi_1)$, $\rho^T_{2}(\xi_1,\xi_2)=\rho_{2}(\xi_1,\xi_2)-\rho_{1}(\xi_1)\rho_{1}(\xi_2)$, 
\begin{align*}
    \rho_{3}^T(\xi_1,\xi_2,\xi_3)&=\rho_{3}(\xi_1,\xi_2,\xi_3)-\rho_{2}(\xi_1,\xi_2)\rho_{1}(\xi_3)-\rho_{2}(\xi_2,\xi_3)\rho_{1}(\xi_1)-\rho_{2}(\xi_1,\xi_3)\rho_{1}(\xi_2)\\
    &\quad +2\rho_{1}(\xi_1)\rho_{1}(\xi_2)\rho_{1}(\xi_3),
\end{align*}
and so on. Moreover, the inversion of \eqref{e3.1} takes the form 
\begin{equation}
    \label{e3.2}
    \rho_k(\pmb \xi)=\sum_{\gamma \in \Pi(k)}\rho^T_{l_1}(\pmb \xi_{\gamma_1})\cdots \rho^T_{l_j}(\pmb \xi_{\gamma_j}).
\end{equation}

 The computation of the $k$th cumulant $s_k[N_Z(U)]$ necessitates knowledge of $m$-point truncated correlation functions $\rho_m^T$ for all $m\in \{1,...,k\}$. 
\begin{lemma} \label{L3.1} Let $k\ge 1$. If $\rho_m^T \in L^{1}(U^m)$ for all $m\in \{1,...,k\}$, then
    \[
        s_k[N_Z(U)]=\sum_{\gamma\in \Pi(k)}\int_{U^{|\gamma|}}\rho^T_{|\gamma|}(\pmb \xi_\gamma)d\pmb\xi_\gamma,
    \]
where $|\gamma|$ is the number of blocks in the partition $\gamma$ and $d\pmb\xi_\gamma$ is the Lebesgue measure on $U^{|\gamma|}$.
\end{lemma}
For detailed proofs, we refer the reader to, for example, \cite{DV}*{Appendix B} or \cite{NS}*{Appendix}. 
\subsection{Estimates for correlation functions of real zeros of real GAFs} \label{S3.2}
In what follows, let $k\ge 1$ be an integer and $Q$ be a real GAF. Let $\rho_k$ and $\rho_k^T$ denote the $k$-point and truncated $k$-point correlation functions for the real zeros of $Q$, respectively. The proofs in this subsection adhere closely to the approach in \cite{DV}, building upon arguments from \cite{NS}.

The following lemma asserts that if $Q$ is $2k$-nondegenerate, then $\rho_k$ is locally bounded (see \cite{DV}*{\S 10.1} for a proof).   
\begin{lemma}[\cite{DV}] \label{L3.2}
    If $Q$ is $2k$-nondegenerate, then for any $M>0$, there exists a finite positive constant $C_{Q, M, k}$ such that, for all $\xi_1, \ldots, \xi_k \in [-M, M]$, one has
    \[
    \frac{1}{C_{Q,M,k}}\prod_{1\le i<j\le k}|\xi_i-\xi_j| \le \rho_k(\xi_1,...,\xi_k) \le C_{Q,M,k}\prod_{1\le i<j\le k}|\xi_i-\xi_j|.
    \]
\end{lemma}

The next result illustrates a clustering property for $\rho_k$ when $Q$ fulfills hypothesis \ref{H3}.
\begin{lemma} \label{L3.3}
    Suppose that $Q$ satisfies hypothesis \ref{H3}. There exist positive constants $C_k$ and $D_k$ such that the following holds: For any $\pmb \xi=(\xi_1,...,\xi_k)$ of distinct points in $\mathbb R$ and any partition $\pmb \xi= \pmb \xi_I \cup \pmb \xi_J$ with $d=d(\pmb \xi_I, \pmb \xi_J)\ge D_k \tau_k$, we have 
    \[
    \left|\frac{\rho_k(\pmb \xi)}{\rho_{|I|}(\pmb \xi_I)\rho_{|J|}(\pmb \xi_J)}-1 \right|\le C_k\psi(d-\tau_k).
    \]
\end{lemma}
\begin{proof} 
    For $c_k, \tau_k$, and $\psi$ as in \ref{H3}, 
    let $\epsilon= 2c_k\psi(d-\tau_k)>0$. Since $\lim_{x\to \infty}\psi(x)=0$, we can choose $D_k \ge 2$ such that $\epsilon<1$. Let $C_k>0$ be such that 
    \[
    0<1-C_k\psi(d-\tau_k) \le \left(\frac{1-\epsilon}{1+\epsilon}\right)^k \quad\text{and}\quad \left(\frac{1+\epsilon}{1-\epsilon}\right)^k \le 1+C_k\psi(d-\tau_k).
    \]
    The proof is completed by showing that 
    \begin{equation}\label{e3.3}
        \left(\frac{1-\epsilon}{1+\epsilon}\right)^k \rho_{|I|}(\pmb \xi_I)\rho_{|J|}(\pmb \xi_J) \le \rho_k(\pmb \xi) \le \left(\frac{1+\epsilon}{1-\epsilon}\right)^k\rho_{|I|}(\pmb \xi_I)\rho_{|J|}(\pmb \xi_J).
    \end{equation}
    By \ref{H3}, 
    \[
    \left|\mathbb  E\left[L_I^{\pmb z}Q(\pmb \xi_I)L_J^{\pmb z}Q(\pmb \xi_J)\right]\right| \le \frac{1}{2}\epsilon \left(\mathbb  E[\left|L_I^{\pmb z}Q(\pmb \xi_I)\right|^2]+\mathbb  E[\left|L_J^{\pmb z}Q(\pmb \xi_J)\right|^2]\right),
    \]
    which implies 
    \begin{align*}
      (1-\epsilon) \left(\mathbb  E[\left|L_I^{\pmb z}Q(\pmb \xi_I)\right|^2]+\mathbb  E[\left|L_J^{\pmb z}Q(\pmb \xi_J)\right|^2]\right) &\le E[\left|L^{\pmb z}Q(\pmb \xi)\right|^2]\\
      & \le (1+\epsilon) \left(\mathbb  E[\left|L_I^{\pmb z}Q(\pmb \xi_I)\right|^2]+\mathbb  E[\left|L_J^{\pmb z}Q(\pmb \xi_J)\right|^2]\right).  
    \end{align*}
    Let $\Lambda(\pmb \xi)$ be the covariance matrix of $(Q(\xi_1),Q'(\xi_1),...,Q(\xi_k), Q'(\xi_k))$. Similarly, we can define the covariance matrices $\Lambda_I(\pmb \xi_I)$ and $\Lambda_J(\pmb \xi_J)$. We set 
    \[\Lambda_{I,J}(\pmb \xi)=\begin{pmatrix}
        \Lambda_I(\pmb \xi_I)&0\\
        0&\Lambda_J(\pmb \xi_J)
    \end{pmatrix}.\]
    By \eqref{e1.24}, we deduce that 
    \[(1-\epsilon)\Lambda_{I,J}(\pmb \xi) \le \Lambda(\pmb \xi) \le (1+\epsilon)\Lambda_{I,J}(\pmb \xi).\]
    This yields
    \[
    \det \Lambda(\pmb \xi)\ge (1-\epsilon)^{2k} \det \Lambda_{I,J}(\pmb \xi) = (1-\epsilon)^{2k} \det \Lambda_I(\pmb \xi_I) \det \Lambda_J(\pmb \xi_J),
    \]
    and hence, applying the Kac-Rice formula, we derive
    \begin{align*}
    \rho_k(\pmb \xi)&=\frac{1}{(2\pi)^{k}\sqrt{\det \Lambda(\pmb \xi)}}\int_{\mathbb R^k}|y_1\cdots y_k|e^{-\frac 12\langle \pmb y (\Lambda(\pmb \xi))^{-1},\pmb y\rangle}dy_1\cdots dy_k\\
    &\le \frac{(1-\epsilon)^{-k}}{(2\pi)^{k}\sqrt{\det \Lambda_{I,J}(\pmb \xi)}}\int_{\mathbb R^k}|y_1\cdots y_k|e^{-\frac{1}{2(1+\epsilon)}\langle \pmb y(\Lambda_{I,J}(\pmb \xi))^{-1},\pmb y\rangle}dy_1\cdots dy_k \\
        &=\left(\frac{1+\epsilon}{1-\epsilon}\right)^k\rho_{|I|}(\pmb \xi_I)\rho_{|J|}(\pmb \xi_J),
    \end{align*}
    where $\pmb y=(0,y_1,...,0,y_k)$ and the last equality is obtained by changing variables and using the fact that 
    \[
    (\Lambda_{I,J}(\pmb \xi))^{-1} = \begin{pmatrix}
        (\Lambda_I(\pmb \xi_I))^{-1}&0\\
        0&(\Lambda_J(\pmb \xi_J))^{-1}
    \end{pmatrix}.
    \]
    Similarly, we have 
    \[
    \rho_k(\pmb \xi) \ge \left(\frac{1-\epsilon}{1+\epsilon}\right)^k \rho_{|I|}(\pmb \xi_I)\rho_{|J|}(\pmb \xi_J),
    \]
    and \eqref{e3.3} is proved.
\end{proof}

Lemma \ref{L3.3} demonstrates that when variables in $\mathbb{R}^k$ are partitioned into two well-separated clusters, the correlation function $\rho_k$ closely approximates the product of the corresponding factors. Hypothesis \ref{H2} ensures translation invariance of $\rho_m$ ($1\le m \le k$). Complementing the local estimates derived from Lemma \ref{L3.2}, we establish the following uniform estimates for $\rho_k$.
\begin{lemma} \label{L3.4}
    If $Q\in \mathcal A_k$, there exists a positive constant $C_k$ such that
    \[
    \frac{1}{C_k}\prod_{1\le i<j\le k}\min(1,|\xi_i-\xi_j|) \le \rho_k(\xi_1,...,\xi_k) \le C_k\prod_{1\le i<j\le k}\min(1,|\xi_i-\xi_j|).
    \]
\end{lemma}
\begin{proof}
    We proceed with induction on $k$. The base case $k=1$ is evident, with $\rho_1$ being a constant. For any $k\ge 2$, assuming the lemma holds for $m$-point correlation functions with $m\le k-1$, we aim to prove it for $k$-point functions. We consider two scenarios.

    If $\pmb \xi=(\xi_1,...,\xi_{k})\in \mathbb R^k$ can be split into two groups $\pmb \xi_I$ and $\pmb \xi_J$ with $d(\pmb \xi_I, \pmb \xi_J)\ge D_k\tau_k$, then employing Lemma \ref{L3.3}, we obtain
    \[
 (1-C_k\psi(d-\tau_k))\rho_{|I|}(\pmb \xi_I)\rho_{|J|}(\pmb \xi_J)\le \rho_k(\pmb \xi)\le (1+C_k\psi(d-\tau_k))\rho_{|I|}(\pmb \xi_I)\rho_{|J|}(\pmb \xi_J),
    \]
    and the result follows from the induction hypothesis.
    
    If no such splitting exists, then $\diam(\pmb \xi)=\max_{1\le i<j\le k}|\xi_i-\xi_j|$ is bounded. In this case, the result follows from the local bounds in Lemma \ref{L3.2} and translation invariance due to \ref{H2}.
\end{proof}
By combining Lemma \ref{L3.3} and Lemma \ref{L3.4}, we readily obtain the following additive version of the clustering property.
\begin{lemma} \label{L3.5}
    If $Q\in \mathcal A_k$, there exist positive constants $C_k$ and $\tau_k$ such that the following holds: For $\pmb \xi=(\xi_1,...,\xi_k)\in \mathbb R^k$ and any partition $\pmb \xi= \pmb \xi_I \cup \pmb \xi_J$ with $d=d(\pmb \xi_I, \pmb \xi_J)\ge 2\tau_k$, we have 
    \[
    \left|\rho_k(\pmb \xi)-\rho_{|I|}(\pmb \xi_I)\rho_{|J|}(\pmb \xi_J) \right|\le C_k\psi(d-\tau_k).
    \]
\end{lemma}

The asymptotic factorization of $k$-point correlation functions, as stated in Lemma \ref{L3.5}, results in the asymptotic decay of truncated $k$-point functions as the variable's diameter increases.
\begin{lemma}\label{L3.6}
    Assuming $Q\in \mathcal A_k$, there exist finite positive constants $c_k$ and $C_k$ such that for any $\pmb \xi=(\xi_1,...,\xi_k) \in \mathbb R^k$, the following inequality holds
    \begin{equation}\label{e3.4}
        |\rho_k^T(\pmb \xi)| \le C_k \hat{\psi}(c_k\diam(\pmb \xi)),
    \end{equation}
    where $\hat{\psi} =\min(1, \psi)$ and $\diam(\pmb \xi)=\max_{1\le i<j\le k}|\xi_i-\xi_j|$ is the diameter of $\pmb \xi$.
\end{lemma}
\begin{proof}
     For $k=1$, \eqref{e3.4} is obvious since $\rho_1^T=\rho_1$, which is a constant. For $k\ge 2$, we aim to show that there exist finite positive constants $c_k$ and $C_k$ such that, for any $\pmb \xi=(\xi_1,...,\xi_k) \in \mathbb R^k$ and any partition $\{1,\dots,k\}=I\cup J$, 
      \begin{equation} \label{e3.5}
    |\rho_k^T(\pmb \xi)| \le C_k \hat{\psi}(c_kd(\pmb\xi_I,\pmb \xi_J)).
    \end{equation}
     Since for any $\pmb \xi\in \mathbb R^k$, there exists a partition $\{1,...,k\}=I\cup J$ such that $d(\pmb \xi_I, \pmb \xi_J)\ge \frac{1}{2k}\diam(\pmb \xi)$, it follows that \eqref{e3.5} implies \eqref{e3.4}.
     
     We prove \eqref{e3.5} by induction on $k$. For $k=2$, it follows from the clustering of $\rho_2$ and the uniform boundedness of $\rho_1$ and $\rho_2$ that
    \[
    |\rho_2^T(\xi_1,\xi_2)|=|\rho_2(\xi_1,\xi_2)-\rho_1(\xi_1)\rho_1(\xi_2)| \le C \min(1, \psi(c|\xi_1-\xi_2|)).
    \]
    Let $k\ge 3$. Fix a partition $\{1,...,k\}=I\cup J$ and define $\Pi(I,J)$ as the set of non-trivial partitions of $\{1,...,k\}$ that mix $I\cup J$; that is, it contains partitions with at least two blocks such that there exists a block intersecting both $I$ and $J$. By the inversion formula \eqref{e3.2}, we have 
    \[
    \rho_k(\pmb \xi)-\rho_{|I|}(\pmb \xi_I)\rho_{|J|}(\pmb \xi_J)=\rho_k^T(\pmb \xi)+\sum_{\gamma\in \Pi(I,J)}\rho^T_{l_1}(\pmb \xi_{\gamma_1})\cdots \rho^T_{l_j}(\pmb \xi_{\gamma_j}),
    \]
    which implies
    \[
    |\rho_k^T(\pmb\xi)|\le |\rho_k(\pmb \xi)-\rho_{|I|}(\pmb \xi_I)\rho_{|J|}(\pmb \xi_J)|+\sum_{\gamma\in \Pi(I,J)}|\rho^T_{l_1}(\pmb \xi_{\gamma_1})\cdots \rho^T_{l_j}(\pmb \xi_{\gamma_j})|.
    \]
    Therefore, by employing Lemma \ref{L3.5} and the induction assumption, we can assert that there exist positive constant $c_k$ and $C_k$ such that \eqref{e3.5} holds.
\end{proof}

It is noteworthy that in \cite{BSZ}*{\S5}, Bleher, Shiffman, and Zelditch derived analogous estimates for correlation functions and truncated correlation functions utilizing the Wick formula, subject to the condition $\min_{i\ne j}|\xi_i-\xi_j|\ge c>0$.

\subsection{Proof of Theorem \ref{T1.14}} \label{S3.3} Applying Lemma \ref{L3.1},  we express $s_m[N_Q(R)]$ as
\[
s_m[N_Q(R)] =\sum_{\gamma\in \Pi(m)}\int_{[0,R]^{|\gamma|}}\rho^T_{|\gamma|}(\pmb \xi_\gamma)d\pmb\xi_\gamma,
\]
where $|\gamma|$ is the number of blocks in the partition $\gamma$ and $d\pmb\xi_\gamma$ is the Lebesgue measure on $[0,R]^{|\gamma|}$. Our goal is to establish, for each $j\in \{1,...,m\}$, the existence of bounded functions $\hat \theta_j$ and $\hat\lambda_j: [0,\infty)\to \mathbb R$ such that
\begin{equation} \label{e3.6}
    \int_{[0,R]^j}\rho^T_j(\pmb \xi)d\pmb\xi = R\hat \theta_j(R)+\hat\lambda_j(R).
\end{equation}
Assuming \eqref{e3.6}, we obtain \eqref{e1.22}  with 
\begin{equation} \label{e3.7}
\theta_m^Q(R)=\sum_{\gamma\in \Pi(m)}\hat \theta_{|\gamma|}(R) \quad \text{and}\quad \lambda_m^Q(R)=\sum_{\gamma\in \Pi(m)}\hat \lambda_{|\gamma|}(R).
\end{equation}
Clearly, \eqref{e3.6} holds for $j=1$, where $\hat \theta_1(R)=\rho_1(0)$ and $\hat\lambda_1(R)=0$. Now, consider the case where $j\ge 2$. Due to translation invariance,
\begin{align*}
    \int_{[0,R]^j}\rho^T_j(\pmb \xi)d\pmb\xi &=\int_0^Rd\xi_j\int_{[0,R]^{j-1}}\rho^T_j(\xi_1-\xi_j,...,\xi_{j-1}-\xi_j,0)d\xi_1\cdots d\xi_{j-1}.
\end{align*}
Through a change of variables and application of Fubini's theorem, we see that
\begin{align*}
    \int_{[0,R]^j}\rho^T_j(\pmb \xi)d\pmb\xi & =\int_0^R d\xi_j \int_{[-\xi_j,R-\xi_j]^{j-1}}\rho_j^T(t_1,...,t_{j-1},0)dt_1\cdots dt_{j-1}\\
    &= \int_0^R d\xi_j \int_{[-R,R]^{j-1}}\rho_j^T(t_1,...,t_{j-1},0)\prod_{i=1}^{j-1}\pmb 1_{[-\xi_j, R-\xi_j]}(t_i)dt_1\cdots dt_{j-1}\\
    &=  \int_{[-R,R]^{j-1}}dt_1\cdots dt_{j-1}\int_0^R \rho_j^T(t_1,...,t_{j-1},0) \prod_{i=1}^{j-1}\pmb 1_{[-t_i, R-t_i]}(\xi_j)d\xi_j\\
    &=  \int_{[-R,R]^{j-1}}dt_1\cdots dt_{j-1} \rho_j^T(t_1,...,t_{j-1},0)\\
    &\qquad \qquad \qquad \times [R-\max(t_1,...,t_{j-1},0)+\min(t_1,...,t_{j-1},0)]\\
    &= R\int_{[-R,R]^{j-1}}\rho_j^T(\pmb t,0) d\pmb t -  \int_{[-R,R]^{j-1}}\rho_j^T(\pmb t,0)\diam(\pmb t,0) d\pmb t,
\end{align*}
where $\pmb t=(t_1,...,t_{j-1})$, $d\pmb t=dt_1\cdots dt_{j-1}$, and $\diam(\pmb t,0)$ is the diameter of the configuration $(t_1,...,t_{j-1},0)$. This implies \eqref{e3.6} with 
\[
\hat \theta_j(R)=\int_{[-R,R]^{j-1}}\rho_j^T(\pmb t,0) d\pmb t \quad\text{and}\quad \hat \lambda_j(R)=-\int_{[-R,R]^{j-1}}\rho_j^T(\pmb t,0)\diam(\pmb t,0) d\pmb t.
\]
We deduce from Lemma \ref{L3.6} that 
\[
|\hat \theta_j(R)|\le \int_{[-R,R]^{j-1}}|\rho_j^T(\pmb t,0)| d\pmb t\ll \int_{[-R,R]^{j-1}}\psi(c_k\diam(\pmb t,0))d\pmb t.
\]
Similarly, 
\begin{align*}
    |\hat \lambda_j(R)| \ll \int_{[-R,R]^{j-1}}\psi(c_k\diam(\pmb t,0))\diam(\pmb t,0) d\pmb t.
\end{align*}
Hence, the boundedness of $\hat\theta_j$ and $\hat\lambda_j$ follows from the subsequent lemma and the fact that
\[
\int_0^\infty \psi(x)x^{k-1}dx<\infty.
\]
\begin{lemma} \label{L3.7}
    For any $\psi\in \Psi_k$ and $R>0$, it holds that
    \begin{equation} \label{e3.8}
       k\int_0^R \psi(x)x^{k-1}dx \le \int_{(-R,R)^{k}}\psi(\diam(\pmb \xi,0)) d\pmb \xi \le 2^kk\int_0^{2R} \psi(x)x^{k-1}dx. 
    \end{equation}
\end{lemma}
Assuming Lemma \ref{L3.7}, we can define
\[
\hat \theta_{j,\infty}=\int_{\mathbb R^{j-1}}\rho_j^T(\pmb t,0) d\pmb t \quad\text{and}\quad \hat \lambda_{j,\infty}=-\int_{\mathbb R^{j-1}}\rho_j^T(\pmb t,0)\diam(\pmb t,0) d\pmb t.
\]
Moreover, there exists a function $\psi\in \Psi_k$ such that, as $R\to \infty$, we have
\[
|\hat\theta_j(R)-\hat\theta_{j,\infty}|\ll  \int_{R}^\infty \psi(x)x^{j-2}dx
\quad\text{and}\quad 
|\hat\lambda_j(R)-\hat\lambda_{j,\infty}| \ll  \int_{R}^\infty \psi(x)x^{j-1}dx.
\]
Substituting these estimates into \eqref{e3.7}, we deduce
\[
\theta_m^Q(R) =\theta_{m,\infty}^Q+O\left(\int_{R}^\infty \psi(x)x^{m-2}dx\right)\quad\text{and}\quad \lambda_m^Q(R) =\lambda_{m,\infty}^Q+O\left( \int_{R}^\infty \psi(x)x^{m-1}dx\right),
\]
where 
\[
\theta_{m,\infty}^Q=\sum_{\gamma\in \Pi(m)}\hat \theta_{|\gamma|,\infty} \quad \text{and}\quad \lambda_{m,\infty}^Q=\sum_{\gamma\in \Pi(m)}\hat \lambda_{|\gamma|,\infty}.
\]
Thus, \eqref{e1.23} is verified. It remains to prove Lemma \ref{L3.7}. 
\begin{proof}[Proof of Lemma \ref{L3.7}]
   We begin by noting that 
   \[
    \int_{(0,R)^{k}}\psi(\diam(\pmb \xi,0)) d\pmb \xi \le \int_{(-R,R)^{k}}\psi(\diam(\pmb \xi,0)) d\pmb \xi \le 2^k\int_{(0,2R)^{k}}\psi(\diam(\pmb \xi,0)) d\pmb \xi.
    \]
    For $\pmb \xi \in (0,R)^k$, we have $\diam(\pmb \xi,0)=\max(\xi_1,...,\xi_k):=\max \pmb \xi$. Hence, \eqref{e3.8} will be proved once we show that
    \begin{equation} \label{e3.9}
    \int_{(0,R)^{k}}\psi(\max \pmb \xi) d\pmb \xi =k\int_0^R \psi(x)x^{k-1}dx.
    \end{equation}
    For $\pmb\xi=(\xi_1,...,\xi_{k-1},\xi_k)$, let $\pmb\xi_{k-1}=(\xi_1,...,\xi_{k-1})$. Applying Fubini's theorem, we have
    \begin{align*}
       &\int_{(0,R)^{k}}\psi(\max\pmb\xi) d\pmb \xi  =\int_{(0,R)^{k-1}}d\pmb \xi_{k-1} \int_0^R\psi(\max\pmb \xi) d\xi_k \\
       &=\int_{(0,R)^{k-1}}d\pmb \xi_{k-1}\left( \int_0^{\max\pmb \xi_{k-1}}\psi(\max\pmb \xi_{k-1}) d\xi_k+\int_{\max\pmb \xi_{k-1}}^R\psi(\xi_k) d\xi_k\right) \\
        &=\int_{(0,R)^{k-1}}d\pmb \xi_{k-1}\left( \psi(\max\pmb \xi_{k-1}) \max\pmb \xi_{k-1}+\int_{0}^R\psi(\xi_k)\pmb 1_{(\max\pmb \xi_{k-1},R)}(\xi_k) d\xi_k\right) \\
        &=\int_{(0,R)^{k-1}}\psi(\max\pmb \xi_{k-1}) \max\pmb \xi_{k-1}d\pmb \xi_{k-1}+\int_{0}^R\psi(\xi_k) \xi_k^{k-1}d\xi_k.
    \end{align*}
    By repeatedly applying this argument, we obtain
    \[
     \int_{(0,R)^{k}}\psi(\max\pmb\xi) d\pmb \xi=\sum_{j=1}^k \int_0^R \psi(\xi_j)\xi_j^{k-1}d\xi_j,
    \]
    which gives \eqref{e3.9}.
\end{proof}
\subsection{Proof of Theorem \ref{T1.20}} \label{S3.4} For each $j\in \{1,...,m\}$, let $\rho_{n,j}$ and $\rho_{n,j}^T$ denote the $j$-point and truncated $j$-point correlation functions for the real zeros of $Q_n$, respectively. According to the Kac-Rice formula,
\[
    \rho_{n,j}(\pmb \xi) =\frac{1}{(2\pi)^j\sqrt{\det \Lambda_n(\pmb \xi)}}\int_{\mathbb R^j}|y_1\cdots y_j|e^{-\frac 12\langle \pmb y (\Lambda_n(\pmb \xi))^{-1},\pmb y\rangle}dy_1\cdots dy_j,
    \]
where $\pmb \xi=(\xi_1,...,\xi_j)$, $\Lambda_n(\pmb \xi)$ is the covariance matrix of $(Q_n(\xi_1),Q_n'(\xi_1),...,Q_n(\xi_j), Q_n'(\xi_j))$, and $\pmb y=(0,y_1,...,0,y_j)$. Note that hypothesis \ref{H4} implies 
\[
(1-\varepsilon_n) \Lambda(\pmb \xi) \le \Lambda_n(\pmb \xi)\le (1+\varepsilon_n) \Lambda(\pmb \xi),
\]
where $\Lambda(\pmb \xi)$ is the covariance matrix of $(Q(\xi_1),Q'(\xi_1),...,Q(\xi_j), Q'(\xi_j))$. Analysis similar to that in the proof of Lemma \ref{L3.3} shows that
\[
\left(\frac{1-\varepsilon_n}{1+\varepsilon_n}\right)^j\rho_j(\pmb \xi) \le \rho_{n,j}(\pmb \xi) \le \left(\frac{1+\varepsilon_n}{1-\varepsilon_n}\right)^j \rho_j(\pmb \xi).
\]
Combining this with \eqref{e3.1} and the uniform boundedness of $\rho_j$,  we deduce  that
\[
|\rho_{n,j}^T(\pmb \xi)-\rho_j^T(\pmb \xi)| \ll \varepsilon_n,\quad j=1,...,m.
\]
Consequently, by employing Lemma \ref{L3.1} and the fact that $\rho_j^T$ is translation-invariant, we derive
\[
|s_m[N_{Q_n}(I_n)]-s_m[N_Q(|I_n|)]| \ll \varepsilon_n|I_n|^m. 
\]
According to Theorem \ref{T1.14},
\[
s_m[N_Q(|I_n|)]=|I_n|\theta_m^Q(|I_n|)+\lambda_m^Q(|I_n|),
\]
and hence \eqref{e1.27} follows. 

As $n\to \infty$, if $|I_n|\to \infty$ and $\varepsilon_n|I_n|^m\to 0$, then \eqref{e1.28} is a consequence of \eqref{e1.27} and \eqref{e1.23}.
\subsection{Proof of Theorem \ref{T1.24}} \label{S3.5}  For each $j\in \{1,...,m\}$, let $\widetilde\rho_{j}$ and $\widetilde \rho_{j}^T$ denote the $j$-point and truncated $j$-point correlation functions for the real zeros of $P(x)=Q(\varrho(x))$, respectively, where $\varrho(x)=\int_0^x\rho(t)dt$. Given $(x_1,...,x_m)\in \mathbb R^m$, $(\Delta x_1,...,\Delta x_m)\in \mathbb R^m$, and $j\in\{1,...,m\}$, let
\[
\xi_j=\varrho(x_j),\quad \Delta \xi_j=\varrho(x_j+\Delta x_j)-\varrho(x_j)=\int_{x_j}^{x_j+\Delta x_j}\rho(t)dt,
\]
and denote by $N_Q(\xi_j,\xi_j+\Delta\xi_j)$ the number of real zeros of $Q$ between $\xi_j$ and $\xi_j+\Delta \xi_j$. Note that $P$ has a real zero between $x_j$ and $x_j+\Delta x_j$ if and only if $Q$ has a real zero between $\xi_j$ and $\xi_j+\Delta \xi_j$. This implies  
\begin{align*}
    \widetilde\rho_{j}(x_1,...,x_j)&=\lim_{\Delta x_1,...,\Delta x_j\to 0} \frac{\mathbb E[N_P(x_1,x_1+\Delta x_1)\cdots N_P(x_j,x_j+\Delta x_j)]}{|\Delta x_1 \cdots \Delta x_j|}\\
    &=\lim_{\Delta x_1,...,\Delta x_j\to 0} \frac{\mathbb E[N_Q(\xi_1,\xi_1+\Delta \xi_1)\cdots N_Q(\xi_j,\xi_j+\Delta \xi_j)]}{|\Delta x_1 \cdots \Delta x_j|}\\
    &=\lim_{\Delta x_1,...,\Delta x_j\to 0} \frac{|\Delta \xi_1 \cdots \Delta \xi_j|}{|\Delta x_1 \cdots \Delta x_j|}\frac{\mathbb E[N_Q(\xi_1,\xi_1+\Delta \xi_1)\cdots N_Q(\xi_j,\xi_j+\Delta \xi_j)]}{|\Delta \xi_1 \cdots \Delta \xi_j|}\\
    &= \rho(x_1)\cdots \rho(x_j) \rho_j(\xi_1,...,\xi_j),
\end{align*}
where $\rho_j$ is the $j$-point correlation function of the real zeros of $Q$. Together with \eqref{e3.1}, we deduce that 
\[
\widetilde \rho_j^T(x_1,...,x_j)=\rho(x_1)\cdots \rho(x_j) \rho_j^T(\varrho(x_1),...,\varrho(x_j)),\quad j=1,...,m.
\]
Therefore, 
\begin{align*}
\int_{(a,b)^j}\widetilde \rho_j^T(x_1,...,x_j)dx_1\cdots dx_j &=\int_{(a,b)^j}\rho(x_1)\cdots \rho(x_j)\rho_j^T(\varrho(x_1),...,\varrho(x_j))dx_1\cdots dx_j \\
&=\int_{(\varrho(a),\varrho(b))^j}\rho_j^T(\xi_1,...,\xi_j)d\xi_1\cdots d\xi_j \\
&=\int_{(\varrho(a),\varrho(b))^j}\rho_j^T(\xi_1-\varrho(a),...,\xi_j-\varrho(a))d\xi_1\cdots d\xi_j \\
&=\int_{(0,R)^j}\rho_j^T(\xi_1,...,\xi_j)d\xi_1\cdots d\xi_j \\
&= R\hat \theta_j(R)+\hat\lambda_j(R),
\end{align*}
where $R:=\varrho(b)-\varrho(a)=\int_a^b\rho(t)dt$ and we used \eqref{e3.6} and the fact that $\rho^T_j$ is translation-invariant. According to Lemma \ref{L3.1}, we have 
\[
s_m[N_P(a,b)]=\sum_{\gamma \in \Pi(m)}[R\hat \theta_{|\gamma|}(R)+\hat\lambda_{|\gamma|}(R)]= R\theta_m^Q(R)+\lambda_m^Q(R),
\]
which establishes \eqref{e1.29}.
\subsection{Correlation functions of real zeros of elliptic polynomials} \label{S3.6}
We denote by $\rho_{n,k}$ and $\rho^T_{n,k}$ the $k$-point and truncated $k$-point correlation functions for the real zeros of the  Gaussian elliptic polynomial $P_n(x)$, respectively. As shown in \cite{BD}, for any $\pmb\xi=(\xi_1,...,\xi_k)$ of distinct points in $\mathbb R$, it holds that
\begin{equation}
\label{e3.10}
    \rho_{n,k}(\pmb\xi)=\prod_{j=1}^k\left(\frac{\sqrt n}{1+\xi_j^2}\right)\int_{\mathbb R^k}|y_1\cdots y_k|D_{n,k}(\pmb y;\pmb \xi)dy_1\cdots dy_k,
\end{equation}
where $\pmb y=(0,y_1,...,0,y_k)\in \mathbb R^{2k}$ and $D_{n,k}(\cdot;\pmb\xi)$ is a Gaussian density with the covariance matrix
\[\Sigma_n(\pmb \xi)=\left(\Sigma_{ij}^{(n)}\right)_{i,j=1}^k,\]
in which 
\begin{equation}\label{e3.11}
    \Sigma_{ij}^{(n)}=(1+\alpha^2(\xi_i,\xi_j))^{-n/2}\begin{pmatrix}
    1&-\sqrt n \alpha(\xi_i,\xi_j)\\
    \sqrt n \alpha(\xi_i,\xi_j)&1+(1-n)\alpha^2(\xi_i,\xi_j)
    \end{pmatrix}.
\end{equation}
In particular, 
\[\rho_{n,1}^T(\xi_1)=\rho_{n,1}(\xi_1)=\frac{\sqrt n}{\pi(1+\xi_1^2)}.\]
For $k\ge 2$, to find a scaling limit of $\rho_{n,k}^T$, let us make the change of variables
\begin{equation}
\label{e3.12}
    t_j=\sqrt n \alpha(\xi_1,\xi_j)=\sqrt{n}\frac{\xi_j-\xi_1}{1+\xi_1\xi_j},\quad j=2,...,k,
\end{equation}
so that
\begin{equation}
    \label{e3.13}
    \alpha(\xi_i,\xi_j)=\alpha(t_i/\sqrt n,t_j/\sqrt n),\quad i,j=2,...,k.
\end{equation}
For $\pmb t=(t_1,...,t_k)$, let $\pmb t_0=(0,t_2,...,t_k)$. Then the integral $\int_{\mathbb R^k}|y_1\cdots y_k|D_{n,k}(\pmb y; \pmb \xi)dy_1\cdots dy_k$ appeared in \eqref{e3.10} can be interpreted as a function of $\pmb t_0$. More precisely, by letting $t_1=0$, we deduce from \eqref{e3.11} and \eqref{e3.13} that 
\begin{align*}
\int_{\mathbb R^k}|y_1\cdots y_k|D_{n,k}(\pmb y;\pmb\xi)dy_1\cdots dy_k=\int_{\mathbb R^k}|y_1\cdots y_k|d_{n,k}(\pmb y;\pmb t_0)dy_1\cdots dy_k,
\end{align*}
in which $d_{n,k}(\cdot;\pmb t)$ is a  Gaussian density  with the covariance matrix
\[\Omega_n(\pmb t)=\left(\Omega_{ij}^{(n)}\right)_{i,j=1}^k,\]
where
\begin{equation}
    \label{e3.14}
    \Omega_{ij}^{(n)}=\left(1+\alpha^2\left(\frac{t_i}{\sqrt n},\frac{t_j}{\sqrt n}\right)\right)^{-n/2} \begin{pmatrix}
    1&-\sqrt n \alpha\left(\frac{t_i}{\sqrt n},\frac{t_j}{\sqrt n}\right)\\
    \sqrt n \alpha\left(\frac{t_i}{\sqrt n},\frac{t_j}{\sqrt n}\right)&1+(1-n)\alpha^2\left(\frac{t_i}{\sqrt n},\frac{t_j}{\sqrt n}\right)
    \end{pmatrix}.
\end{equation}
For $\gamma_j\subset \{1,...,k\}$ with $l_j=|\gamma_j|\ge 1$, let us introduce the $l_j$-point functions
\[
    \Theta_{n,l_j}(\pmb t_{\gamma_j})=\int_{\mathbb R^{l_j}}|y_1\cdots y_{l_j}|d_{n,l_j}(\pmb y_{\gamma_j};\pmb t_{\gamma_j})dy_1\cdots dy_{l_j},
\]
where $\pmb y_{\gamma_j}=(0,y_1,...,0,y_{l_j})$ and $\pmb t_{\gamma_j}=(t_i)_{i\in \gamma_j}$.  We also consider the corresponding truncated functions
\[
  \Theta^T_{n,k}(\pmb t)=\sum_{j=1}^k(-1)^{j-1}(j-1)!\sum_{\gamma\in \Pi(k,j)}\Theta_{n,l_1}(\pmb t_{\gamma_1})\cdots \Theta_{n,l_j}(\pmb t_{\gamma_j}).  
\]
Put this way, one has 
\begin{equation}
\label{e3.15}
    \rho_{n,k}^T(\pmb \xi) =\left(\prod_{j=1}^k\frac{\sqrt n}{1+\xi_j^2}\right)\Theta^T_{n,k}(\pmb t_0).
\end{equation}
\begin{lemma} \label{L3.8}
For $k\ge 1$, let $\Theta_k^T$ denote the truncated $k$-point correlation function for the real zeros of the Weyl series $W$. As $n\to \infty$, $\Theta_{n,k}^T(\pmb t)$ converges pointwise to $\Theta_k^T(\pmb t)$ on $\mathbb R^k$. Furthermore, if $0<\alpha_n<\sqrt n$, it holds uniformly that 
\begin{equation} \label{e3.16}
    \Theta^T_{n,k}(\pmb t) =\Theta_k^T(\pmb t)+O(\alpha_n^2/n),\quad \pmb t\in [-\alpha_n,\alpha_n]^k.
\end{equation}
\end{lemma}
\begin{proof}
    For any $\pmb t=(t_1,...,t_k)\in \mathbb R^k$, it holds that 
    \[
    \lim_{n\to \infty}\sqrt n \alpha\left(\frac{t_i}{\sqrt n},\frac{t_j}{\sqrt n}\right) =t_j-t_i,\quad 1\le i,j\le k.
    \]
    Combining this with \eqref{e3.14} leads to
\begin{equation} \label{e3.17}
    \lim_{n\to \infty}\Omega_n(\pmb t) =\Omega(\pmb t),
\end{equation}
where
\begin{equation}
    \label{e3.18}
    \Omega(\pmb t)=(\Omega_{ij})_{i,j=1}^k\quad \text{with}\quad \Omega_{ij}=e^{-(t_j-t_i)^2/2}\begin{pmatrix}
    1&-(t_j-t_i)\\
    t_j-t_i&1-(t_j-t_i)^2
    \end{pmatrix}.
\end{equation}
Let $\Theta_k$ denote the $k$-point correlation function for the real zeros of the Weyl series $W$. Since $\widetilde W(t)=e^{-t^2/2}W(t)$ and $W(t)$ have the same real zeros, it follows that $\Theta_k$ is also the $k$-point correlation function for the real zeros of $\widetilde W$. Applying the Kac-Rice formula and \eqref{e3.18}, we verify that
\[
\Theta_k(\pmb t)=\int_{\mathbb R^k}|y_1\cdots y_k|d_{k}(\pmb y;\pmb t)dy_1\cdots dy_k,
\]
where $d_{k}(\cdot;\pmb t)$ is a Gaussian density with the covariance matrix $\Omega(\pmb t)$. Therefore, \eqref{e3.17} implies
\[
\lim_{n\to \infty}\Theta_{n,k}(\pmb t)= \Theta_k(\pmb t),\quad \pmb t\in \mathbb R^k.
\]
Together with \eqref{e3.1}, we deduce that 
\[
\lim_{n\to \infty}\Theta_{n,k}^T(\pmb t)= \Theta_k^T(\pmb t),\quad \pmb t\in \mathbb R^k,
\]
as required.

For $\pmb t=(t_1,...,t_k)\in [-\alpha_n,\alpha_n]^k$, since
\[
\sqrt n \alpha\left(\frac{t_i}{\sqrt n},\frac{t_j}{\sqrt n}\right) =(t_j-t_i)(1+O(\alpha_n^2/n)),\quad 1\le i,j\le k,
\]
it follows that 
\[
    \Omega_n(\pmb t) =\Omega(\pmb t) (1+O(\alpha_n^2/n)),
\]
and consequently,
\[
\Theta_{n,k}(\pmb t)= \Theta_k(\pmb t)(1+O(\alpha_n^2/n)), \quad \pmb t \in [-\alpha_n,\alpha_n]^k.
\]
By \eqref{e3.1} and the uniform boundedness of $\Theta_j$ on $\mathbb R^j$, for $j=1,...,k$,  we establish \eqref{e3.16}.
\end{proof}
Henceforth, for $\pmb t=(t_1,t_2,...,t_k)\in \mathbb R^{k}$, define
\begin{equation}
\label{e3.19}
    \widetilde \Theta_{n,k}^T(\pmb t)=\frac{\Theta_{n,k}^T(\pmb t)}{\prod_{j=1}^k(1+t_j^2/n)}.
\end{equation}
\begin{lemma} \label{L3.9} 
For $\alpha_n>0$, let $U_n=(-\alpha_n,\alpha_n)^{k-1}$ and $A_n=\min(\alpha_n,\sqrt{n})$. As $n\to \infty$, if $\alpha_n\to \infty$, then 
    \begin{equation}
        \label{e3.20} \int_{U_n}\widetilde \Theta_{n,k}^T(\pmb t_0)d\pmb t_0 =\int_{\mathbb R^{k-1}}\Theta_{k}^T(\pmb t_0)d\pmb t_0+o(1/A_n),
    \end{equation}
    where $\pmb t_0=(0,t_2,...,t_k)$ and $d\pmb t_0=dt_2\cdots dt_k$.
\end{lemma}
\begin{proof} 
Let $U_n^c=\mathbb R^{k-1}\backslash U_n$.  By Lemmas \ref{L3.6}, \ref{L3.7}, and \ref{L3.8}, there exist positive constants $c_k$ and $C_k$ such that 
 \begin{align*}
     \int_{U_n^c}|\widetilde \Theta_{n,k}^T(\pmb t_0)|d\pmb t_0 &\le C_k\int_{U_n^c}e^{-c_k(\diam(\pmb t_0))^2}d\pmb t_0\\
     &\ll \int_{\alpha_n}^\infty e^{-c_kx^2}x^{k-2}dx\\
     &=O(\alpha_n^{k-3}e^{-c_k\alpha_n^2}).
 \end{align*}
 
 Assume first that $\alpha_n \le \log n$, so $\alpha_n^{k+1}/n=o(1/A_n)$. By \eqref{e3.16},  
\begin{align*}
   \int_{U_n}\widetilde \Theta_{n,k}^T(\pmb t_0)d\pmb t_0 &=\int_{U_n}\Theta_{k}^T(\pmb t_0)d\pmb t_0+O(\alpha_n^{k+1}/n)\\
   &= \int_{\mathbb R^{k-1}}\Theta_{k}^T(\pmb t_0)d\pmb t_0 +O(\alpha_n^{k-3}e^{-c_k\alpha_n^2})+O(\alpha_n^{k+1}/n),
\end{align*}
which implies \eqref{e3.20}. 

Now, let us suppose $\alpha_n>\log n$. Define $V_n=(-\log n, \log n)^{k-1}$ and $V_n'=U_n\backslash V_n$. Utilizing the aforementioned estimates, we obtain
\begin{align*}
   \int_{U_n}\widetilde \Theta_{n,k}^T(\pmb t_0)d\pmb t_0 &=\int_{V_n}\widetilde \Theta_{n,k}^T(\pmb t_0)d\pmb t_0 +\int_{V_n'}\widetilde \Theta_{n,k}^T(\pmb t_0)d\pmb t_0\\
   &= \int_{\mathbb R^{k-1}}\Theta_{k}^T(\pmb t_0)d\pmb t_0+O((\log n)^{k+1}/n) + O((\log n)^{k-3}e^{-c_k\log^2n}),
\end{align*}
establishing \eqref{e3.20}.
\end{proof}
\begin{remark}\label{R3.10}
   Similarly, for any polynomial $p(\mathbf{t}_0)$, we have
    \[
    \int_{\mathbb R^{k-1}}|\widetilde \Theta_{n,k}^T(\pmb t_0)p(\pmb t_0)|d\pmb t_0=O(1).
    \]
\end{remark}
\subsection{Proof of Theorem \ref{T1.3}} \label{S3.7}
Our proof commences by recalling the relationship between $s_k[N_n(a,b)]$ and truncated correlation functions $\rho_{n,j}^T$, where $j\in \{1,...,k\}$. According to Lemma \ref{L3.1}, we have  
\begin{equation}
\label{e3.21}
    s_k[N_n(a,b)]=\sum_{\gamma\in \Pi(k)}\int_{(a,b)^{|\gamma|}}\rho^T_{n,|\gamma|}(\pmb \xi_\gamma)d\pmb\xi_\gamma,
\end{equation}
where $|\gamma|$ is the number of blocks in the partition $\gamma$ and $d\pmb\xi_\gamma$ is the Lebesgue measure on $(a,b)^{|\gamma|}$. The subsequent step involves the estimation of the integrals present on the right-hand side of \eqref{e3.21}.
\begin{lemma} \label{L3.11}
For $k\ge 1$, we have, as $n\to \infty$,
\begin{equation}\label{e3.22}
    \int_{(a,b)^k}\rho_{n,k}^T(\pmb \xi)d\pmb \xi =\theta_k \mathbb E[N_n(a,b)]+O(1),
\end{equation}
where $\theta_1=1$ and for $k\ge 2$, 
\[
    \theta_k=\pi \int_{\mathbb R^{k-1}}\Theta^T_k(\pmb t_0)d\pmb t_0.
\]
\end{lemma}
Note that Theorem \ref{T1.3} immediately follows from applying Lemma \ref{L3.11} and \eqref{e3.21}. More precisely, by substituting \eqref{e3.22} into \eqref{e3.21}, we obtain \eqref{e1.16}, where
\[
\beta_k:=\sum_{\gamma\in \Pi(k)}\theta_{|\gamma|}=\pi \theta_{k,\infty}^W.
\]

For $k=1$, Lemma \ref{L3.11} is trivial. Assume now that $k\ge 2$. Making the change of variables \eqref{e3.12},  we see that 
\[
    \frac{\sqrt n}{1+\xi_j^2}d\xi_j=\frac{1}{1+t_j^2/n}dt_j,\quad j=2,...,k.
\]
Combining this with \eqref{e3.15}, we derive
\begin{equation}
 \label{e3.23}
    \rho_{n,k}^T(\pmb \xi)d\pmb \xi =\frac{\sqrt n}{1+\xi_1^2}  \widetilde\Theta^T_{n,k}(\pmb t_0)  d\xi_1d\pmb t_0,
\end{equation}
where $\widetilde\Theta^T_{n,k}$ is given by \eqref{e3.19}, $\pmb t_0=(0,t_2,...,t_k)$, and $d\pmb t_0=dt_2\cdots dt_k$.

To prove Lemma \ref{L3.11}, we consider three cases of $\alpha_n=\sqrt n\alpha(a,b)$. For brevity, let $U_n=(-|\alpha_n|,|\alpha_n|)^{k-1}$ and $I_{n,k}(a,b)=\int_{(a,b)^k}\rho_{n,k}^T(\pmb \xi)d\pmb \xi$.
\begin{claim}\label{cl1}
If $\alpha_n>0$, then \eqref{e3.22} holds. 
\end{claim}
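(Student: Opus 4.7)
The plan is to apply Fubini's theorem to \eqref{3.18} and then estimate the resulting integral through three sources of error. Since $\alpha_n>0$ forces $1+\xi_1\xi_j>0$ throughout $(a,b)^2$, the map $\xi_j\mapsto t_j=\sqrt n\,\alpha(\xi_1,\xi_j)$ is, for each fixed $\xi_1\in(a,b)$, a smooth increasing bijection of $(a,b)$ onto $(A(\xi_1),B(\xi_1))$, with $A(\xi_1):=\sqrt n\,\alpha(\xi_1,a)<0$ and $B(\xi_1):=\sqrt n\,\alpha(\xi_1,b)>0$. Setting $D_{\xi_1}:=(A(\xi_1),B(\xi_1))^{k-1}$ and
\[
J_n(\xi_1):=\int_{D_{\xi_1}}\Theta_{n,k}^T(0,t_2,\dots,t_k)\prod_{j=2}^k(1+t_j^2/n)^{-1}\,dt_2\cdots dt_k,
\]
Fubini and \eqref{3.18} yield $I_{n,k}(a,b)=\int_a^b\sqrt n(1+\xi_1^2)^{-1}J_n(\xi_1)\,d\xi_1$. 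Since $\pi\E[N_n(a,b)]=\int_a^b\sqrt n(1+\xi_1^2)^{-1}\,d\xi_1$, the claim reduces to showing
\[
\int_a^b\frac{\sqrt n}{1+\xi_1^2}\bigl[\pi J_n(\xi_1)-C_k\bigr]\,d\xi_1=O(1).
\]

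I would next decompose $\pi J_n(\xi_1)-C_k=\pi(E_1+E_2+E_3)$, where $E_1$ is the weight-approximation error from replacing $\prod_{j}(1+t_j^2/n)^{-1}$ by $1$ inside $D_{\xi_1}$, $E_2$ is the correlator-approximation error from replacing $\Theta_{n,k}^T$ by the scaling limit $\Theta_k^T$ inside $D_{\xi_1}$, and $E_3=-\int_{\mathbb R^{k-1}\setminus D_{\xi_1}}\Theta_k^T(0,t_2,\dots,t_k)\,dt_2\cdots dt_k$ is the domain-restriction error coming from enlarging $D_{\xi_1}$ to $\mathbb R^{k-1}$ to match the definition of $C_k$.

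For $E_1$ and $E_2$, a cut-off at $|t|\le L:=k\sqrt{\log n}$ is decisive. On the bounded part the weight and the covariance deviations are both of size $O(L^2/n)$, so mimicking the argument leading to \eqref{3.15} at the level of the $k$-point Gaussian density gives an error $O(L^2/n)$ times the $L^1$-norm of $\Theta_k^T$; on the unbounded part, the super-exponential decay of $\Theta_k^T$ (Lemma \ref{lem3.1}) and of its $n$-dependent analogue (via \eqref{3.5} and Lemma \ref{lem3.3}) contributes only $O(e^{-L^2/k^2})=O(1/n)$. Together these give $|E_1|,|E_2|=O(\log n/n)$ uniformly in $\xi_1$, so their integrated contributions are bounded by $\pi\E[N_n(a,b)]\cdot O(\log n/n)=O(\log n/\sqrt n)=o(1)$. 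For $E_3$, the tail estimate in Lemma \ref{lem3.1} gives $|E_3|\le Ce^{-m(\xi_1)^2/k^2}$ with $m(\xi_1):=\min(B(\xi_1),|A(\xi_1)|)$. After the change of variables $u=\arctan\xi_1$, the integrated contribution becomes $(\sqrt n/\pi)\int_\alpha^\beta e^{-m(u)^2/k^2}du$, with $\alpha=\arctan a$, $\beta=\arctan b$, and $m(u)=\sqrt n\min(\tan(u-\alpha),\tan(\beta-u))$; splitting the interval at its midpoint and using $\tan x\ge 2x/\pi$ on $(0,\pi/4)$ reduces each half to a Gaussian integral in a rescaled variable, producing an $O(1)$ bound.

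The main obstacle is upgrading the ingredients of Lemmas \ref{lem3.1} and \ref{lem3.3} to the uniform-in-$n$ $k$-point estimates used above, in particular the super-exponential tail $|\Theta_{n,k}^T(0,t_2,\dots,t_k)|\lesssim\prod_{j=2}^k e^{-t_j^2/k^2}$ throughout the relevant range of $t$ and the finite-difference bound $|\Theta_{n,k}^T-\Theta_k^T|\lesssim(|t|^2/n)|\Theta_k^T|$ on compact pieces; both follow the Gaussian-density route already used in the paper, but require careful bookkeeping with the matrix $\Omega_n$ of \eqref{3.5}. The regime in which $\alpha_n$ stays bounded is in fact subsumed: there $\E[N_n(a,b)]=O(1)$, and the uniform integrability of $\Theta_{n,k}^T$ (Remark \ref{rmk3.2}) yields $|I_{n,k}(a,b)|=O(1)$ directly, matching $C_k\E[N_n(a,b)]+O(1)$.
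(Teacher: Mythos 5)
Your plan is correct in outline, but it is organized dually to the paper's argument, and the comparison is instructive. The paper applies Fubini in the opposite order: it integrates over $\xi_1$ \emph{first}, for fixed $(t_2,\dots,t_k)$, and observes that this inner integral can be evaluated exactly as $\pi\E[N_n(a,b)]$ times a product of indicators plus a correction $\sqrt n\bigl(\arctan(t_{\lambda_1}^{\min}/\sqrt n)-\arctan(t_{\lambda_2}^{\max}/\sqrt n)\bigr)$, which is bounded by a polynomial in the $t_j$'s; Remark \ref{rmk3.2} then disposes of the correction in one stroke, and the main term is matched to $C_k$ via \eqref{3.12}, \eqref{3.14}, \eqref{3.15} exactly as in your $E_1,E_2$ analysis. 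Your boundary term $E_3$ (the deficit $\mathbb R^{k-1}\setminus D_{\xi_1}$ seen from fixed $\xi_1$) is precisely the paper's arctangent correction seen from fixed $t$; your Gaussian-integral bound $\sqrt n\int_\alpha^\beta e^{-m(u)^2/k^2}\,du=O(1)$ after $u=\arctan\xi_1$ is a legitimate substitute for the paper's polynomial-weight argument, and both approaches carry the same underlying technical debt, namely uniform-in-$n$ versions of the clustering and approximation estimates for $\Theta^T_{n,k}$ (the paper itself invokes Remark \ref{rmk3.2}, stated for $\Theta^T_k$, when integrating against $\Theta^T_{n,k}$). What the paper's ordering buys is that no pointwise-in-$\xi_1$ control is ever needed; what yours buys is a more transparent identification of where the $O(1)$ comes from.

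One imprecision to repair: the uniform tail bound $|\Theta^T_{n,k}(0,t_2,\dots,t_k)|\lesssim\prod_{j=2}^k e^{-t_j^2/k^2}$ cannot hold throughout $D_{\xi_1}$ when $\alpha_n>\sqrt n$ (e.g.\ fixed $a,b$), since for $|t_i|,|t_j|\gg\sqrt n$ the relevant separation in \eqref{3.5} is $(t_j-t_i)/(1+t_it_j/n)$, not $t_j-t_i$, and the correct decay is of the type $(1+t_j^2/n)^{-n/k^2}$ as in \eqref{3.14}. This does not damage your plan: beyond the cut-off $L=k\sqrt{\log n}$ that weaker decay still yields $(1+L^2/n)^{-n/k^2}=n^{-1+o(1)}$, which is what your estimate actually uses.
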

\begin{proof}
Using \eqref{e3.23}, we obtain
\begin{align*}
I_{n,k}(a,b)&=\int_a^b\frac{\sqrt n }{1+\xi_1^2}d\xi_1 \int_{U_n(\xi_1)}\widetilde\Theta^T_{n,k}(\pmb t_0) d\pmb t_0,
\end{align*}
where 
\[
U_n(\xi_1):=\left\{(t_2,...,t_k)\in (a,b)^{k-1}: \sqrt n \alpha(\xi_1,a)<t_2,...,t_k<\sqrt n\alpha(\xi_1,b) \right\}.
\]
By Fubini's theorem, 
\begin{equation*}
I_{n,k}(a,b)=\int_{U_n}\widetilde\Theta^T_{n,k}(\pmb t_0) d\pmb t_0 \int_a^b \sqrt n\frac{G(\xi_1,t_2,...,t_k)}{1+\xi_1^2}d\xi_1,
\end{equation*}
where
\begin{equation*}
G(\xi_1,t_2,...,t_k):=\prod_{j=2}^k\bigg(\pmb 1_{(-\alpha_n,0)}(t_j)\pmb 1_{(\alpha(t_j/\sqrt n,a),b)}(\xi_1)+\pmb 1_{(0,\alpha_n)}(t_j)\pmb 1_{(a,\alpha(t_j/\sqrt n,b))}(\xi_1)\bigg).
\end{equation*}

 For $k\ge 2$, let $\Pi_2(k)$ denote the set of all ordered pair $(\gamma_1,\gamma_2)$ of disjoint subsets of $ \{2,...,k\}$ such that  $\gamma_1\cup\gamma_2=\{2,...,k\}$. For each  $\gamma=(\gamma_1,\gamma_2)\in \Pi_2(k)$, we introduce the function
\begin{equation*}
G_{\gamma}(\xi_1,t_2,...,t_k):=\prod_{j\in \gamma_1}\pmb 1_{(-\alpha_n,0)}(t_j)\pmb 1_{(\alpha(t_j/\sqrt n,a),b)}(\xi_1) \prod_{i\in \gamma_2}\pmb 1_{(0,\alpha_n)}(t_i)\pmb 1_{(a,\alpha(t_i/\sqrt n,b))}(\xi_1)
\end{equation*}
so that
\begin{equation*}
I_{n,k}(a,b)=\int_{U_n}\widetilde\Theta^T_{n,k}(\pmb t_0) d\pmb t_0 \sum_{\gamma\in \Pi_2(k)}\int_a^b\sqrt n \frac{G_\gamma(\xi_1,t_2,...,t_k)}{1+\xi_1^2}d\xi_1.
\end{equation*}
For each $\gamma=(\gamma_1,\gamma_2)\in \Pi_2(k)$ and $(t_2,...,t_k)\in \mathbb R^{k-1}$, let
\[
t_{\gamma_1}^{\min}=\begin{cases}
0&\text{if}\quad \gamma_1=\varnothing,\\
\min_{j\in \gamma_1}t_j&\text{if}\quad \gamma_1\ne \varnothing,
\end{cases} \quad \text{and}\quad t_{\gamma_2}^{\max}=\begin{cases}
0&\text{if}\quad \gamma_2=\varnothing,\\
\max_{i\in \gamma_2}t_i&\text{if}\quad \gamma_2\ne \varnothing.
\end{cases}
\]
By a direct computation, we obtain
\begin{align*}
&\int_a^b \sqrt n\frac{G_{\gamma}(\xi_1,t_2,...,t_k)}{1+\xi_1^2}d\xi_1\\
&\quad = \pi \prod_{j\in \gamma_1}\pmb 1_{(-\alpha_n,0)}(t_j)\prod_{i\in \gamma_2}\pmb 1_{(0,\alpha_n)}(t_i)  \mathbb E[N_n(a,b)]\\
&\quad +\prod_{j\in \gamma_1}\pmb 1_{(-\alpha_n,0)}(t_j)\prod_{i\in \gamma_2}\pmb 1_{(0,\alpha_n)}(t_i)\sqrt n\left(\arctan\frac{t_{\gamma_1}^{\min}}{\sqrt n}-\arctan \frac{t_{\gamma_2}^{\max}}{\sqrt n}\right).
\end{align*}
For any fixed $(t_2,...,t_k)\in \mathbb R^{k-1}$, we have
\[
\lim_{n\to \infty} \sqrt n\left(\arctan\frac{t_{\gamma_1}^{\min}}{\sqrt n}-\arctan \frac{t_{\gamma_2}^{\max}}{\sqrt n}\right) =t_{\gamma_1}^{\min}-t_{\gamma_2}^{\max}.
\]
Therefore, using Remark \ref{R3.10}, we deduce that, as $n\to \infty$,
\[
I_{n,k}(a,b)=\left(\pi \int_{U_n}\widetilde\Theta^T_{n,k}(\pmb t_0) d\pmb t_0\right) \mathbb E[N_n(a,b)]+O(1).
\]
Since $\alpha_n>0$, we have 
\[
\mathbb E[N_n(a,b)]=\frac{\sqrt n}{\pi}\arctan\frac{\alpha_n}{\sqrt n} \le \min\left(\frac{\alpha_n}{\pi},\frac{\sqrt n}{2}\right) \ll A_n.
\]
Combining this with Lemma \ref{L3.9}, we deduce that
\[
I_{n,k}(a,b)=\left(\pi \int_{\mathbb R^{k-1}}\Theta_k^T(\pmb t_0)dt_2\cdots dt_k+o(1/A_n)\right) \mathbb E[N_n(a,b)]+O(1),
\]
and \eqref{e3.22} follows.
\end{proof}
\begin{claim} The asymptotic formula \eqref{e3.22} holds for  $\alpha_n=0$. 
\end{claim}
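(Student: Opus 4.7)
The plan is to follow the same blueprint as Claim \ref{cl1}, but the unbounded interval $(a,b)=\mathbb{R}$ permits a decisive simplification: for each fixed $\xi_1$, the change of variables $t_j=\sqrt n\,\alpha(\xi_1,\xi_j)$ is a bijection of $\mathbb{R}$ onto $\mathbb{R}$ up to a measure zero set, so the image region no longer depends on $\xi_1$. Combining this with \eqref{3.18} and Fubini yields the fully factorized form
\begin{equation*}
I_{n,k}(\mathbb{R})=\left(\int_{\mathbb{R}}\frac{\sqrt n}{1+\xi_1^2}\,d\xi_1\right)\int_{\mathbb{R}^{k-1}}\frac{\Theta^T_{n,k}(0,t_2,\dots,t_k)}{\prod_{j=2}^k(1+t_j^2/n)}\,dt_2\cdots dt_k,
\end{equation*}
in which the first factor equals $\pi\sqrt n = \pi\,\E[N_n(\mathbb{R})]$ by \eqref{1.1}. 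The task thus reduces to showing that the inner integral equals $C_k/\pi$ with an error that stays $O(1)$ after multiplication by $\pi\sqrt n$.

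To accomplish this, I would introduce a slowly growing cutoff $M_n=\log n$ and split the $t$-integral over the cube $(-M_n,M_n)^{k-1}$ and its complement. On the inner cube, $\prod_{j=2}^k(1+t_j^2/n)^{-1}=1+O(\log^2 n/n)$ uniformly, and Lemma \ref{lem3.3} replaces $\Theta^T_{n,k}$ by $\Theta^T_k$ at the same order. Extending the resulting $\Theta^T_k$-integral from the cube back to $\mathbb{R}^{k-1}$ via Lemma \ref{lem3.1} costs only $O(e^{-\log^2 n/k^2})$, so the inner contribution equals $C_k/\pi + O(\log^2 n/n)$. The outer contribution is dominated by \eqref{3.14}, which bounds it by $O((1+\log^2 n/n)^{-n/k^2})=O(n^{-\log n/k^2})$, super-polynomially small. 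Combining these estimates and multiplying by $\pi\sqrt n$ produces $I_{n,k}(\mathbb{R})=C_k\sqrt n+O(\log^2 n/\sqrt n)=C_k\,\E[N_n(\mathbb{R})]+O(1)$, which is exactly \eqref{3.17}.

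The only point that requires some real care is the justification of the change of variables: for $\xi_1\ne 0$ the map $\xi_j\mapsto\sqrt n\,\alpha(\xi_1,\xi_j)$ has a pole at $\xi_j=-1/\xi_1$, but its derivative $\sqrt n(1+\xi_1^2)/(1+\xi_1\xi_j)^2$ is strictly positive on each side of the pole and the two monotone branches together sweep out $\mathbb{R}$ minus a single point, so the total $t_j$-measure still equals that of $\mathbb{R}$. The case $\xi_1=0$ reduces to the trivial substitution $t_j=\sqrt n\,\xi_j$. Once this is in place, no analytical estimate beyond those already furnished by Lemmas \ref{lem3.1} and \ref{lem3.3} is required; the main obstacle is really just the bookkeeping of making the various approximation errors compatible and small enough to survive multiplication by the large factor $\pi\sqrt n$, which is what dictates the choice of cutoff $M_n=\log n$.
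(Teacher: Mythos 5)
Your proof is correct and follows essentially the same route as the paper: factorize via the change of variables (possible because the $t$-region is all of $\mathbb{R}^{k-1}$ when $(a,b)=\mathbb{R}$), truncate to the cube $(-\log n,\log n)^{k-1}$ using \eqref{3.14}, and then apply \eqref{3.15} and Lemma \ref{lem3.1} to identify the limit $C_k/\pi$ with errors that remain $o(1)$ after multiplication by $\pi\sqrt n$. Your additional care about the pole of $\xi_j\mapsto\sqrt n\,\alpha(\xi_1,\xi_j)$ at $\xi_j=-1/\xi_1$ is a detail the paper leaves implicit, but it does not change the argument.
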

\begin{proof}
If $\alpha_n=0$, then $(a,b)=\mathbb R$. Using \eqref{e3.23} and Lemma \ref{L3.9}, we have 
\begin{align*}
     I_{n,k}(\mathbb R)&= \int_{\mathbb R}\frac{\sqrt n }{1+\xi_1^2}d\xi_1 \int_{\mathbb R^{k-1}}\widetilde\Theta^T_{n,k}(\pmb t_0) d\pmb t_0\\
     &= \left(\pi \int_{\mathbb R^{k-1}}\widetilde\Theta^T_{n,k}(\pmb t_0) d\pmb t_0\right) \mathbb E[N_n(\mathbb R)]\\
     &=\left(\pi \int_{\mathbb R^{k-1}}\Theta_k^T(\pmb t_0)d\pmb t_0+o(1/\sqrt n)\right) \mathbb E[N_n(\mathbb R)]\\
     &=\left(\pi \int_{\mathbb R^{k-1}}\Theta_k^T(\pmb t_0)d\pmb t_0\right) \mathbb E[N_n(\mathbb R)]+o(1),
     \end{align*}
which yields \eqref{e3.22}.
\end{proof}
\begin{claim} If $\alpha_n<0$, then \eqref{e3.22} occurs.
\end{claim} 
\begin{proof}
We first write 
\begin{align*}
   I_{n,k}(a,b)&=\int_a^{-1/b}d\xi_1\int_{(a,b)^{k-1}}\rho_{n,k}^T(\pmb\xi)d\xi_2 \cdots d\xi_k\\
   &+\int_{-1/b}^{-1/a}d\xi_1\int_{(a,b)^{k-1}}\rho_{n,k}^T(\pmb \xi)d\xi_2 \cdots d\xi_k+\int_{-1/a}^bd\xi_1\int_{(a,b)^{k-1}}\rho_{n,k}^T(\pmb \xi)d\xi_2 \cdots d\xi_k.
\end{align*}
By \eqref{e3.23} and Fubini's theorem,  
\[
I_{n,k}(a,b)=\int_{\mathbb R^{k-1}}\widetilde\Theta^T_{n,k}(\pmb t_0) d\pmb t_0 \int_a^b\sqrt n \frac{H(\xi_1,t_2,...,t_k)}{1+\xi_1^2}d\xi_1,
\]
where
\begin{align*}
    H(\xi_1,t_2,...,t_k)&=\pmb 1_{(a,-1/b)}(\xi_1) \prod_{j=2}^{k}\bigg(\pmb 1_{(-\infty,\alpha_n)}(t_j)\pmb 1_{(a,\alpha(t_j/\sqrt n,b))}(\xi_1)\\
    &\qquad\qquad +\pmb 1_{(n/\alpha_n,0)}(t_j)\pmb 1_{(\alpha(t_j/\sqrt n,a),-1/b)}(\xi_1)+\pmb 1_{(0,\infty)}(t_j)\bigg)\\
    &+\pmb 1_{(-1/b,-1/a)}(\xi_1) \prod_{j=2}^{k}\bigg(\pmb 1_{(-\infty,n/\alpha_n)}(t_j)\pmb 1_{(\alpha(t_j/\sqrt n,a),-1/a)}(\xi_1)\\
    &\qquad\qquad+\pmb 1_{(n/\alpha_n,-n/\alpha_n)}(t_j)+\pmb 1_{(-n/\alpha_n,\infty)}(t_j)\pmb 1_{(-1/b,\alpha(t_j/\sqrt n,b))}(\xi_1)\bigg)\\
    &+\pmb 1_{(-1/a,b)}(\xi_1) \prod_{j=2}^{k}\bigg(\pmb 1_{(-\infty,0)}(t_j)+\pmb 1_{(0,-n/\alpha_n)}(t_j)\pmb 1_{(-1/a,\alpha(t_j/\sqrt n,b))}(\xi_1)\\
    &\qquad\qquad+\pmb 1_{(-\alpha_n,\infty)}(t_j)\pmb 1_{(\alpha(t_j/\sqrt n,a),b)}(\xi_1)\bigg). 
\end{align*}
We may now estimate $I_{n,k}(a,b)$ using the same method as in the proof of Claim \ref{cl1} to conclude that
\[
I_{n,k}(a,b)=\left(\pi \int_{\mathbb R^{k-1}}\widetilde\Theta^T_{n,k}(\pmb t_0) d\pmb t_0\right) \mathbb E[N_n(a,b)] +O(1),
\]
which establishes the asymptotic formula \eqref{e3.22} when combined with Lemma \ref{L3.9}.
\end{proof}
\section{Asymptotic normality} \label{S4} In this section, we briefly discuss the CLTs for the number of real zeros of Gaussian processes. Drawing on the method of moments (see, for example, \cite{Bi}*{\S30}), we deduce that if $\{X_n\}$ is a sequence of random variables such that, as $n\to \infty$,
\[
s_1[X_n]\to 0,\quad s_2[X_n]\to 1,\quad \text{and}\quad s_k[X_n]\to 0 \quad (k\ge 3),
\]
then $X_n\xrightarrow{d} \mathcal N(0,1)$. Utilizing this insight for the normalized number of real zeros allows us to deduce the CLTs outlined in Sections \ref{S1.2} and \ref{S1.3}.  Since the proofs are analogous, we only present the proof of Theorem \ref{T1.5}.
\begin{proof}[Proof of Theorem \ref{T1.5}]
    Let 
\[
X_n:=\frac{N_n(a,b)-\mathbb E[N_n(a,b)]}{\sqrt{\Var[N_n(a,b)]}},\quad n\ge 1.
\]
We show that as $n\to \infty$, $X_n\xrightarrow{d} \mathcal N(0,1)$, provided that either $\alpha_n\le 0$ or $\alpha_n\to \infty$ as $n\to \infty$. 

Notably,  $s_1[X_n]=\mathbb E[X_n]=0$ and $s_2[X_n]=\Var[X_n]=1$. To complete the proof, we show that for $k\ge 3$, $s_k[X_n]\to 0$ as $n\to \infty$. Utilizing the semi-invariance property, one has
\[
s_k[X_n]=\frac{s_k[N_n(a,b)]}{(\Var[N_n(a,b)])^{k/2}}.
\]
If either $\alpha_n\le 0$ or $\alpha_n\to \infty$ as $n\to \infty$, then $\mathbb E[N_n(a,b)]\to \infty$. By Theorem \ref{T1.3}, 
\[
s_k[X_n]=\frac{\beta_k\mathbb E[N_n(a,b)]+O(1)}{(\beta_2\mathbb E[N_n(a,b)]+O(1))^{k/2}} \to 0 \quad \text{as}\quad n\to \infty,
\]
and Theorem \ref{T1.5} is proved. 
\end{proof}
\section{Asymptotics of moments and strong law of large numbers}
In this section, we use the asymptotics of cumulants to deduce corresponding results for central moments and moments, ultimately establishing the strong law of large numbers.
\subsection{Asymptotics of moments} \label{S5.1} It is well-known that we can express the $k$th central moment in terms of the first $k$ cumulants. Here, for the reader's convenience, we briefly explain how to obtain this explicit expression using Fa\`{a} di Bruno’s formula (see \cite{C}*{\S3.4}) and the exponential partial Bell polynomials (see \cite{C}*{\S3.3}), both of which are tools utilized in Section \ref{S2.3}. More precisely, consider the cumulant- and central moment-generating functions of a bounded random variable $X$ given by
\[
K(t)=\log \mathbb E[e^{tX}] \quad \text{and}\quad C(t)=\mathbb E\big[e^{t(X-\mathbb E[X])}\big],
\]
respectively. Then 
\[
s_k[X]=\frac{d^k}{dt^k}K(t)\bigg|_{t=0} \quad \text{and}\quad \mu_k[X]=\frac{d^k}{dt^k}C(t)\bigg|_{t=0}.
\]
Since $C(t)=e^{K(t)-t\mathbb E[X]}$, it follows from Fa\`{a} di Bruno’s formula that
\begin{align*}
    \mu_k[X]&=\frac{d^k}{dt^k}e^{K(t)-t\E[X]}\bigg|_{t=0}=\sum_{j=1}^kB_{k,j}(0,s_2[X],...,s_{k-j+1}[X]).
\end{align*}
Recall that, for $1\le j\le k$,
\begin{align*}
    &B_{k,j}(x_1,...,x_{k-j+1})=\sum\frac{k!}{m_1!\cdots m_{k-j+1}!}\prod_{r=1}^{k-j+1}\left(\frac{x_r}{r!}\right)^{m_r},
\end{align*}
where the sum is over all solutions in non-negative integers of the equations
\begin{align*}
    m_1+2m_2+\cdots+(k-j+1)m_{k-j+1}&=k,\\
    m_1+m_2+\cdots+m_{k-j+1}&=j.
\end{align*}
Note that $m_1\ge 1$ whenever $j>k/2$, so $B_{k,j}(0,x_2,...,x_{k-j+1})=0$ for all $j>k/2$. Therefore, for $k\ge 2$,
\begin{equation}
    \label{e5.1}
    \mu_k[X]=\sum_{j=1}^{\lfloor k/2 \rfloor}B_{k,j}(0,s_2[X],...,s_{k-j+1}[X]).
\end{equation}
When $X$ represents the number of real zeros of Gaussian processes considered in this paper, it is safe to employ \eqref{e5.1} since all mentioned quantities are finite.
\begin{proof}[Proof of Corollary \ref{C1.7}]
By \eqref{e5.1}, it holds for $k\ge 2$ that
\[
\mu_k[N_n(a,b)]=\sum_{j=1}^{\lfloor k/2 \rfloor}B_{k,j}(0,s_2[N_n(a,b)],...,s_{k-j+1}[N_n(a,b)]).
\]
Together with \eqref{e1.16}, we obtain
\begin{align*}
\mu_{2k}[N_n(a,b)]&=B_{2k,k}(0,s_2[N_n(a,b)],...,s_{k+1}[N_n(a,b)]) +O((\E[N_n(a,b)])^{k-1})\\
&=\frac{(2k)!}{k!}\left(\frac{s_2[N_n(a,b)]}{2!}\right)^k+O((\E[N_n(a,b)])^{k-1}),
\end{align*}
which yields \eqref{e1.17}. Similarly, 
\begin{align*}
\mu_{2k+1}[N_n(a,b)]&=B_{2k+1,k}(0,s_2[N_n(a,b)],...,s_{k+2}[N_n(a,b)]) +O((\E[N_n(a,b)])^{k-1})\\
&=\frac{(2k+1)!}{(k-1)!}\left(\frac{s_2[N_n(a,b)]}{2!}\right)^{k-1}\frac{s_3[N_n(a,b)]}{3!} +O((\E[N_n(a,b)])^{k-1}),
\end{align*}
which implies \eqref{e1.18}.
\end{proof}
\begin{remark}\label{r.asym.central.moments}
    Utilizing \eqref{e5.1} and Theorem \ref{T1.14}, we can derive a precise expression for the central moment $\mu_k[N_Q(R)]$ whenever $Q\in \mathcal A_k$. Consequently, if $Q\in \mathcal A_\infty$, then from Remark \ref{r.asym.cumulants}, it follows that for each positive integer $k\ge 2$, as $R\to \infty$, $\mu_k[N_Q(R)]$ admits a full asymptotic expansion of the form
    \[
    \mu_k[N_Q(R)]\sim \sum_{j=1}^{\lfloor k/2 \rfloor}B_{k,j}(0,R\theta_{2,\infty}^Q+\lambda_{2,\infty}^Q,...,R\theta_{k-j+1,\infty}^Q+\lambda_{k-j+1,\infty}^Q).
    \]
    When $k$ is even, the right-hand side is a polynomial of $R$ of degree $k/2$. When $k$ is odd, it is a polynomial of $R$ of degree at most $(k-1)/2$. 
    
    Similar conclusions apply to $N_W(R)$, $N_{Q_n}(I_n)$, and $N_{W_n}(I_n)$.
    
    We emphasize that in \cite{AL1}*{Theorem 1.6}, Ancona and Letendre investigated a broader context; nonetheless, our estimates prove more precise in specific instances.
\end{remark}
\begin{remark}\label{r.asym.moments}
    We also have an explicit expression for the $k$th moment $\mathbb E[(N_Q(R))^k]$ in terms of the first $k$ cumulants $s_j[N_Q(R)]$ for $1\le j\le k$, as follows  (see, for example, \cite{NS}*{Equation 26}):
    \[
    \mathbb E[(N_Q(R))^k]=\sum_{j=1}^k\sum_{\gamma \in \Pi(k,j)}s_{l_1}[N_Q(R)]\cdots s_{l_j}[N_Q(R)].
    \]
    Combining this with Theorem \ref{T1.14}, we obtain an exact formula for $\mathbb E[(N_Q(R))^k]$ whenever $Q\in \mathcal A_k$. Furthermore, Remark \ref{r.asym.cumulants} implies that if $Q\in A_\infty$, then, as $R\to \infty$, 
    \[
      \mathbb E[(N_Q(R))^k] \sim \sum_{j=1}^k\sum_{\gamma \in \Pi(k,j)}(R\theta_{l_1,\infty}^Q+\lambda_{l_1,\infty}^Q)\cdots (R\theta_{l_j,\infty}^Q+\lambda_{l_j,\infty}^Q),
    \]
     where the right-hand side is a polynomial of $R$ of degree $k$. Note that this full asymptotic expansion specifically applies to the Gaussian Weyl series $W$.
    
    In \cite{DV}*{Theorem 6}, Do and Vu demonstrated that
     \[\mathbb E[(N_W^\phi(R))^k]\le C_{\phi,k} R^k.\]
    Choosing $\phi=\pmb 1_{[0,R]}$ implies 
    \[\mathbb E[(N_W(R))^k]\le C_k R^k.\]
    In this particular case, our estimate is more precise.
\end{remark}
\subsection{Strong law of large numbers} \label{S5.2} In this subsection, we aim to establish a strong law of large numbers for the number of real zeros using asymptotics of central moments, reinforced by a Borel-Cantelli type argument. Only the proof for Theorem \ref{T1.9} is provided here, as the proofs for other results follow a similar approach.
\begin{proof}[Proof of Theorem \ref{T1.9}]
The given assumption implies
\[
\sum_{n=1}^\infty \frac{1}{(\mathbb E[N_n(a,b)])^k}<\infty.
\]
By Corollary \ref{C1.7}, this leads to
\begin{align*}
    \mathbb E\left[\sum_{n=1}^\infty\left(\frac{N_n(a,b)}{\mathbb E[N_n(a,b)]}-1\right)^{2k}\right]&=\sum_{n=1}^\infty \frac{\mu_{2k}[N_n(a,b)]}{(\mathbb E[N_n(a,b)])^{2k}}=O\left(\sum_{n=1}^\infty \frac{1}{(\mathbb E[N_n(a,b)])^k}\right)<\infty.
\end{align*}
Therefore, almost surely,
\[
\sum_{n=1}^\infty\left(\frac{N_n(a,b)}{\mathbb E[N_n(a,b)]}-1\right)^{2k}<\infty,
\]
yielding 
\[
\frac{N_n(a,b)}{\E[N_n(a,b)]}-1 \xrightarrow{a.s.} 0 \quad \text{as } n\to \infty,
\]
and the theorem is proved.
\end{proof}
\acks The author extends deep gratitude to his esteemed PhD advisor, Yen Do, for invaluable guidance and suggestions throughout the preparation of this paper. Appreciation is also expressed to the anonymous reviewers whose feedback significantly enhanced the paper's quality.

\end{document}